\numberwithin{equation}{section}
\newtheorem{theorem}{Theorem}[section]
\newtheorem{lemma}[theorem]{Lemma}
\newcommand\Item[1][]{%
  \ifx\relax#1\relax  \item \else \item[#1] \fi
  \abovedisplayskip=0pt\abovedisplayshortskip=0pt~\vspace*{-\baselineskip}}
\theoremstyle{definition}
\newtheorem{defn}[theorem]{Definition}
\theoremstyle{definition}
\newtheorem{remark}[theorem]{Remark}
\DeclareMathOperator{\Prob}{\mathbf{P}}
\DeclareMathOperator{\E}{\mathbf{E}}
\DeclareMathOperator{\tv}{TV}
\title[]{Phase transition in percolation games on rooted Galton-Watson trees}
\date{}
\author{Sayar Karmakar, Moumanti Podder, Souvik Roy, Soumyarup Sadhukhan}
\address{Sayar Karmakar, University of Florida, 230 Newell Drive, Gainesville, Florida 32605, USA.}
\address{Moumanti Podder, Indian Institute of Science Education and Research (IISER) Pune, Dr.\ Homi Bhabha Road, Pashan, Pune 411008, Maharashtra, India.}
\address{Souvik Roy, Indian Statistical Institute, 203 Barrackpore Trunk Road, Kolkata 700108, West Bengal, India.}
\address{Soumyarup Sadhukhan, Indian Institute of Technology, Kalyanpur, Kanpur, Uttar Pradesh 208016, India.}
\email{sayarkarmakar@ufl.edu}
\email{moumanti@iiserpune.ac.in}
\email{souvik.2004@gmail.com}
\email{soumyarup.sadhukhan@gmail.com}
\begin{document}
\bibliographystyle{plainnat}
\begin{abstract}
We study the \emph{bond percolation game} and the \emph{site percolation game} on the rooted Galton-Watson tree $T_{\chi}$ with offspring distribution $\chi$. We obtain the probabilities of win, loss and draw for each player in terms of the fixed points of functions that involve the probability generating function $G$ of $\chi$, and the parameters $p$ and $q$. Here, $p$ is the probability with which each edge (respectively vertex) of $T_{\chi}$ is labeled a trap in the bond (respectively site) percolation game, and $q$ is the probability with which each edge (respectively vertex) of $T_{\chi}$ is labeled a target in the bond (respectively site) percolation game. We obtain a necessary and sufficient condition for the probability of draw to be $0$ in each game, and we examine how this condition simplifies to yield very precise phase transition results when $\chi$ is Binomial$(d,\pi)$, Poisson$(\lambda)$, or Negative Binomial$(r,\pi)$, or when $\chi$ is supported on $\{0,d\}$ for some $d \in \mathbb{N}$, $d \geqslant 2$. It is fascinating to note that, while all other specific classes of offspring distributions we consider in this paper exhibit phase transition phenomena as the parameter-pair $(p,q)$ varies, the probability that the bond percolation game results in a draw remains $0$ for \emph{all} values of $(p,q)$ when $\chi$ is Geometric$(\pi)$, for all $0 < \pi \leqslant 1$. By establishing a connection between these games and certain \emph{finite state probabilistic tree automata} on rooted $d$-regular trees, we obtain a precise description of the regime (in terms of $p$, $q$ and $d$) in which these automata exhibit \emph{ergodicity} or \emph{weak spatial mixing}.
\end{abstract}

\subjclass[2020]{05C57, 37B15, 37A25, 68Q80}

\keywords{percolation games on rooted random trees; two-player combinatorial games; rooted Galton-Watson trees; probabilistic tree automata; ergodicity; probability of draw; weak spatial mixing; phase transition; rooted regular trees; bond percolation; site percolation}

\maketitle

\section{Introduction}\label{sec:intro} 
\subsection{Overview of the paper}\label{subsec:overview} The work accomplished in this paper is the result of our curiosity regarding \emph{percolation games} on rooted trees, both deterministic and random. Whereas \emph{percolation} itself has been thoroughly researched for decades by both mathematicians and physicists (see, for instance, \cite{bollobas2006percolation}, \cite{grimmett1999percolation} and \cite{stauffer2018introduction}), the notion of percolation games -- more specifically, \emph{node percolation games} or \emph{site percolation games} -- to the best of our knowledge, was introduced in \cite{holroyd2019percolation}. In this paper, we address both site percolation games and \emph{bond percolation games}. The latter is analogous, in some sense, to the former, but with the edges of the graph under consideration replacing the vertices in their roles in the game (see \S\ref{subsec:games} for a detailed description of each game). 

In this paper, each of these games is played, and the probabilities of their various outcomes analyzed, on a \emph{rooted Galton-Watson} (henceforth abbreviated as GW) tree $T_{\chi}$, with root $\phi$ and offspring distribution $\chi$. We impose no assumption on $\chi$ whatsoever, except that the probability of the event that $\phi$ has no child is strictly less than $1$. We assign, to each edge (respectively, each vertex) of $T_{\chi}$, independently, a label that reads \emph{trap} with probability $p$, \emph{target} with probability $q$, and \emph{safe} with probability $1-p-q$, where $0 < p+q < 1$, following which two players take turns to move a token along the edges of $T_{\chi}$ (always moving \emph{away} from the root). A player wins the bond percolation game (respectively, the site percolation game) if she is able to move the token \emph{along} an edge (respectively, \emph{to} a vertex) that has been labeleed a target, or force her opponent to move the token along an edge (respectively, to a vertex) that has been labeled a trap. We deduce a necessary and sufficient condition for the probability of draw to be $0$ in each of these games when they are played on $T_{\chi}$ for \emph{any} $\chi$, following which we consider specific classes of probability distributions in place of $\chi$. This allows us to gain further insight into the \emph{regimes}, in terms of the underlying parameters, in which neither of the games results in a draw with positive probability.    

The two primary sources of inspiration for this paper are \cite{holroyd2019percolation} and \cite{holroyd2021galton}. In the former, each site $(x,y) \in \mathbb{Z}^{2}$ of the $2$-dimensional square lattice is assigned, independently, a label that reads \emph{trap} with probability $p$, \emph{target} with probability $q$ and \emph{open} with probability $1-p-q$, where $p$ and $q$ are pre-fixed parameters such that $0 \leqslant \min\{p, q\} < p+q \leqslant 1$. Two players take turns to make moves, where a move involves relocating a token from its current position, say the site $(x,y)$, to one of the sites $(x+1,y)$ and $(x,y+1)$. A player wins if she either succeeds in moving the token to a site that has been labeled a target, or forces her opponent to move the token to a site that has been labeled a trap. The game continues for as long as the token stays on open sites, and this may happen indefinitely, thereby leading to a draw. In \cite{holroyd2019percolation}, it is shown, exploiting the connection between this site percolation game and a suitably defined \emph{probabilistic cellular automaton} (henceforth abbreviated as PCA), and using the technique of \emph{weight functions} or \emph{potential functions}, that the probability of the event that this game results in a draw is $0$ (and equivalently, the PCA is \emph{ergodic}) whenever $p+q > 0$.

In \cite{holroyd2021galton}, \emph{normal}, \emph{mis\`{e}re} and \emph{escape} games are considered on the rooted GW tree $T_{\chi}$. In each of these games, two players take turns to slide a token along the edges of $T_{\chi}$, always moving \emph{from} the parent vertex \emph{to} the child vertex. A player loses the normal game if she is unable to make a move (which happens when the token is stagnated at a leaf vertex of $T_{\chi}$). A player loses the mis\`{e}re game if her opponent is unable to make a move. In the escape game, the player titled \emph{Stopper} wins if \emph{either} of the two players is unable to make a move, else her opponent, titled \emph{Escaper}, wins. The probabilities of win and loss (and draw, in case of the normal and mis\`{e}re games) for each player are derived in terms of the fixed points of certain functions that involve the \emph{probability generating function} $G$ of the offspring distribution $\chi$.

\subsection{A formal introduction to the games}\label{subsec:games} In this paper, we study the site percolation game (inspired by the version studied in \cite{holroyd2019percolation}, as described above) and the bond percolation game on the rooted GW tree $T_{\chi}$ with offspring distribution $\chi$. We assume that $\chi$ is supported on $\mathbb{N}_{0}$ (which ensures that $\chi$ takes only finite, non-negative integer values), and that $0 \leqslant \chi(0) < 1$ (here, $\chi(m)$ denotes the probability that a random variable, that follows the distribution $\chi$, takes the value $m$, for any $m \in \mathbb{N}_{0}$). To begin with, we recall for the reader that $T_{\chi}$ is generated as follows. The root $\phi$ is allowed to have a random number $X$ of children according to the law $\chi$. If $X$ takes the value $0$, the process stops and $T_{\chi}$ comprises only the root $\phi$. Otherwise, conditioned on $X=m$ for any $m \in \mathbb{N}$ that belongs to the support of $\chi$, we name the children of the root $u_{1}, u_{2}, \ldots, u_{m}$. We let $u_{i}$ have $X_{i}$ children for $1 \leqslant i \leqslant m$, where $X_{1}, X_{2}, \ldots, X_{m}$ are independent and identically distributed (i.i.d.)\ $\chi$. We continue thus, and the rooted random tree thus generated is denoted by $T_{\chi}$. It has a positive probability of surviving, i.e.\ of being infinite, if and only if the expectation of $\chi$ is strictly greater than $1$. For an overview of the GW branching process and its many applications, we refer the reader to \cite{athreya2004branching}.  

We visualize $T_{\chi}$ as a directed graph, as follows: given any edge $\{u,v\}$ of $T_{\chi}$, connecting vertices $u$ and $v$ where $u$ is the parent of $v$, we let $(u,v)$ denote the directed edge that travels \emph{from} $u$ \emph{towards} $v$. Next, we choose and fix a pair $(p,q)$ of parameters from the set 
\begin{equation}
I = \left\{(p,q) \in [0,1]^{2}: 0 < p+q < 1\right\}.\label{I_defn}
\end{equation}
Each game involves a token, and two players that we henceforth refer to as $A$ and $B$. In each game, $A$ and $B$ take turns to make moves, where a move involves relocating the token along a directed edge of $T_{\chi}$ (i.e.\ if the token is currently situated at some vertex $u$ of $T_{\chi}$, the player whose turn it is to move next is permitted to move the token to any child of $u$, if such a child exists).
\begin{enumerate}
\item In case of the bond percolation game, each edge of $T_{\chi}$ is assigned, independently, a label that reads \emph{trap} with probability $p$, \emph{target} with probability $q$, and \emph{safe} with the remaining probability $1-p-q$. A player wins this game if she is able to move the token along a directed edge that has been labeled a target, or force her opponent to move the token along a directed edge that has been labeled a trap, or confine her opponent to a leaf vertex (from where her opponent cannot move the token any further).
\item In case of the site percolation game, each vertex of $T_{\chi}$ is assigned, independently, a label that reads trap with probability $p$, target with probability $q$, and safe with the remaining probability $1-p-q$. A player wins this game if she is able to move the token to a vertex that has been labeled a target, or force her opponent to move the token to a vertex that has been labeled a trap or that is a leaf vertex that has been labeled safe.
\end{enumerate}
Once a realization of $T_{\chi}$, followed by a realization of the random assignment of trap / target / safe labels (be it to the edges of this realization of $T_{\chi}$ or to the vertices), has been generated, it is revealed in its entirety to both $A$ and $B$ -- this is what makes each of these games a \emph{perfect information} game. A game that continues for eternity is said to result in a draw. In case of the bond percolation game, this happens if the token traverses \emph{only} along edges that have been labeled safe, whereas in case of the site percolation game, this happens if the token \emph{only ever} visits vertices that have been labeled safe. We assume both players to play \emph{optimally}, i.e.\ if the game is destined to end in a finite number of rounds, then the player who wins tries to wrap up the game as quickly as possible, whereas the player who loses tries to prolong its duration as much as possible.

\subsection{The main results of this paper}\label{subsec:main_results} For each of the two games described above, we investigate the probabilities of the various possible outcomes, i.e.\ win for player $A$, win for player $B$, and draw for both the players. Of particular interest is the phenomenon of \emph{phase transition} pertaining to the probability of draw: in certain \emph{regimes}, i.e.\ for certain values of the parameter-pair $(p,q)$ (that depend on the offspring distribution $\chi$ under consideration), the probability of the event that the game results in a draw is $0$, while for other regimes, this is strictly positive. We find a necessary and sufficient condition for the probability of draw to be $0$, then examine what this condition reduces to when we consider specific classes of probability distributions in place of the offspring distribution $\chi$.

In what follows, we let $G$ denote the probability generating function of $\chi$, i.e.\ $G(x) = \sum_{m=0}^{\infty}x^{m}\chi(m)$ for all $x \in [0,1]$. We define the function $g_{p,q}: [0,1] \rightarrow [0,1]$ as
\begin{align}
g_{p,q}(x) = (1-q) - (1-p-q)G(x), \text{ for } x \in [0,1].\label{GW_g_defn}
\end{align}
Henceforth, given any function $f: S \rightarrow S$, where $S$ is a subset of the real line $\mathbb{R}$, we let $f^{(n)}$ denote the $n$-fold composition of $f$ with itself, for each $n \in \mathbb{N}$, $n \geqslant 2$. In what follows, we consider the fixed points of the function $g_{p,q}^{(2)}$ in the interval $[0,1]$. We note that, since $0 \leqslant g_{p,q}^{(2)}(x) \leqslant 1$ for each $x \in [0,1]$, hence $g_{p,q}^{(2)}(x)-x$ is non-negative at $x=0$ and non-positive at $x=1$, guaranteeing the existence of at least one fixed point in $[0,1]$ by the intermediate value theorem (since $g_{p,q}^{(2)}$ is a continuous function). Yet another way to argue that at least one fixed point of $g_{p,q}^{(2)}$ exists in $[0,1]$ is to employ Brouwer's fixed-point theorem, which states that every continuous function from a convex compact subset of a Euclidean space to itself has a fixed point. 

\begin{theorem}\label{thm:main_GW_bond}
Let $w = w(p,q)$ denote the probability of the event that the bond percolation game starting with the token at the root $\phi$ of $T_{\chi}$ is won by the player who plays the first round, and let $\ell = \ell(p,q)$ denote the probability that this game is lost by the player who plays the first round. Then 
\begin{enumerate}
\item \label{main_GW_bond_1} $w' = p + (1-p-q)w$ equals the minimum positive fixed point of the function $g_{p,q}^{(2)}$, where $g_{p,q}$ is as defined in \eqref{GW_g_defn}, 
\item \label{main_GW_bond_2} $\ell' = (1-p-q)\ell$ equals $(1-q) - g_{p,q}(w')$, and
\item \label{main_GW_bond_3} and the probability that this game results in a draw is $0$ if and only if the function $g_{p,q}^{(2)}$ has a \emph{unique} fixed point in $[0,1]$.
\end{enumerate}
\end{theorem}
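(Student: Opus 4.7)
\emph{Setting up the fixed-point equation.} The starting point is the standard recursive decomposition of $T_{\chi}$. Conditioning on $\phi$ having $m$ children $u_{1},\ldots,u_{m}$, the subtrees rooted at these children are i.i.d.\ copies of $T_{\chi}$, independent of all edge labels. The first player loses from $\phi$ if $m=0$ (she is confined to the leaf) or, for $m \geqslant 1$, if every edge $(\phi, u_{i})$ is either a trap or safe with the opponent subsequently winning from $u_{i}$; dually, she wins if some edge $(\phi, u_{i})$ is either a target or safe with the opponent subsequently losing. Summing over $m$ against $\chi$ converts these into
\begin{align*}
\ell = G(w'), \qquad w = 1 - G\bigl((1-q) - (1-p-q)\ell\bigr), \qquad w' := p + (1-p-q)w.
\end{align*}
Eliminating $\ell$ and rewriting in the $w'$-variable produces $w' = g_{p,q}^{(2)}(w')$, so $w'$ is a fixed point of $g_{p,q}^{(2)}$. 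Substituting $\ell = G(w')$ back simultaneously gives $\ell' = (1-p-q)\ell = (1-p-q)G(w') = (1-q) - g_{p,q}(w')$, which is part (ii).

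\emph{Minimality of $w'$.} To single out $w'$ as the \emph{smallest} positive fixed point of $g_{p,q}^{(2)}$, I would introduce horizon-truncated versions of the game by declaring a draw whenever no winner emerges within $n$ moves, and let $w_{n}, \ell_{n}$ denote the corresponding win and loss probabilities for the to-move player at $\phi$. Running the same conditioning argument at the truncated level, together with the substitution $W'_{n} := p + (1-p-q)w_{n}$, yields $W'_{0}=p$, $W'_{1} = g_{p,q}^{(2)}(0)$, and the two-step recursion $W'_{n+2} = g_{p,q}^{(2)}(W'_{n})$. Because $G$ is non-decreasing, $g_{p,q}$ is non-increasing and $g_{p,q}^{(2)}$ is non-decreasing; a monotonicity induction shows both $\{w_{n}\}$ and $\{\ell_{n}\}$ increase in $n$, and an optimal-play/continuity-of-measure argument then gives $w_{n}\uparrow w$ and $\ell_{n}\uparrow \ell$. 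Monotonicity of $g_{p,q}^{(2)}$ forces every fixed point $y \in [0,1]$ to satisfy $y = g_{p,q}^{(2)}(y) \geqslant g_{p,q}^{(2)}(0) > 0$, so the two subsequences $\{W'_{2k}\}$ and $\{W'_{2k+1}\}$ both climb monotonically to the (automatically positive) minimum fixed point; continuity of $g_{p,q}^{(2)}$ identifies this common limit with $w'$, proving (i).

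\emph{The draw criterion.} For (iii), a short algebraic manipulation using $\Prob(\mathrm{draw}) = 1 - w - \ell$, $w = (w'-p)/(1-p-q)$, and $\ell = G(w')$ gives
\begin{align*}
(1-p-q)\,\Prob(\mathrm{draw}) = g_{p,q}(w') - w',
\end{align*}
so the draw probability vanishes precisely when $w'$ is itself a fixed point of $g_{p,q}$. Now $g_{p,q}$ is continuous and \emph{strictly} decreasing on $[0,1]$ (since $\chi(0) < 1$ forces $G$ to be strictly increasing), hence possesses a unique fixed point $\xi$, which is automatically a fixed point of $g_{p,q}^{(2)}$. Any further fixed point of $g_{p,q}^{(2)}$ must appear as part of a genuine $2$-cycle $\{x_{1}, x_{2}\}$ with $x_{1}<\xi<x_{2}$, $g_{p,q}(x_{1})=x_{2}$, and $g_{p,q}(x_{2})=x_{1}$. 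If no such cycle exists, $\xi$ is the \emph{unique} fixed point of $g_{p,q}^{(2)}$, $w' = \xi$, and $\Prob(\mathrm{draw}) = 0$; if one does, the minimum positive fixed point is $w' = x_{1}$, so $g_{p,q}(w') = x_{2} > w'$ and $\Prob(\mathrm{draw}) > 0$. This completes (iii).

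I expect the principal technical obstacle to lie in rigorously justifying $w_{n} \uparrow w$ and $\ell_{n} \uparrow \ell$: although intuitively transparent, it demands a careful argument on a \emph{random} tree with random labels to show that no winning or losing probability is lost by restricting to strategies that terminate within finitely many rounds, and that this restriction meshes cleanly with the recursive decomposition of $T_{\chi}$ that underpins the fixed-point equation.
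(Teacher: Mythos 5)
Your proposal is correct and follows essentially the same route as the paper: the same recursive decomposition yielding $\ell = G(w')$ and $w' = g_{p,q}^{(2)}(w')$, the same truncation-plus-monotone-iteration argument (every fixed point $y$ satisfies $y \geqslant g_{p,q}^{(2)}(0) \geqslant p = W'_{0}$, so iterating the increasing map $g_{p,q}^{(2)}$ dominates $W'_{2k}$ and gives minimality of $w'$), and the same characterization of extra fixed points of $g_{p,q}^{(2)}$ as $2$-cycles of $g_{p,q}$ straddling its unique fixed point, which is exactly the content of the paper's Lemma~\ref{lem:draw_probab_0_only_if_unique_fixed_point}. The only slip is cosmetic --- $W'_{1} = p + (1-p-q)w_{1} = g_{p,q}(1-q)$, which coincides with $g_{p,q}^{(2)}(0)$ only when $\chi(0) = 0$, but the odd subsequence plays no role in the minimality argument --- and the convergence $w_{n} \uparrow w$, $\ell_{n} \uparrow \ell$ that you flag as the main technical obstacle is precisely what the paper's Lemma~\ref{lem:compactness} supplies (itself quoted without proof from prior work).
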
  

\begin{theorem}\label{thm:main_GW_site}
Let $\widetilde{w} = \widetilde{w}(p,q)$ denote the probability of the event that the site percolation game starting with the token at the root $\phi$ of $T_{\chi}$ is won by the player who plays the first round, and $\widetilde{\ell} = \widetilde{\ell}(p,q)$ the probability that this game is lost by the player who plays the first round. Then 
\begin{enumerate}
\item \label{main_GW_site_1} $\widetilde{w}$ equals the minimum positive fixed point of the function $g_{p,q}^{(2)}$, where $g_{p,q}$ is as defined in \eqref{GW_g_defn}, 
\item \label{main_GW_site_2} $\widetilde{\ell} = q + (1-p-q)G(\widetilde{w})$, and
\item \label{main_GW_site_3} the probability that this game results in a draw is $0$ if and only if the function $g_{p,q}^{(2)}$ has a \emph{unique} fixed point in $[0,1]$.
\end{enumerate} 
\end{theorem}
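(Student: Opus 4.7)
My plan is to mirror the strategy used for the bond version (Theorem~\ref{thm:main_GW_bond}): condition on the branching structure of $T_\chi$ to derive coupled recursive equations for $\widetilde{w}$ and $\widetilde{\ell}$, and then identify the solutions with fixed points of $g_{p,q}^{(2)}$.

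Concretely, I would condition on the label of the root and on the number of children $M \sim \chi$ of the root. Since the subtrees rooted at the children of $\phi$ are i.i.d.\ copies of $T_\chi$ with fresh independent labels, one can express the probability that moving the token to a given child $u$ is a winning, losing, or drawing move for the first mover purely in terms of $\widetilde{w}$, $\widetilde{\ell}$, and $\widetilde{d} = 1 - \widetilde{w} - \widetilde{\ell}$. Since both players play optimally, the first mover wins iff at least one child offers a winning move, and loses iff every child offers a losing move; averaging over $\chi$ using $G(x) = \sum_m \chi(m) x^m$ produces the identity $\widetilde{\ell} = q + (1-p-q) G(\widetilde{w})$ (establishing part (ii)) and, after substitution, the fixed-point equation $\widetilde{w} = g_{p,q}^{(2)}(\widetilde{w})$ (establishing the ``fixed point'' half of part (i)).

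To upgrade this to ``$\widetilde{w}$ is the \emph{minimum positive} fixed point,'' I would introduce the truncated probabilities $W_n$ and $L_n$, equal to the probabilities that the first mover wins, respectively loses, within the first $n$ rounds. The same branching recursion yields $W_n = g_{p,q}(1 - L_{n-1})$ and $1 - L_n = g_{p,q}(W_{n-1})$, whence $W_n = g_{p,q}^{(2)}(W_{n-2})$. Since $G$ is increasing, $g_{p,q}$ is decreasing on $[0,1]$ and in fact maps $[0,1]$ into $[p, 1-q]$, so $g_{p,q}^{(2)}$ is increasing and every fixed point of $g_{p,q}^{(2)}$ lies in $[p,1-q]$; iteration of $g_{p,q}^{(2)}$ from $0$ is therefore monotone non-decreasing and converges to the smallest fixed point of $g_{p,q}^{(2)}$ in $[0,1]$, which is the minimum positive fixed point. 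Monotone convergence on the probabilistic side gives $W_n \uparrow \widetilde{w}$, identifying $\widetilde{w}$ with this minimum.

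For part (iii), running the dual iteration---tracking $N_n := 1 - L_n$, initialized at $N_0 = 1-q$---yields a monotone non-increasing sequence iterating $g_{p,q}^{(2)}$ from $1-q$ and converging to the \emph{largest} fixed point of $g_{p,q}^{(2)}$ in $[0,1]$, which equals $\widetilde{w} + \widetilde{d}$. The draw probability $\widetilde{d}$ is therefore exactly the gap between the largest and smallest fixed points of $g_{p,q}^{(2)}$ in $[0,1]$, and vanishes iff these coincide, i.e., iff $g_{p,q}^{(2)}$ has a unique fixed point in $[0,1]$. The main obstacle will be the careful bookkeeping that aligns the coupled truncated-game recursion (which intertwines $W_n$ with $L_n$) to iterates of the single map $g_{p,q}^{(2)}$: the decoupling only happens after two rounds of play, matching the composition $g_{p,q} \circ g_{p,q}$, so one must handle the parity of $n$ and the correct starting values with care, ensuring in particular that the site-specific features of how labels attach to vertices (as opposed to edges, in the bond setting) are absorbed consistently into the initial conditions of the recursion.
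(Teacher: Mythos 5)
Your proposal is correct, and for parts \eqref{main_GW_site_1} and \eqref{main_GW_site_2} it is essentially the paper's own argument: the paper derives exactly your coupled recursions $\widetilde{w}_{n} = g_{p,q}\bigl(1-\widetilde{\ell}_{n-1}\bigr)$ and $1-\widetilde{\ell}_{n} = g_{p,q}\bigl(\widetilde{w}_{n-1}\bigr)$ (its equations \eqref{site_recurrence_1}--\eqref{site_recurrence_2}), decouples them into $\widetilde{w}_{n+1} = g_{p,q}^{(2)}(\widetilde{w}_{n-1})$, and uses monotone iteration from $0$ together with the compactness lemma $\widetilde{w}_n \uparrow \widetilde{w}$ to identify $\widetilde{w}$ with the minimum positive fixed point.

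Where you genuinely diverge is part \eqref{main_GW_site_3}. The paper observes that the draw probability vanishes iff $\widetilde{w}+\widetilde{\ell}=1$, i.e.\ iff $\widetilde{w}=g_{p,q}(\widetilde{w})=\alpha_{p,q}$, and then invokes Lemma~\ref{lem:draw_probab_0_only_if_unique_fixed_point}, whose proof is a contradiction argument using the fact that $g_{p,q}$ maps fixed points of $g_{p,q}^{(2)}$ to fixed points of $g_{p,q}^{(2)}$ and reverses their order. You instead run the dual iteration $N_n = 1-\widetilde{\ell}_n = g_{p,q}^{(2)}(N_{n-2})$ downward from a point dominating every fixed point, identify $1-\widetilde{\ell}$ with the \emph{largest} fixed point of $g_{p,q}^{(2)}$ in $[0,1]$, and conclude that the draw probability equals the gap between the largest and smallest fixed points. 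This is valid (every fixed point $\gamma$ satisfies $\gamma = g_{p,q}^{(2)}(\gamma) \leqslant g_{p,q}^{(2)}(1) \leqslant 1-q$, so the decreasing iterates sandwich all fixed points from above, and both parity subsequences of $N_n$ converge to $1-\widetilde{\ell}$), and it buys slightly more than the paper's route: an explicit identity for the draw probability rather than just a vanishing criterion. Two small bookkeeping points: with the paper's initialization $\widetilde{\ell}_0=0$ the dual iteration starts at $N_0=1$, not $1-q$ (your value is $N_1 = 1-\widetilde{\ell}_1$); either starting point works since both dominate all fixed points. Also note the paper offers a second, independent proof via an explicit coupling of the site game with the bond game (transferring each edge's label to its lower endpoint), which you do not use; your direct route is self-contained and does not need it.
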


It is evident, from \eqref{main_GW_bond_3} of Theorem~\ref{thm:main_GW_bond} and \eqref{main_GW_site_3} of Theorem~\ref{thm:main_GW_site}, that given \emph{any} offspring distribution $\chi$, the values of the parameter-pair $(p,q)$ for which the probability of draw in the bond percolation game is $0$ are \emph{precisely} the values of $(p,q)$ for which the probability of draw in the site percolation game is also $0$. Therefore, in the theorems that follow, we only describe the regimes (i.e.\ the sets of values of $(p,q)$, governed by the underlying parameter(s) of the specific classes of offspring distributions that we consider) in which the probability of draw in the bond percolation game is $0$. 

\begin{theorem}\label{thm:binomial_bond}
When played on $T_{\chi}$ where $\chi$ is the Binomial$(d,\pi)$ distribution, for any $0 < \pi \leqslant 1$ and any $d \in \mathbb{N}$ with $d \geqslant 2$, the probability that the bond percolation game results in a draw is $0$ if and only if $(1-p-q) \pi (1-\pi q)^{d-1} \leqslant (d+1)^{d-1}d^{-d}$.
\end{theorem}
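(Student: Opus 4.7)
The plan is to apply Theorem~\ref{thm:main_GW_bond}, part \eqref{main_GW_bond_3}, which reduces the assertion to a purely deterministic question: for $\chi$ equal to Binomial$(d,\pi)$, characterize the set of $(p,q)$ for which $g_{p,q}^{(2)}$ admits a unique fixed point in $[0,1]$. Writing $c := 1-p-q$ and $K := 1-\pi q$, we have $g_{p,q}(x) = (1-q) - c(1-\pi + \pi x)^d$, which is $C^{\infty}$, strictly decreasing, and (since $d \geqslant 2$) strictly concave on $[0,1]$; its unique fixed point $x^\ast \in (0,1)$ is automatically a fixed point of $g_{p,q}^{(2)}$, so the question becomes whether $g_{p,q}$ admits any period-$2$ orbit. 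It will be convenient to make the affine change of variables $z = 1-\pi+\pi x$, which conjugates $g_{p,q}$ to $h(z) := K - \pi c\, z^d$ on $[1-\pi, 1]$ and preserves the orbit structure; I will work with $h$ and write $z^\ast := 1-\pi+\pi x^\ast$ for the transported fixed point.

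The key analytic claim is that $h$ admits a period-$2$ orbit if and only if $\lambda := -h'(z^\ast) = \pi c d\,(z^\ast)^{d-1}$ is strictly greater than $1$. The direction $\lambda > 1 \Rightarrow$ period-$2$ is the easier half: setting $F(z) := h(h(z)) - z$, one has $F(z^\ast) = 0$, $F'(z^\ast) = \lambda^2 - 1 > 0$, and a short direct check yields $F(1-\pi) > 0$ (verify $h(1-\pi) \in (1-\pi, 1]$ and $h(1) \geqslant 1-\pi$, so $h(h(1-\pi)) > 1-\pi$); hence $F$ must vanish somewhere in $(1-\pi, z^\ast)$ by the intermediate value theorem, yielding a period-$2$ orbit. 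The converse, $\lambda \leqslant 1 \Rightarrow$ no period-$2$, is the main technical obstacle. Near $z^\ast$, a third-order Taylor expansion shows that at the critical value $\lambda = 1$ the map $h \circ h$ has vanishing second derivative and a strictly negative third derivative $-(d-1)(d+1)/(z^\ast)^2$ at $z^\ast$, so the period-doubling bifurcation is supercritical and no period-$2$ orbit emerges on the $\lambda \leqslant 1$ side in a neighborhood of $z^\ast$. To rule out period-$2$ orbits born far from $z^\ast$, one computes the Schwarzian derivative of $h$ to be $-(d-1)(d+1)/(2z^2)$, strictly negative on $[1-\pi, 1]$, and invokes Singer's theorem to conclude that the attracting fixed point $z^\ast$ (attracting whenever $\lambda < 1$) has the entire interval as its basin, so no coexisting periodic orbit is possible; the boundary case $\lambda = 1$ follows by continuity in $c$.

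It remains to convert the condition $\lambda \leqslant 1$ into the theorem's inequality. At the critical boundary, simultaneously imposing $h(z^\ast) = z^\ast$ and $h'(z^\ast) = -1$ yields, after a short linear manipulation, $z^\ast = dK/(d+1)$, and substituting back into $\lambda = 1$ gives $(1-p-q)\pi(1-\pi q)^{d-1} = (d+1)^{d-1}/d^d$. To promote this pointwise identification of the critical curve to an equivalence of regions, fix $(q,\pi,d)$ and parameterize by $c \in (0, 1-q)$. Implicit differentiation of the fixed point equation yields $dz^\ast/dc = -\pi(z^\ast)^d/(1+\lambda) < 0$, from which a brief calculation gives $d\lambda/dc = \pi(z^\ast)^{d-1}(d+\lambda)/(1+\lambda) > 0$. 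Hence both $c \mapsto \lambda(c)$ and the linear function $c \mapsto c\pi(1-\pi q)^{d-1}$ are strictly increasing in $c$, and since they cross their respective thresholds at the same value of $c$, the two inequalities cut out the same subinterval in $c$ for each $q$, and hence the same subset of the parameter region $I$. Combining this with the reduction in the first paragraph completes the proof.
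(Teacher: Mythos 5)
Your reduction via part \eqref{main_GW_bond_3} of Theorem~\ref{thm:main_GW_bond} is the same as the paper's, but from there your route genuinely diverges. The paper controls the derivative of $g_{p,q}^{(2)}$ \emph{globally}: it locates the maximizer $x_{p,q}$ of $\frac{d}{dx}g_{p,q}^{(2)}(x)$, shows that the maximum is $\leqslant 1$ exactly on the claimed region (which gives uniqueness of the fixed point via Lemma~\ref{lem:max_derivative_leq_1}), and then, for the converse, proves the two ad hoc inequalities $g_{p,q}^{(2)}(x_{p,q}) \leqslant x_{p,q}$ and $g_{p,q}(x_{p,q}) > x_{p,q}$ to sandwich $w' \leqslant x_{p,q} < \alpha_{p,q}$ and exhibit two distinct fixed points. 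You instead localize everything at the unique fixed point $\alpha_{p,q}$ of $g_{p,q}$ (your $z^{\ast}$ after the affine conjugation, which is correct and does give $h(z)=K-\pi c z^{d}$ on $[1-\pi,1]$), reformulate non-uniqueness of fixed points of $g_{p,q}^{(2)}$ as existence of a period-$2$ orbit, and claim the dichotomy is governed by the multiplier $\lambda=|g_{p,q}'(\alpha_{p,q})|$. Your algebra is all correct: the forward direction ($\lambda>1$ forces a period-$2$ orbit via the sign change of $h\circ h-\mathrm{id}$), the identification of the critical curve ($z^{\ast}=dK/(d+1)$ and $c\pi K^{d-1}=(d+1)^{d-1}d^{-d}$ when $\lambda=1$), and the monotonicity in $c$ of both $\lambda$ and $c\pi(1-\pi q)^{d-1}$ all check out. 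Where your condition and the paper's agree is not an accident, but your version is conceptually cleaner and avoids the paper's two auxiliary optimizations.

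The gap is in the step ``$\lambda\leqslant 1$ implies no period-$2$ orbit.'' Singer's theorem does not say that an attracting fixed point of a negative-Schwarzian map has the whole interval as its basin; it only says that the immediate basin of each \emph{attracting} periodic orbit contains a critical point or an endpoint. A hypothetical period-$2$ orbit $z_{1}<z^{\ast}<z_{2}$ coexisting with an attracting $z^{\ast}$ would necessarily have multiplier $\mu=h'(z_{1})h'(z_{2})\geqslant 1$ (since $H:=h\circ h$ satisfies $H>\mathrm{id}$ on $(z_{1},z^{\ast})$ and $H<\mathrm{id}$ on $(z^{\ast},z_{2})$ when $H'(z^{\ast})<1$), i.e.\ it would be repelling or neutral, and Singer's theorem says nothing about such orbits. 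Your own Schwarzian computation does close the gap, but via the \emph{minimum principle} rather than Singer: $SH<0$ and $H'>0$, and if $z_{1}<z^{\ast}<z_{2}$ were fixed points of $H$ then the mean value theorem gives $c_{1}\in(z_{1},z^{\ast})$ and $c_{2}\in(z^{\ast},z_{2})$ with $H'(c_{1})=H'(c_{2})=1$, whence the minimum principle forces $H'(z^{\ast})>1$, contradicting $H'(z^{\ast})=\lambda^{2}\leqslant 1$. Note that this argument also disposes of the boundary case $\lambda=1$ directly; your ``by continuity in $c$'' does not, since absence of a period-$2$ orbit is not a priori a closed condition in the parameter. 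Two small loose ends to state explicitly: when $\pi=1$ the point $z=0$ is a critical point of $h$ lying in the domain, so one should restrict the minimum-principle argument to $[h(1),1]\subseteq(0,1]$ where $h'\neq 0$; and the strict inequality $F(1-\pi)>0$ in the easy direction needs the one-line case check $p=0$, $q>0$ that you allude to. With the Singer citation replaced by the minimum principle, the proof is complete and is a legitimate alternative to the paper's.
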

 
\begin{theorem}\label{thm:Poisson_bond}
When played on $T_{\chi}$ where $\chi$ is Poisson$(\lambda)$, for any $\lambda > 0$, the probability that the bond percolation game results in a draw is $0$ if and only if $(1-p-q) \lambda e^{-\lambda q} \leqslant e$.
\end{theorem}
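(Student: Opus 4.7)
The plan is to apply Theorem~\ref{thm:main_GW_bond}\eqref{main_GW_bond_3}, which reduces the question to determining when $g_{p,q}^{(2)}$ has a unique fixed point in $[0,1]$. For the Poisson$(\lambda)$ distribution, $G(x) = e^{\lambda(x-1)}$, so $g_{p,q}(x) = (1-q) - \beta e^{\lambda(x-1)}$, where I write $\beta := 1-p-q$. Since $g_{p,q}$ is strictly decreasing (and maps $[0,1]$ into $[0,1]$), it admits a unique fixed point $x^{\ast} \in [0,1]$, and any further fixed point of $g_{p,q}^{(2)}$ must appear as one coordinate of a $2$-cycle $\{a,b\}$ of $g_{p,q}$ with $a < x^{\ast} < b$.

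Set $H(x) := g_{p,q}^{(2)}(x) - x$ and $s(x) := g_{p,q}(x) + x$. Direct differentiation gives
\begin{equation*}
H'(x) \;=\; g_{p,q}'(g_{p,q}(x))\,g_{p,q}'(x) \,-\, 1 \;=\; \beta^{2}\lambda^{2}\,e^{\lambda(s(x) - 2)} \,-\, 1.
\end{equation*}
Because $G$ is strictly convex, $g_{p,q}$ is strictly concave, and hence $s$ is strictly concave and unimodal on $[0,1]$. The sign condition $H'(x) \leq 0$ is equivalent to $s(x) \leq K$ where $K := 2 - (2/\lambda)\ln(\beta\lambda)$, so $H' \leq 0$ holds throughout $[0,1]$ if and only if $s_{\max} \leq K$. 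Locating the maximizer $x_{0} = 1 - \lambda^{-1}\ln(\beta\lambda)$ of $s$ (handling the trivial case $\beta\lambda < 1$, where $x_{0} > 1$, separately) yields
\begin{equation*}
s_{\max} - K \;=\; -q \,+\, \frac{\ln(\beta\lambda)}{\lambda} \,-\, \frac{1}{\lambda},
\end{equation*}
which is non-positive precisely when $(1-p-q)\lambda e^{-\lambda q} \leq e$.

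The two directions then follow from elementary sign-counting. If $(1-p-q)\lambda e^{-\lambda q} \leq e$, then $H' \leq 0$ on $[0,1]$, so $H$ is non-increasing; combined with $H(0) \geq 0$, $H(1) \leq 0$ and $H(x^{\ast}) = 0$, this forces $x^{\ast}$ to be the unique zero of $H$. Conversely, if $(1-p-q)\lambda e^{-\lambda q} > e$, there exist $c_{1} < c_{2}$ in $[0,1]$ with $H' > 0$ on $(c_{1},c_{2})$ and $H' < 0$ on $[0,1] \setminus [c_{1},c_{2}]$. Using the fixed-point relation to write $g_{p,q}'(x^{\ast}) = -\lambda((1-q) - x^{\ast})$ and a short monotonicity comparison in $\beta$ (with $q, \lambda$ held fixed, both $|g_{p,q}'(x^{\ast})|$ and $\beta\lambda e^{-\lambda q}$ are strictly increasing in $\beta$ and cross their critical values $1$ and $e$ at the same $\beta$), one obtains $|g_{p,q}'(x^{\ast})| > 1$, whence $H'(x^{\ast}) = (g_{p,q}'(x^{\ast}))^{2} - 1 > 0$, placing $x^{\ast} \in (c_{1}, c_{2})$. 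Then $H(c_{1}) < 0 < H(c_{2})$, and combined with $H(0) \geq 0 \geq H(1)$ this produces additional zeros of $H$ in $(0, c_{1})$ and $(c_{2}, 1)$, i.e.\ a $2$-cycle of $g_{p,q}$, so the probability of draw is strictly positive.

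The main obstacle is the concavity-based reduction: the exponential form of $G$ lets the entire existence-of-$2$-cycle question collapse to the single scalar inequality $s_{\max} \leq K$, which then rearranges cleanly to $(1-p-q)\lambda e^{-\lambda q} \leq e$. Once this is in place, the remaining work is a short sign analysis driven by unimodality of $s$ and the intermediate value theorem.
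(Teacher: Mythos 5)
Your proposal is correct, and while its first half coincides with the paper's, the ``only if'' direction is genuinely different. The reduction to uniqueness of fixed points of $g_{p,q}^{(2)}$ via part \eqref{main_GW_bond_3} of Theorem~\ref{thm:main_GW_bond}, and the ``if'' direction, match the paper (\S\ref{sec:gen_tech} and \S\ref{sec:Poisson_bond}): your $H'(x)=\beta^{2}\lambda^{2}e^{\lambda(s(x)-2)}-1$ is maximized exactly at the paper's $x_{p,q}=1-\lambda^{-1}\log(\beta\lambda)$, your inequality $s_{\max}\leqslant K$ is literally the statement that the maximal slope of $g_{p,q}^{(2)}$ is at most $1$, and your conclusion is Lemma~\ref{lem:max_derivative_leq_1}. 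For the converse, the paper verifies two explicit inequalities --- $g_{p,q}^{(2)}(x_{p,q})\leqslant x_{p,q}$ for all $(p,q)\in I$, so that the minimum fixed point $w'$ satisfies $w'\leqslant x_{p,q}$, and $g_{p,q}(x_{p,q})>x_{p,q}$ when the condition fails, so that $x_{p,q}<\alpha_{p,q}$ --- forcing $w'\neq\alpha_{p,q}$. You instead show that the unique fixed point $x^{*}=\alpha_{p,q}$ of $g_{p,q}$ is \emph{unstable}, $|g_{p,q}'(x^{*})|>1$, and then use unimodality of $H'$ together with $H(0)>0>H(1)$ to produce two further fixed points of $g_{p,q}^{(2)}$ by the intermediate value theorem; this is the classical ``unstable fixed point of a decreasing map forces a $2$-cycle'' picture, and it buys the cleaner dynamical characterization that draws occur precisely when $\alpha_{p,q}$ is unstable, whereas the paper's route is a template that transfers verbatim to the other offspring distributions. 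Two points worth tightening: (i) ``$H$ non-increasing plus the boundary signs forces a unique zero'' needs the remark that, by strict concavity of $s$, the set $\{H'=0\}$ consists of at most two points, so $H$ is in fact strictly decreasing; (ii) your monotonicity-in-$\beta$ comparison implicitly uses that $|g_{p,q}'(x^{*})|=1$ and $\beta\lambda e^{-\lambda q}=e$ occur at the \emph{same} $\beta$ in both directions --- this is true, and is seen most directly by setting $u=1-q-x^{*}$, so that the fixed-point equation reads $\lambda u\,e^{\lambda u}=\lambda\beta e^{-\lambda q}$ and strict monotonicity of $t\mapsto te^{t}$ gives $\lambda u>1\Longleftrightarrow\lambda\beta e^{-\lambda q}>e$ at once, bypassing the comparison argument.
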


\begin{theorem}\label{thm:neg_bin_bond}
When $\chi$ is Negative Binomial$(r,\pi)$, for $r \in \mathbb{N}$ and $0 < \pi \leqslant 1$, the probability of draw in the bond percolation game played on $T_{\chi}$ is $0$ if and only if $(r-1)^{r+1}(1-\pi)(1-p-q)\pi^{r} \leqslant (q+\pi-q\pi)^{r+1}r^{r}$. 
\end{theorem}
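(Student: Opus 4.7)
In view of \eqref{main_GW_bond_3} of Theorem~\ref{thm:main_GW_bond}, the task reduces to characterizing precisely when $g_{p,q}^{(2)}$ admits a unique fixed point in $[0,1]$ for the Negative Binomial$(r,\pi)$ choice $G(x) = \left(\pi/(1-(1-\pi)x)\right)^{r}$. Since $G$ is strictly increasing and strictly convex on $[0,1]$, the function $g_{p,q}(x) = (1-q) - (1-p-q)G(x)$ is strictly decreasing and strictly concave there, and hence admits a unique fixed point $x^{*} \in (0, 1-q)$; moreover $g_{p,q}^{(2)}$ is strictly increasing, so any additional fixed point of $g_{p,q}^{(2)}$ must belong to a genuine $2$-cycle $\{a,b\}$ of $g_{p,q}$ with $a < x^{*} < b$ and $g_{p,q}(a) = b$, $g_{p,q}(b) = a$.

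\textbf{Bifurcation criterion.} I plan to show that $g_{p,q}^{(2)}$ has a unique fixed point in $[0,1]$ if and only if $|g_{p,q}'(x^{*})| \leqslant 1$, equivalently $(1-p-q)G'(x^{*}) \leqslant 1$. In one direction, if $(g_{p,q}^{(2)})'(x^{*}) = g_{p,q}'(x^{*})^{2} > 1$ then $g_{p,q}^{(2)}(x) - x < 0$ for $x$ slightly less than $x^{*}$, while $g_{p,q}^{(2)}(0) \geqslant 0$, so the intermediate value theorem produces another fixed point of $g_{p,q}^{(2)}$ in $(0, x^{*})$, which paired with its $g_{p,q}$-image in $(x^{*}, 1)$ yields a $2$-cycle. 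For the converse direction one must rule out $2$-cycles when $|g_{p,q}'(x^{*})| \leqslant 1$; I plan to do this algebraically by clearing denominators in $g_{p,q}^{(2)}(x) = x$ with respect to $u := 1-(1-\pi)x$ and factoring out the fixed-point factor $u^{r+1} - Ku^{r} - C$, where $K := q+\pi-q\pi$ and $C := (1-\pi)(1-p-q)\pi^{r}$. The resulting $2$-cycle factor has real roots in $(K,1)$ precisely when $|g_{p,q}'(x^{*})| > 1$; verifying this equivalence uniformly in $r$ is the main technical obstacle, although it is directly checkable for small $r$ (e.g., for $r=2$ the quotient is a quadratic in $u$ with discriminant proportional to $C(C - 4K^{3})$, agreeing with the threshold).

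\textbf{Algebraic reduction.} To translate the criterion $(1-p-q)G'(x^{*}) \leqslant 1$ together with the fixed-point equation $g_{p,q}(x^{*}) = x^{*}$ into the advertised inequality, I set $y := 1 - (1-\pi)x^{*} \in (K, 1)$, so that $G(x^{*}) = \pi^{r}/y^{r}$ and $G'(x^{*}) = r\pi^{r}(1-\pi)/y^{r+1}$. The fixed-point equation rearranges to
\begin{equation*}
(1-p-q)(1-\pi)\pi^{r} = y^{r}(y-K),
\end{equation*}
while the derivative criterion reads $y^{r+1} \geqslant r(1-p-q)(1-\pi)\pi^{r}$. Dividing the second condition by the first (and using $y > K > 0$) reduces to $y/(y-K) \geqslant r$, equivalently $y \leqslant rK/(r-1)$ when $r \geqslant 2$; substituting the boundary value $y = rK/(r-1)$ back into the fixed-point equation produces
\begin{equation*}
(r-1)^{r+1}(1-\pi)(1-p-q)\pi^{r} \leqslant r^{r}(q+\pi-q\pi)^{r+1},
\end{equation*}
which is the claimed inequality. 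The case $r = 1$ degenerates to the trivial $0 \leqslant K^{2}$, consistent with the abstract's remark that the Geometric offspring distribution never yields a positive probability of draw.
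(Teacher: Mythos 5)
Your reduction and the ``only if'' half are sound: the observation that any extra fixed point of $g_{p,q}^{(2)}$ must pair up into a genuine $2$-cycle of the decreasing map $g_{p,q}$, together with the local intermediate-value argument producing a fixed point in $(0,x^{*})$ when $g_{p,q}'(x^{*})^{2}>1$, correctly shows that the draw probability is positive when the advertised inequality fails. Your algebra identifying the condition $|g_{p,q}'(x^{*})|\leqslant 1$ with $(r-1)^{r+1}(1-\pi)(1-p-q)\pi^{r}\leqslant (q+\pi-q\pi)^{r+1}r^{r}$ also checks out, via $y^{r}(y-K)=C$ together with the (unstated but needed) monotonicity of $y\mapsto y^{r}(y-K)$ on $(K,\infty)$. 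This is arguably a cleaner route to that direction than the paper's, which instead locates the maximizer $x_{p,q}$ of $\frac{d}{dx}g_{p,q}^{(2)}(x)$ and shows $w'\leqslant x_{p,q}<\alpha_{p,q}$ when the inequality fails.

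The gap is in the converse, which you yourself flag: you have not shown that $|g_{p,q}'(x^{*})|\leqslant 1$ excludes $2$-cycles, and this is exactly the substantive half of the theorem (that the draw probability \emph{is} $0$ when the inequality holds). It is not a formality: for a general decreasing map, subcriticality of the derivative at the unique fixed point does not preclude $2$-cycles located away from it, so the claimed equivalence genuinely depends on the specific shape of $G$, and your proposed factorization of $g_{p,q}^{(2)}(x)=x$ after clearing denominators in $u=1-(1-\pi)x$ is only verified for $r=2$. The paper closes precisely this hole by a global rather than local argument: it computes $\frac{d}{dx}g_{p,q}^{(2)}(x)$ in closed form, shows this derivative is unimodal with maximum attained at an explicit $x_{p,q}$, proves that this maximum value is $\leqslant 1$ exactly when the advertised inequality holds, and then applies Lemma~\ref{lem:max_derivative_leq_1} (a slope of $y=g_{p,q}^{(2)}(x)$ never exceeding $1$ forbids a second crossing of $y=x$). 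To complete your proof you would either need to carry out the uniform-in-$r$ factorization argument you sketch, or replace the local criterion by a global derivative bound of the paper's type.
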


\begin{theorem}\label{thm:0_d_bond}
When $\chi$ is the probability distribution with $\chi(0) = 1-\pi$ and $\chi(d) = \pi$ for some $0 < \pi < 1$ and some $d \in \mathbb{N}$ with $d \geqslant 2$, the probability of draw in the bond percolation game played on $T_{\chi}$ is $0$ if and only if $(1-p-q) \pi \left\{\pi(1-q) + p(1-\pi)\right\}^{d-1} \leqslant \frac{(d+1)^{d-1}}{d^{d}}$.
\end{theorem}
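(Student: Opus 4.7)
The plan is to apply Theorem~\ref{thm:main_GW_bond}\eqref{main_GW_bond_3}, which reduces the problem to determining when $g_{p,q}^{(2)}$ has a unique fixed point in $[0,1]$. For the distribution in question, $G(x) = (1-\pi) + \pi x^d$, so
\[g_{p,q}(x) = c - a x^d, \quad \text{where } c := \pi(1-q) + p(1-\pi) \text{ and } a := (1-p-q)\pi.\]
One checks that $c \in (0, 1)$, $a > 0$, and the constraint $p \geqslant 0$ translates to $a \leqslant c$, so $g_{p,q}$ maps $[0,1]$ into $[p, c] \subset [0, 1)$, is strictly decreasing, and is strictly concave. In particular it has a unique fixed point $x^{*} \in (0,1)$.

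I would next study the zeros of $F(x) := g_{p,q}^{(2)}(x) - x$ via its derivative $F'(x) = H(x) - 1$, where $H(x) := a^{2} d^{2} (c - a x^d)^{d-1} x^{d-1}$. A logarithmic derivative identifies the unique interior critical point of $H$ on $[0, (c/a)^{1/d}]$ as $x_0 := [c/(a(d+1))]^{1/d}$, at which
\[H(x_0) = \bigl[(ac^{d-1})^{d+1}\, d^{d(d+1)}\, (d+1)^{1-d^{2}}\bigr]^{1/d}.\]
Taking $d$-th then $(d+1)$-th roots gives the key equivalence $H(x_0) \leqslant 1 \iff ac^{d-1} \leqslant (d+1)^{d-1}/d^d$, which is precisely the announced threshold.

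Assuming that threshold, I would split into two subcases. If $a \geqslant c/(d+1)$ then $x_0 \in (0,1]$ and $H \leqslant H(x_0) \leqslant 1$ on $[0,1]$. If instead $a < c/(d+1)$, then $x_0 > 1$, $H$ is increasing on $[0,1]$, and a crude bound $H(1) = a^{2} d^{2} (c-a)^{d-1} < d^{2} c^{d+1}/(d+1)^{2} < 1$ yields the same conclusion. Either way $F' \leqslant 0$ throughout $[0,1]$, with equality only at isolated points, so $F$ is strictly decreasing and $x^{*}$ is its unique zero.

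The main obstacle is the converse: showing that if $ac^{d-1} > (d+1)^{d-1}/d^d$ then a genuine $2$-cycle exists. The plan is to fix $c$ and view $x^{*}$ as a function of $a$; implicit differentiation of $x^{*} = c - a(x^{*})^d$ yields $\partial_a x^{*} = -(x^{*})^d/[1 + ad(x^{*})^{d-1}]$, and consequently
\[\frac{\partial}{\partial a}\bigl[ad(x^{*})^{d-1}\bigr] = \frac{d(x^{*})^{d-1}\bigl(1 + a(x^{*})^{d-1}\bigr)}{1 + ad(x^{*})^{d-1}} > 0,\]
so $a \mapsto ad(x^{*})^{d-1}$ is strictly increasing. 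A direct check confirms that the critical values $ad(x^{*})^{d-1} = 1$ and $ac^{d-1} = (d+1)^{d-1}/d^d$ (both corresponding to $x^{*} = cd/(d+1)$) are crossed simultaneously. Hence above threshold, $F'(x^{*}) = [ad(x^{*})^{d-1}]^{2} - 1 > 0$; combined with $F(0) = c(1 - ac^{d-1}) > 0$ (since $ac^{d-1} \leqslant c^{d} < 1$) and $F(1) = c - ap^{d} - 1 < 0$, the intermediate value theorem produces zeros of $F$ in each of $(0, x^{*})$ and $(x^{*}, 1)$, exhibiting at least three fixed points of $g_{p,q}^{(2)}$ and completing the proof.
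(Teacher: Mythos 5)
Your proof is correct, and while the ``if'' direction follows essentially the paper's route, your ``only if'' direction is genuinely different. For sufficiency you compute, exactly as in \S\ref{sec:0_d_bond}, the critical point $x_0=x_{p,q}$ of the derivative of $g_{p,q}^{(2)}$ and identify $H(x_0)\leqslant 1$ with the stated threshold $ac^{d-1}\leqslant (d+1)^{d-1}d^{-d}$; your conclusion via monotonicity of $F(x)=g_{p,q}^{(2)}(x)-x$ is just a rephrasing of Lemma~\ref{lem:max_derivative_leq_1} (and your case split on whether $x_0\leqslant 1$ is, if anything, slightly more careful than the paper's blanket claim that the maximum over all of $\mathcal{D}$ occurs at $x_{p,q}$). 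For necessity, the paper sandwiches $w'\leqslant x_{p,q}<\alpha_{p,q}$ by checking $g_{p,q}^{(2)}(x_{p,q})\leqslant x_{p,q}$ and $g_{p,q}(x_{p,q})>x_{p,q}$, then invokes Lemma~\ref{lem:draw_probab_0_only_if_unique_fixed_point}; you instead evaluate the slope of $g_{p,q}^{(2)}$ at the unique fixed point $x^{*}=\alpha_{p,q}$ of $g_{p,q}$, namely $[g_{p,q}'(x^{*})]^{2}=[ad(x^{*})^{d-1}]^{2}$, show it exceeds $1$ precisely above threshold, and extract two further fixed points of $g_{p,q}^{(2)}$ by the intermediate value theorem on $(0,x^{*})$ and $(x^{*},1)$. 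This identifies the threshold conceptually as the instability (period-doubling) point of $\alpha_{p,q}$ and actually produces three distinct fixed points, which the paper's argument does not; the paper's version buys a uniform template reused verbatim across all the offspring distributions of \S\ref{sec:specific_offspring} and avoids implicit differentiation. One small remark: your implicit-differentiation step can be bypassed entirely, since the fixed-point equation gives $ad(x^{*})^{d-1}=d(c-x^{*})/x^{*}$, so $ad(x^{*})^{d-1}>1\Longleftrightarrow x^{*}<cd/(d+1)\Longleftrightarrow g_{p,q}\bigl(cd/(d+1)\bigr)<cd/(d+1)$, and the last inequality unwinds directly to $ac^{d-1}>(d+1)^{d-1}d^{-d}$.
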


Our final main result deviates a little in flavour from the theorems stated above, and concerns itself with the average or expected duration of the bond percolation game on $T_{\chi}$, for any offspring distribution $\chi$.
\begin{theorem}\label{thm:avg_dur}
Suppose the probability of the event that the bond percolation game, played on $T_{\chi}$, results in a draw is $0$. As long as the derivative of the function $g_{p,q}(x)$ as well as the derivative of $g_{p,q}^{(2)}(x)$ at $x=w'$ (with $w'$ as defined in Theorem~\ref{thm:main_GW_bond}) is strictly less than $1$, the expectation of the (random) time for which the game continues is finite.
\end{theorem}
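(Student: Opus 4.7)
The plan is to express $\Prob(D > n)$, where $D$ denotes the duration of the game, in closed form via certain iterates of $g_{p,q}$, and then exploit the derivative hypotheses to show these quantities decay geometrically in $n$. Finiteness of the expectation will then follow from $\E[D] = \sum_{n \geqslant 0} \Prob(D > n)$.

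The first step is to set up finite-horizon analogues of the recursions underlying Theorem~\ref{thm:main_GW_bond}. Let $W_n = \Prob(\text{the first player wins the game in at most } n \text{ rounds})$ and $L_n = \Prob(\text{the first player loses the game in at most } n \text{ rounds})$. Emulating the derivation of parts \eqref{main_GW_bond_1}--\eqref{main_GW_bond_2} of Theorem~\ref{thm:main_GW_bond}---conditioning on the labels of the edges out of $\phi$ and on the finite-horizon outcomes of the subgames rooted at its children---yields $L_n = G(u_{n-1})$ and $W_n = 1 - G(v_{n-1})$ for $n \geqslant 1$, where the auxiliary sequences $u_n := p + (1-p-q)W_n$ and $v_n := (1-q) - (1-p-q)L_n$ satisfy the coupled recursion $u_n = g_{p,q}(v_{n-1})$ and $v_n = g_{p,q}(u_{n-1})$, with $u_0 = p$ and $v_0 = g_{p,q}(0)$. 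A direct computation then gives the closed form
\[
\Prob(D > n) = 1 - W_n - L_n = G(v_{n-1}) - G(u_{n-1}).
\]

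The second step exploits the no-draw hypothesis. Since the game terminates almost surely, $W_n \uparrow w$ and $L_n \uparrow \ell$ as $n \to \infty$, whence $u_n \to w'$; moreover $w + \ell = 1$ forces $v_n \to (1-q) - (1-p-q)\ell = p + (1-p-q) w = w'$ as well, which is equivalent to $g_{p,q}(w') = w'$ (so in the no-draw regime, $w'$ is a genuine fixed point of $g_{p,q}$, and not merely a periodic point of $g_{p,q}^{(2)}$). Since $g_{p,q}$ is decreasing, a simple induction shows $u_n \leqslant w' \leqslant v_n$ for every $n$; by the mean value theorem,
\[
v_n - u_n = \bigl|g_{p,q}'(\xi_n)\bigr|(v_{n-1} - u_{n-1}), \qquad \xi_n \in [u_{n-1}, v_{n-1}].
\]
As $u_n, v_n \to w'$, $\xi_n \to w'$ and $|g_{p,q}'(\xi_n)| \to |g_{p,q}'(w')|$. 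The identity $g_{p,q}(w') = w'$ yields $(g_{p,q}^{(2)})'(w') = (g_{p,q}'(w'))^2$, so the hypothesis on the derivative of $g_{p,q}^{(2)}$ at $w'$ amounts exactly to $|g_{p,q}'(w')| < 1$; hence $v_n - u_n$ decays geometrically in $n$. Convexity of $G$ then gives $\Prob(D > n) \leqslant G'(v_{n-1})(v_{n-1} - u_{n-1})$, and $G'(v_{n-1}) \to G'(w') = |g_{p,q}'(w')|/(1-p-q) < \infty$, so $\Prob(D > n)$ itself decays geometrically and $\E[D] < \infty$.

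The main obstacle, in my estimation, lies in the first step: carefully justifying the closed form for $\Prob(D > n)$ by verifying that the finite-horizon optimal-play dynamics of the bond percolation game (where the winner wants to wrap up in $\leqslant n$ rounds and the loser to survive beyond round $n$) admit the same conditioning-on-children factorization as does the infinite-horizon game in the proof of Theorem~\ref{thm:main_GW_bond}. Once this structural recursion is secured, the subsequent convergence analysis is a controlled first-order linearization of $g_{p,q}$ around $w'$ and is essentially routine.
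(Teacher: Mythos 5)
Your proposal is correct, and its engine is the same as the paper's: write $\E[\mathcal{T}]=\sum_{n}\Prob[\mathcal{T}>n]$, feed the finite-horizon recursions $w_{n}=1-G(v_{n-1})$, $\ell_{n}=G(u_{n-1})$ (which are exactly \eqref{GW_recurrence_1}--\eqref{GW_recurrence_2} after the substitutions $u_{n}=w'_{n}$, $v_{n}=(1-q)-\ell'_{n}$) into the tail sum, and obtain geometric decay from the mean value theorem together with continuity of the derivative near the fixed point $w'$. The bookkeeping differs in a way that is a mild improvement: the paper controls the two gaps $w'-w'_{n}$ and $\ell'-\ell'_{n}$ separately, applying the MVT once to $g_{p,q}^{(2)}$ and once to $g_{p,q}$, and therefore invokes both derivative hypotheses of the theorem; you instead track the single sandwich $u_{n}\leqslant w'\leqslant v_{n}$ with the interleaved recursion $u_{n}=g_{p,q}(v_{n-1})$, $v_{n}=g_{p,q}(u_{n-1})$, contract the one quantity $v_{n}-u_{n}$ by a single application of the MVT to $g_{p,q}$, and observe that in the no-draw regime $g_{p,q}(w')=w'$ forces $\frac{d}{dx}g_{p,q}^{(2)}(x)\big|_{x=w'}=\big(g'_{p,q}(w')\big)^{2}$, so the two hypotheses in the theorem statement are equivalent and only $|g'_{p,q}(w')|<1$ is actually needed. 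The final passage from $v_{n-1}-u_{n-1}$ to $\Prob[\mathcal{T}>n]=G(v_{n-1})-G(u_{n-1})$ via convexity of $G$ is sound (and $G'(v_{n-1})$ stays bounded for large $n$ because $v_{n-1}\to w'<1$), though it is an unnecessary extra step: the identity $\Prob[\mathcal{T}>n]=\frac{1}{1-p-q}(v_{n}-u_{n})$ gives the conclusion directly. The only slip is the initial condition $v_{0}=g_{p,q}(0)$, which should read $v_{0}=(1-q)-(1-p-q)\ell_{0}=1-q$; this is immaterial to the argument. The structural recursion you flag as the main obstacle is precisely what the paper establishes in \S\ref{subsec:recurrence_bond} (via Lemma~\ref{lem:compactness}), so nothing further is needed there.
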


\subsection{Motivations and brief review of the literature}\label{subsec:lit_review}
This work bears intimate connections with a diverse array of topics spanning probability, combinatorics, theoretical computer science and physics. This is either because of the objects we study in this paper, or because of the approach we adopt to analyze said objects. Of particular interest are the connections evident with \emph{recursive distributional equations}, \emph{automata theory} (more specifically, with \emph{finite state probabilistic tree automata}, for which we refer the reader to \S\ref{sec:ergodicity}), \emph{spatial mixing} phenomena in models of \emph{statistical mechanics} on rooted trees, and of course, the classically studied topic of \emph{percolation} in physics.

We begin \S\ref{subsec:lit_review} with some discussion on percolation games, along with a few other instances of \emph{two-player combinatorial games}, and the ties that such games often exhibit, via the recurrence relations arising out of these games, with recursive distributional equations, as well as with cellular automata or finite-state tree automata (whether deterministic or probabilistic). Recall from \S\ref{subsec:overview} that \cite{holroyd2019percolation} and \cite{holroyd2021galton} serve as the two main sources of motivation for this work. We have already included in \S\ref{subsec:overview} a brief description of the site percolation game on the $2$-dimensional square lattice that has been studied in \cite{holroyd2019percolation}. In \cite{bhasin2022class}, the following modification of this game is considered: a move now involves relocating the token from where it is currently situated, say the vertex $(x,y)$ of $\mathbb{Z}^{2}$, to any of the vertices $(x+2,y)$, $(x+1,y+1)$ and $(x,y+2)$. In both \cite{holroyd2019percolation} and \cite{bhasin2022class}, it is shown that the probability of the event that the site percolation game under consideration results in a draw is $0$ if and only if a suitably defined (via the recurrence relations arising from the game) $1$-dimensional probabilistic cellular automaton (PCA) is ergodic, following which it is shown, using the technique of weight functions, that this PCA is ergodic whenever $p+q > 0$. In \cite{bhasin2022ergodicity}, yet another modification of the site percolation game on $\mathbb{Z}^{2}$ is considered, with the token now being allowed to be moved from $(x,y)$ to one of $(x,y+1)$ and $(x+1,y+1)$ when $x$ is even, and from $(x,y)$ to one of $(x+1,y+1)$ and $(x+2,y+1)$ when $x$ is odd. The probability of draw in this game, for various values of the underlying parameters $p$ and $q$, is explored alongside ergodicity properties of a $1$-dimensional \emph{generalized} PCA whose stochastic update rule captures the recurrence relations arising out of this game. We mention here that \cite{bresler2022linear} employs the technique of weight functions to come up with computer-assisted proofs of ergodicity for two classes of PCAs.  

Recall the brief discussion on the contents of \cite{holroyd2021galton} included in \S\ref{subsec:overview}. In \cite{podder2022combinatorial}, a generalization of each of the normal and mis\`{e}re games studied in \cite{holroyd2021galton} is investigated: here, the token is allowed to be moved from where it is currently located, say a vertex $u$ of the rooted GW tree, to any descendant $v$ of $u$ such that $v$ is at a distance at most $k$ away from $u$, for some pre-specified $k \in \mathbb{N}$. In both \cite{holroyd2021galton} and \cite{podder2022combinatorial}, the key component of the analysis constitutes the recurrence relations that arise out of the games under consideration, and the approach is similar in flavour to that adopted in this paper. It is worthwhile to note here that our work in this paper can be seen as a generalization of the set-up considered in \cite{holroyd2021galton}. 

In \cite{johnson2020random}, given a deterministic finite state tree automaton $A$, with alphabet $\mathcal{S}$ and an update rule $f_{A}: \mathbb{N}_{0}^{\mathcal{S}} \rightarrow \mathcal{S}$ (see \S\ref{subsec:tree_automata} for relevant definitions), a probability distribution $\nu$, supported on $\mathcal{S}$, is called a \emph{fixed point} of $A$ if the following is true: if the children of the root $\phi$ of a GW tree $T_{\chi}$ are assigned i.i.d.\ $\nu$ states from $\mathcal{S}$, the (random) state induced at $\phi$ via $f_{A}$ will again have law $\nu$. A map $\iota$ from the space of all locally finite rooted trees to $\mathcal{S}$ is called an \emph{interpretation} for $A$ if the following is true: given any rooted tree $T$, and any vertex $u$ of $T$ with children $u_{1}, \ldots, u_{r}$, the state $\iota(T(u))$ is obtained by applying the update rule $f_{A}$ to the tuple $(n_{\sigma}: \sigma \in \mathcal{S})$, where $n_{\sigma}$ denotes the number of $1 \leqslant i \leqslant r$ such that $\iota(T(u_{i})) = \sigma$. Here, $T(u)$ (respectively $T(u_{i})$, for $1 \leqslant i \leqslant r$) denotes the sub-tree of $T$ induced on the subset of vertices comprising $u$ (respectively $u_{i}$) and its descendants. A fixed point $\nu$ of $A$ is said to be \emph{interpretable} if there exists an interpretation $\iota$ for $A$ such that the law of $\iota(T_{\chi})$ is $\nu$. Such questions of interpretability tie in closely with the notion of \emph{endogeny} (see Definition 7 and other relevant parts of \S~2.4 of \cite{aldous2005a}), some literature on which has been briefly discussed below. 

Perhaps one of the most seminal works in the literature, that relates directly to the approach involving recurrence relations that we adopt for analyzing the games, is \cite{aldous2005a}, which concerns itself with the study of recursive distributional equations. In \cite{aldous2005a}, a family $(\xi_{i}: i \geqslant 1)$ of jointly distributed random variables and a family $(X_{i}: i \geqslant 1)$ of independent and identically distributed random variables are considered, with the law of each $X_{i}$ being $\mu$. The two families $(\xi_{i}: i \geqslant 1)$ and $(X_{i}: i \geqslant 1)$ are independent of each other. Letting $T(\mu)$ indicate the law of the random variable $g\big((\xi_{i}, X_{i}): i \geqslant 1\big)$, where $g$ is a function defined on a suitable domain, one of \cite{aldous2005a}'s chief goals is to explore the existence and uniqueness of those $\mu$, termed \emph{fixed points}, for which we have $T(\mu) = \mu$. In this context, the \emph{recursive tree process} is introduced (see \S~2.3 of \cite{aldous2005a}). Let a vertex $u$ of a rooted tree have $N$ children (where $N$ could be a random number taking values in $\mathbb{N}_{0} \cup \{\infty\}$) that are named $u_{1}, u_{2}, \ldots, u_{N}$ when $N$ is finite, and $u_{1}, u_{2}, u_{3}, \ldots$ when $N$ takes the value $\infty$. Let $X_{i}$ denote the (random) state assigned to the child $u_{i}$, and let $\xi_{i}$ indicate some random noise associated with $u_{i}$, for every $i$. Then the (random) state $X$ assigned to the parent vertex $u$ is given by $g\big((\xi_{i}, X_{i}): i \geqslant 1\big)$. Further advancements in the study of recursive distributional equations and recursive tree processes (as well as the notion of endogeny that we alluded to in the previous paragraph) can be found in \cite{bandyopadhyay2002bivariate}, \cite{bandyopadhyay2006necessary}, \cite{bandyopadhyay2004bivariate}, \cite{bandyopadhyay2011endogeny} etc. 

We mention here that recursive distributional equations find numerous applications, such as in finding \emph{maximal weight partial matchings on random trees} (see \S 3 of \cite{aldous2004objective}) and \emph{minimal cost perfect matchings} (see \S 5 of \cite{aldous2004objective}), in the \emph{cavity method} implemented for \emph{community detection in sparse graphs} (see, for instance, \S 3.3 of \cite{javanmard2016phase}), and in establishing \emph{correlation decay} that subsequently leads to the discovery of the asymptotics for the \emph{log-partition functions} in certain models of statistical physics (see, for instance, the discussions in \cite{bandyopadhyay2008counting}), to name just a few. 

It is undeniable that a tremendous motivation for studying percolation games arises from the classically studied topic of percolation in physics. It is the presence of players who play \emph{optimally} and make \emph{adversarial} moves against one another, in a bid to win, that sets the former apart from the latter, and we take this opportunity to draw an important comparison between the two. It is well-known and relatively straightforward to see that if each edge of the rooted $d$-regular tree $T_{d}$, independently, is marked \emph{closed} (this corresponds to the label that reads trap in our set-up) with probability $p$, and \emph{open} (this corresponds to the label that reads safe in our set-up) with probability $1-p$, then for all $p < 1-d^{-1}$, there exists an open path (i.e.\ a path that comprises \emph{only} open edges) from the root $\phi$ of $T_{d}$ to infinity, i.e.\ \emph{percolation happens}. However, the existence of such a path is \emph{not} enough to guarantee that the bond percolation game, endowed with the parameter-pair $(p,q) = (p,0)$ (i.e.\ no edge is labeled a target) and played on $T_{d}$, will have a strictly positive probability of resulting in a draw. This is why, as can be deduced from Remark~\ref{rem:regular} by setting $q=0$, we require $p < 1 - (d+1)^{d-1}d^{-d}$ for our game to have a strictly positive probability of draw, where we note that $1 - (d+1)^{d-1}d^{-d} < 1 - d^{-1}$. 

We do not delve too deep into the literature on percolation here, as it is vast and extensive, but we draw the reader's attention to \cite{grimmett1999percolation}, to \cite{toom2001contours} (emphasising on the connections explored therein between cellular automata and percolation), and to \cite{lyons1990random} and \cite{lyons1992random} (both of which specifically study percolation on rooted trees, with the former revealing a relation between the \emph{branching number} of the tree, the probability with which each edge is left open, and the probability of existence of an open path from the root to infinity). While we are on the topic of percolation, we refer to the \emph{Maker-Breaker percolation games} (see \cite{day2021maker1}, \cite{day2021maker2}, \cite{dvovrak2021maker}). To begin with, all edges of an infinite connected graph are marked \emph{unsafe}. Players \emph{Maker} and \emph{Breaker} take turns to make moves, where, in each of her moves, Maker marks $a$ of the yet-unsafe edges as \emph{safe}, and in each of her moves, Breaker deletes $b$ of the yet-unsafe edges from the graph, where $a \in \mathbb{N}$ and $b \in \mathbb{N}$ are pre-specified. Breaker wins if at any point of time in the game, the connected component containing a pre-specified vertex of the graph becomes finite; else Maker wins.

Finally, we mention here that the ergodicity of probabilistic tree automata, as defined in Definition~\ref{defn:ergodicity}, is reminiscent of how \emph{weak spatial mixing} is defined and proved for models of statistical mechanics on rooted trees. In particular, we refer the reader to \S 3.4 of \cite{brightwell2002random}, \cite{jonasson2002uniqueness}, and Lemma 1.3 (and Equation (3) therein) of \cite{de2023uniqueness}. For the definition of \emph{Gibbs states}, corresponding to a given model of statistical mechanics, on an infinite graph, we refer the reader to \cite{simon2014statistical}). From this definition, it becomes evident that for there to be a \emph{unique Gibbs measure} for a model of statistical mechanics defined on a rooted infinite tree, the effect that \emph{any} configuration of \emph{states} or \emph{spins}, assigned to the vertices at distance $n$ from the root $\phi$, has on the induced (random) state at $\phi$ must decay as $n \rightarrow \infty$. This is precisely what \eqref{corr_decay} of Definition~\ref{defn:ergodicity} represents, as well.

\subsection{Organization of the paper}\label{subsec:org} The proof of Theorem~\ref{thm:main_GW_bond} is described in detail in \S\ref{sec:bond_GW}, whereas the proof of Theorem~\ref{thm:main_GW_site}, both directly and via an intimate connection between bond percolation games and site percolation games on rooted GW trees, is outlined in \S\ref{sec:site_GW}. The whole of \S\ref{sec:gen_tech} is dedicated to explaining the general argument that is subsequently employed to obtain complete descriptions of the regimes in which the bond percolation game, played on $T_{\chi}$ with $\chi$ belonging to specified classes of distributions, has probability $0$ of resulting in a draw. The results corresponding to the Binomial, Poisson and Negative Binomial offspring distributions, as well as for any offspring distribution supported on $\{0,d\}$ for $d \in \mathbb{N}$, $d \geqslant 2$, are stated in Theorems~\ref{thm:binomial_bond},\ref{thm:Poisson_bond},\ref{thm:neg_bin_bond} and \ref{thm:0_d_bond}, and the corresponding proofs can be found in  \S\ref{sec:binomial_bond}, \S\ref{sec:Poisson_bond}, \S\ref{sec:negative_binomial_bond} and \S\ref{sec:0_d_bond} respectively. In \S\ref{sec:ergodicity}, we illustrate how a suitably defined probabilistic tree automaton is ergodic if and only if the bond percolation game played on $T_{d}$, where $T_{d}$ denotes the rooted $d$-regular tree in which each vertex has precisely $d$ children, has probability $0$ of resulting in a draw. Finally, \S\ref{sec:finite_expected_duration} is dedicated to the proof of Theorem~\ref{thm:avg_dur} which talks about the expected duration of a bond percolation game that has probability $0$ of ending in a draw.

\section{The bond percolation game on the rooted Galton-Watson tree}\label{sec:bond_GW}
Recall that the players are named $A$ and $B$, and $A$ is assumed to play the first round of the game. Let us define the following (random) subsets of the vertices of $T_{\chi}$:
\begin{enumerate}
\item We indicate by $W$ the subset comprising vertices $v$ of $T_{\chi}$ such that if the game begins with the token at $v$, then $A$ wins.
\item We indicate by $L$ the subset comprising vertices $v$ of $T_{\chi}$ such that if the game begins with the token at $v$, then $A$ loses.
\item We indicate by $D$ the subset comprising vertices $v$ of $T_{\chi}$ such that if the game begins with the token at $v$, it results in a draw.
\end{enumerate}
We set $W_{0} = L_{0} = \emptyset$, and for each $n \in \mathbb{N}$, we let 
\begin{enumerate}
\item $W_{n} \subset W$ consist of vertices $v$ such that the game that begins with the token at $v$ lasts for at most $n$ rounds (i.e.\ $A$ can guarantee to win the game within $n$ rounds);
\item $L_{n} \subset L$ consist of vertices $v$ such that the game that begins with the token at $v$ lasts for at most $n$ rounds (i.e.\ $B$ can guarantee to defeat $A$ within $n$ rounds);
\item $D_{n}$ consist of vertices $v$ such that the game that begins with the token at $v$ lasts for at least $n+1$ rounds.
\end{enumerate} 
We mention here, in particular, that $v \in L_{1}$ if and only if either $v$ is childless or the directed edge $(v,v')$ has been labeled a trap for \emph{every} child $v'$ of $v$; on the other hand, $v \in W_{1}$ if and only if there exists at least one child $v'$ of $v$ such that $(v,v')$ has been labeled a target.

We let $w = w(p,q)$ denote the probability of the event that the root $\phi$ of $T_{\chi}$ belongs to $W$, and $\ell = \ell(p,q)$ the probability of the event that $\phi$ belongs to $L$ (henceforth, we drop $(p,q)$ from the notation as the parameter-pair $(p,q)$ has already been fixed before the game begins). We let $w_{n}$ denote the probability of the event that $\phi$ is in $W_{n}$, and $\ell_{n}$ denote the probability of the event that $\phi$ is in $L_{n}$, for each $n \in \mathbb{N}_{0}$ (so that $w_{0} = \ell_{0} = 0$). From the definitions above, it is immediate that $W_{n} \subset W_{n+1}$ and $L_{n} \subset L_{n+1}$ for all $n \in \mathbb{N}_{0}$, and consequently, each of $\{w_{n}\}_{n \in \mathbb{N}_{0}}$ and $\{\ell_{n}\}_{n \in \mathbb{N}_{0}}$ forms an increasing sequence of non-negative real numbers in $[0,1]$.

Our objective, now, is to show the following: if a game is destined to \emph{not} result in a draw, the player who is destined to win can guarantee to do so in a \emph{finite} number of rounds that can be specified \emph{a priori}. Formally, this is captured by the following lemma: 
\begin{lemma}\label{lem:compactness}
In the bond percolation game played on $T_{\chi}$, we have $\bigcup_{n=0}^{\infty}W_{n} = W$ and $\bigcup_{n=0}^{\infty}L_{n} = L$.
\end{lemma}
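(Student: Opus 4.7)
My plan is a direct compactness argument via K\"onig's lemma. Since $W_n \subseteq W$ for every $n$ by definition, the inclusion $\bigcup_n W_n \subseteq W$ is trivial, and likewise for $L$; the substantive content lies in the reverse inclusions $W \subseteq \bigcup_n W_n$ and $L \subseteq \bigcup_n L_n$. I shall sketch the first; the second is identical with the roles of $A$ and $B$ interchanged.

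Fix a realization of $T_{\chi}$ and of the trap/target/safe edge labelling, together with a vertex $v \in W$. By definition of $W$ and the perfect-information nature of the game, $A$ possesses a (deterministic) strategy $\sigma_{A}$ such that every play beginning with the token at $v$ and consistent with $\sigma_{A}$ ends, in finitely many rounds, with $A$ being the winner. Because $T_{\chi}$ is a tree, every position reachable from $v$ is identified with a unique vertex, and so $\sigma_{A}$ can be described as a choice of an outgoing edge at every even-depth descendant of $v$ (the positions from which it is $A$'s turn to move). Using this, I construct the \emph{play subtree} $T_{v}^{\sigma_{A}}$ of the sub-tree of $T_{\chi}$ rooted at $v$ by keeping, at every even-depth vertex, only the child dictated by $\sigma_{A}$, and, at every odd-depth vertex, all children; I truncate $T_{v}^{\sigma_{A}}$ as soon as the game terminates, i.e.\ upon reaching an edge labelled trap or target, or a leaf of $T_{\chi}$.

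Two properties of $T_{v}^{\sigma_{A}}$ are key. First, it is locally finite, because $\chi$ is supported on $\mathbb{N}_{0}$ and so every vertex of $T_{\chi}$ has finitely many children. Second, it contains no infinite path: any such path would describe a play consistent with $\sigma_{A}$ that continues for eternity, contradicting the assumption that $\sigma_{A}$ forces a win in a finite number of rounds. By K\"onig's lemma, a locally finite rooted tree with no infinite path is itself finite, so $T_{v}^{\sigma_{A}}$ has finite depth, say $n = n(v,\sigma_{A})$. Hence every play from $v$ in which $A$ uses $\sigma_{A}$ terminates within $n$ rounds with $A$ winning, so $v \in W_{n}$, establishing $W \subseteq \bigcup_{n} W_{n}$.

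The only subtle point, and hence the main potential obstacle, is justifying that $v \in W$ really gives a strategy $\sigma_{A}$ whose play subtree has no infinite branch. This rests on interpreting membership of $v$ in $W$ as the existence of a winning strategy for $A$ in finite time — a standard consequence of the game being of perfect information, with an open winning condition, on a locally finite tree — and, once granted, K\"onig's lemma does the rest of the work; the inclusion $L \subseteq \bigcup_{n} L_{n}$ follows by the same argument with $\sigma_{A}$ replaced by a suitable strategy $\sigma_{B}$ for $B$.
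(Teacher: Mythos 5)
Your proof is correct, and it is essentially the argument the paper relies on: the paper omits the proof of this lemma, deferring to Proposition 7 of \cite{holroyd2021galton}, whose proof is exactly this compactness argument -- restrict to the subtree of plays consistent with a winning strategy, observe it is locally finite with no infinite branch, and invoke K\"onig's lemma to bound the duration. Your explicit flagging of the determinacy point (that $v \in W$ yields a strategy all of whose consistent plays terminate in a win for $A$) is appropriate and consistent with how the paper implicitly treats the partition into $W$, $L$, $D$.
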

This lemma is analogous to, and its proof the same as that of, Proposition 7 of \cite{holroyd2021galton}. We, therefore, have omitted the proof in this paper. Note that Lemma~\ref{lem:compactness} allows us to conclude that 
\begin{equation}
w_{n} \uparrow w \text{ and } \ell_{n} \uparrow \ell \text{ as } n \rightarrow \infty.\label{w_{n}_ell_{n}_limits}
\end{equation}

\subsection{Recurrence relations for the bond percolation game}\label{subsec:recurrence_bond}
Given that the root $\phi$ of $T_{\chi}$ has $m$ children, say $v_{1}, \ldots, v_{m}$, for some $m \in \mathbb{N}$, we note that $\phi \in W_{n}$ if and only if 
\begin{enumerate}
\item either there exists some $1 \leqslant k \leqslant m$ such that the edge $(\phi,v_{k})$ is a target,
\item or else, there exists no $1 \leqslant \ell \leqslant m$ such that $(\phi,v_{\ell})$ is a target, but there exists at least one $1 \leqslant k \leqslant m$ such that $(\phi,v_{k})$ is safe and $v_{k} \in L_{n-1}$.
\end{enumerate}
Note that, conditioned on $\phi$ having children $v_{1}, \ldots, v_{m}$, the (random) trees $T_{\chi}(v_{1})$, $\ldots$, $T_{\chi}(v_{m})$ are all i.i.d.\ copies of $T_{\chi}$ (recall that $T_{\chi}(v_{i})$ denotes the sub-tree of $T_{\chi}$ induced on the subset of vertices comprising $v_{i}$ and all its descendants), so that the probability of the event $v_{k} \in L_{n-1}$ is given by $\ell_{n-1}$, for each $1 \leqslant k \leqslant m$. We thus have
\begin{align}
w_{n} &= \sum_{m=0}^{\infty} \left\{1 - (1-q)^{m}\right\}\chi(m) + \sum_{m=0}^{\infty}\sum_{i=0}^{m}\left\{(1-q)^{m} - p^{i}(1-q)^{m-i}\right\}\Prob\left[i \text{ out of } v_{1}, \ldots, v_{m} \text{ are in } L_{n-1}\right]\chi(m)\nonumber\\
&= \sum_{m=0}^{\infty}\sum_{i=0}^{m}\left\{1 - p^{i}(1-q)^{m-i}\right\}{m \choose i}\ell_{n-1}^{i}\left(1-\ell_{n-1}\right)^{m-i}\chi(m)\nonumber\\
&= 1 - \sum_{m=0}^{\infty}\sum_{i=0}^{m}{m \choose i}\left(p \ell_{n-1}\right)^{i}\left\{(1-q)\left(1-\ell_{n-1}\right)\right\}^{m-i}\chi(m)\nonumber\\
&= 1 - \sum_{m=0}^{\infty}\left\{(1-q) - (1-p-q)\ell_{n-1}\right\}^{m}\chi(m) = 1 - G\left((1-q) - (1-p-q)\ell_{n-1}\right).\label{GW_recurrence_1}
\end{align}
On the other hand, given that $\phi$ has children $v_{1}, \ldots, v_{m}$, it belongs to $L_{n}$ if and only if 
\begin{enumerate}
\item the edge $(\phi,v_{\ell})$ is \emph{not} a target for \emph{any} $1 \leqslant \ell \leqslant m$,
\item for every $v_{\ell} \in L_{n-1}$, the edge $(\phi,v_{\ell})$ is a trap (since otherwise, $A$ can move the token, in the first round, from $\phi$ to a $v_{\ell}$ such that $v_{\ell} \in L_{n-1}$ and $(\phi,v_{\ell})$ is safe, and thus win the game),
\item for every $v_{\ell} \in D_{n-1}$, the edge $(\phi,v_{\ell})$ is a trap (since otherwise, $A$ can move the token, in the first round, from $\phi$ to a $v_{\ell}$ such that $v_{\ell} \in D_{n-1}$ and $(\phi,v_{\ell})$ is safe, and thus make the game last at least $n+1$ rounds).
\end{enumerate}
Note that $\phi \in L_{n}$ automatically if $\phi$ is childless. We thus have
\begin{align}
\ell_{n} &= \sum_{m=0}^{\infty}\sum_{i,j \in \mathbb{N}_{0}:i+j \leqslant m}p^{i+j}(1-q)^{m-i-j}\Prob\left[i \text{ children are in } L_{n-1} \text{ and } j \text{ in } D_{n-1}, \text{ out of } v_{1}, \ldots, v_{m}\right]\chi(m)\nonumber\\
&= \sum_{m=0}^{\infty}\sum_{i,j \in \mathbb{N}_{0}:i+j \leqslant m} p^{i+j}(1-q)^{m-i-j} {m \choose i} {m-i \choose j} \ell_{n-1}^{i} \left(1-\ell_{n-1}-w_{n-1}\right)^{j} w_{n-1}^{m-i-j} \chi(m)\nonumber\\
&= \sum_{m=0}^{\infty}\sum_{i,j \in \mathbb{N}_{0}:i+j \leqslant m}\frac{m!}{i! j! (m-i-j)!}\left(p \ell_{n-1}\right)^{i} \left(p - p \ell_{n-1} - p w_{n-1}\right)^{j} \left\{(1-q)w_{n-1}\right\}^{m-i-j}\chi(m)\nonumber\\
&= \sum_{m=0}^{\infty}\left\{p + (1-p-q)w_{n-1}\right\}^{m}\chi(m) = G\left(p + (1-p-q)w_{n-1}\right).\label{GW_recurrence_2}
\end{align}  

Combining \eqref{GW_recurrence_1} and \eqref{GW_recurrence_2}, and introducing the change of variable $w'_{n} = p + (1-p-q)w_{n}$ for each $n \in \mathbb{N}_{0}$, we obtain
\begin{align}
w'_{n+1} &= p + (1-p-q) - (1-p-q)G\left((1-q) - (1-p-q)G\left(p + (1-p-q)w_{n-1}\right)\right)\nonumber\\
&= (1-q) - (1-p-q)G\left((1-q) - (1-p-q)G\left(w'_{n-1}\right)\right) = g_{p,q}^{(2)}(w'_{n-1}),\label{GW_recurrence_3}
\end{align}
where the function $g_{p,q}$ is as defined in \eqref{GW_g_defn}. Taking the limits of both sides of \eqref{GW_recurrence_3}, as $n \rightarrow \infty$, and using \eqref{w_{n}_ell_{n}_limits}, we see that $w' = p + (1-p-q)w$ is a fixed point of $g_{p,q}^{(2)}$ in $[0,1]$. 

\subsection{Proofs of parts \eqref{main_GW_bond_1} and \eqref{main_GW_bond_2} of Theorem~\ref{thm:main_GW_bond}}\label{subsec:parts_1_2_bond_GW}
First, introducing the change of variable $\ell'_{n} = (1-p-q)\ell_{n}$, for every $n \in \mathbb{N}_{0}$, and using \eqref{GW_recurrence_2} and \eqref{GW_g_defn}, we have
\begin{align}
\ell'_{n} = (1-p-q)\ell_{n} = (1-p-q)G\left(w'_{n-1}\right) = (1-q) - g_{p,q}\left(w'_{n-1}\right).\label{GW_recurrence_4}
\end{align}
Taking the limits of both sides of \eqref{GW_recurrence_4} and using \eqref{w_{n}_ell_{n}_limits}, we see that $\ell' = (1-p-q)\ell$ satisfies the relation $\ell' = (1-q) - g_{p,q}(w')$. This proves part \eqref{main_GW_bond_2} of Theorem~\ref{thm:main_GW_bond}.

Next, we prove part \eqref{main_GW_bond_1} of Theorem~\ref{thm:main_GW_bond},  i.e.\ that $w'$ equals the minimum positive fixed point of $g_{p,q}^{(2)}$. To begin with, we make a couple of observations that are going to be useful to us throughout this paper (even where we consider specific probability distributions in place of $\chi$). The first of these provides strictly positive lower bounds on $g_{p,q}^{(2)}(0)$, for various values of $(p,q)$. Note that 
\begin{align}
g_{p,q}^{(2)}(0) = (1-q) - (1-p-q)G[(1-q) - (1-p-q)G(0)].\nonumber
\end{align}
When $p > 0$, since $G[(1-q) - (1-p-q)G(0)] \leqslant 1$, we have 
\begin{align}
g_{p,q}^{(2)}(0) \geqslant (1-q) - (1-p-q) = p > 0.\label{g_{p,q}^{(2)}(0)_lower_bound_1}
\end{align}
On the other hand, when $p=0$, we have $1 > q > 0$ (since $(p,q) \in I$, with $I$ as defined in \eqref{I_defn}), so that
\begin{align}
g_{p,q}^{(2)}(0) &= (1-q) - (1-q)G[(1-q) - (1-q)G(0)] = (1-q)\left\{1 - G[(1-q)(1-\chi(0))]\right\},\label{g_{p,q}^{(2)}(0)_lower_bound_2}
\end{align}
and this is strictly positive because $1-q > 0$ and $G[(1-q)(1-\chi(0))] < 1$ (since $(1-q)(1-\chi(0)) < 1$, because $1-q < 1$). The second observation provides an upper bound on $g_{p,q}^{(2)}(1)$ that is strictly less than $1$, as follows:
\begin{align}
g_{p,q}^{(2)}(1) = (1-q) - (1-p-q)G[(1-q) - (1-p-q)G(1)] = (1-q) - (1-p-q)G(p).\nonumber
\end{align}
When $p > 0$, we have $G(p) > 0$, and by \eqref{I_defn}, we have $1-p-q > 0$. Therefore, in this case, 
\begin{align}
g_{p,q}^{(2)}(1) \leqslant 1 - (1-p-q)G(p) < 1.\label{g_{p,q}^{(2)}(1)_upper_bound_1}
\end{align}
When $p=0$, we must have $q > 0$ (since $(p,q) \in I$, with $I$ as defined in \eqref{I_defn}). Moreover, $(1-p-q)G(p) = (1-q)\chi(0) \geqslant 0$. Thus
\begin{align}
g_{p,q}^{(2)}(1) \leqslant 1-q < 1.\label{g_{p,q}^{(2)}(1)_upper_bound_2}
\end{align}
Note, crucially, that the bounds obtained from \eqref{g_{p,q}^{(2)}(0)_lower_bound_1}, \eqref{g_{p,q}^{(2)}(0)_lower_bound_2}, \eqref{g_{p,q}^{(2)}(1)_upper_bound_1} and \eqref{g_{p,q}^{(2)}(1)_upper_bound_2} together make sure that \emph{neither} $0$ \emph{nor} $1$ is a fixed point of $g_{p,q}^{(2)}$ as long as $(p,q) \in I$ with $I$ as defined in \eqref{I_defn}.

Let $\gamma \in (0,1)$ be a fixed point of $g_{p,q}^{(2)}$. Note that 
\begin{align}
\frac{d}{dx}g_{p,q}^{(2)}(x) &= (1-p-q)^{2}G'\left((1-q) - (1-p-q)G(x)\right) G'(x),\label{GW_g_derivative}
\end{align}
and this is strictly positive on $(0,1]$ since $G'(x) = \sum_{m=1}^{\infty}m \chi(m) x^{m-1} > 0$ for all $x \in (0,1]$ (this strict inequality is ensured by our assumption that $0 \leqslant \chi(0) < 1$, which implies that there exists at least one $m \in \mathbb{N}$ with $\chi(m) > 0$). Therefore, $g_{p,q}^{(2)}$ is strictly increasing on $[0,1]$, so that using \eqref{g_{p,q}^{(2)}(0)_lower_bound_1} and recalling that $w_{0}' = p + (1-p-q)w_{0} = p$ since $w_{0} = 0$, we have 
\begin{align}
\gamma &= g_{p,q}^{(2)}(\gamma) > g_{p,q}^{(2)}(0) \geqslant p = w'_{0}.\nonumber 
\end{align} 
We iteratively apply $g_{p,q}^{(2)}$ to both sides of the above inequality, and make use of the increasing nature of $g_{p,q}^{(2)}$ as well as \eqref{GW_recurrence_3}, leading to
\begin{align}
\gamma = \left(g_{p,q}^{(2)}\right)^{(n)}(\gamma) \geqslant \left(g_{p,q}^{(2)}\right)^{(n)}(w'_{0}) = w'_{2n}.\nonumber
\end{align}
Upon taking the limits of both sides as $n \rightarrow \infty$, and using \eqref{w_{n}_ell_{n}_limits}, we deduce that $\gamma \geqslant w'$. This shows that, indeed, $w'$ is the minimum positive fixed point of $g_{p,q}^{(2)}$.

\subsection{Proof of part \eqref{main_GW_bond_3} of Theorem~\ref{thm:main_GW_bond}}\label{subsec:part_3_GW_bond}
The probability of the event that the root $\phi$ belongs to $D$ is $0$, i.e.\ the probability of the event that the bond percolation game that starts from the root of $T_{\chi}$ results in a draw is $0$, if and only if 
\begin{align}
w + \ell = 1 \Longleftrightarrow w'+\ell' = (1-q) \Longleftrightarrow w' = g_{p,q}(w'),\label{GW_draw_probab_0_cond}
\end{align}
i.e.\ $w'$ is a fixed point of $g_{p,q}$. Note, from \eqref{GW_g_defn}, that the derivative $g'_{p,q}(x) = -(1-p-q)G'(x)$ is strictly negative for all $x \in (0,1]$, so that $g_{p,q}$ is strictly decreasing on $[0,1]$. Note that $g_{p,q}(0) > p \geqslant 0$ since $0 \leqslant G(0) = \chi(0) < 1$, whereas $g_{p,q}(1) = p < 1$. Therefore, the curve $y=g_{p,q}(x)$ lies above the line $y=x$ at $x=0$, and below the line $y=x$ at $x=1$. We conclude, therefore, that $g_{p,q}$ has a \emph{unique} fixed point in $[0,1]$, and henceforth, we denote it by $\alpha_{p,q}$. It is obvious that $\alpha_{p,q}$ is also a fixed point of $g_{p,q}^{(2)}$. From \eqref{GW_draw_probab_0_cond}, we conclude that the probability of draw is $0$ if and only if $w' = \alpha_{p,q}$.

Recall, from above, that we have already shown $w'$ to be the smallest positive fixed point of $g_{p,q}^{(2)}$. We now state and prove the following lemma that is key to much of our analysis in this paper:
\begin{lemma}\label{lem:draw_probab_0_only_if_unique_fixed_point}
The minimum positive fixed point $w'$ of $g_{p,q}^{(2)}$ equals $\alpha_{p,q}$, the unique fixed point of $g_{p,q}$ in $[0,1]$, if and only if $g_{p,q}^{(2)}$ has a unique fixed point in $[0,1]$.
\end{lemma}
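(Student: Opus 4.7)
The plan is to exploit two structural facts established earlier in the section. First, since $g_{p,q}'(x) = -(1-p-q)G'(x) < 0$ on $(0,1]$, the map $g_{p,q}$ is strictly decreasing, hence injective, and therefore admits the single fixed point $\alpha_{p,q}$ in $[0,1]$. In particular, $\alpha_{p,q}$ is always a fixed point of $g_{p,q}^{(2)}$. Second, the bounds \eqref{g_{p,q}^{(2)}(0)_lower_bound_1}, \eqref{g_{p,q}^{(2)}(0)_lower_bound_2}, \eqref{g_{p,q}^{(2)}(1)_upper_bound_1} and \eqref{g_{p,q}^{(2)}(1)_upper_bound_2} together guarantee that neither $0$ nor $1$ can be a fixed point of $g_{p,q}^{(2)}$, so every fixed point of $g_{p,q}^{(2)}$ in $[0,1]$ is automatically \emph{positive} and lies in the open interval $(0,1)$.

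For the ``if'' direction (uniqueness of the fixed point of $g_{p,q}^{(2)}$ forces $w' = \alpha_{p,q}$), the argument is immediate: $\alpha_{p,q}$ is a fixed point of $g_{p,q}^{(2)}$, and so is $w'$ by part \eqref{main_GW_bond_1} of Theorem~\ref{thm:main_GW_bond}; uniqueness collapses the two into one.

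For the ``only if'' direction I would argue by contrapositive. Suppose $g_{p,q}^{(2)}$ admits some fixed point $\beta \in [0,1]$ with $\beta \neq \alpha_{p,q}$. The key construction is to pair $\beta$ with $\gamma := g_{p,q}(\beta)$ and observe that
\begin{align}
g_{p,q}^{(2)}(\gamma) = g_{p,q}\bigl(g_{p,q}(\gamma)\bigr) = g_{p,q}\bigl(g_{p,q}^{(2)}(\beta)\bigr) = g_{p,q}(\beta) = \gamma,\nonumber
\end{align}
so $\gamma$ is itself a fixed point of $g_{p,q}^{(2)}$ lying in $[0,1]$ (because $g_{p,q}$ maps $[0,1]$ into $[0,1]$). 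Injectivity of $g_{p,q}$ gives $\gamma \neq \alpha_{p,q}$, and strict monotonicity forces the pair $\{\beta, \gamma\}$ to straddle $\alpha_{p,q}$: if $\beta > \alpha_{p,q}$ then $\gamma = g_{p,q}(\beta) < g_{p,q}(\alpha_{p,q}) = \alpha_{p,q}$, and symmetrically if $\beta < \alpha_{p,q}$. Consequently at least one of $\beta, \gamma$ is a \emph{positive} fixed point of $g_{p,q}^{(2)}$ strictly below $\alpha_{p,q}$, so the minimum positive fixed point $w'$ must satisfy $w' < \alpha_{p,q}$, contradicting $w' = \alpha_{p,q}$.

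The analytical content is minimal; the main conceptual hurdle is recognizing that, because $g_{p,q}$ is decreasing, any fixed point of $g_{p,q}^{(2)}$ that is not a fixed point of $g_{p,q}$ must occur in a $2$-cycle $\{\beta, g_{p,q}(\beta)\}$ symmetric about $\alpha_{p,q}$. Once this pairing is in hand, the existence of an additional fixed point of $g_{p,q}^{(2)}$ automatically produces one below $\alpha_{p,q}$, which is exactly what is needed to separate $w'$ from $\alpha_{p,q}$.
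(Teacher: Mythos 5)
Your proof is correct and follows essentially the same route as the paper's: both hinge on the observation that applying $g_{p,q}$ to a fixed point $\beta \neq \alpha_{p,q}$ of $g_{p,q}^{(2)}$ yields another fixed point of $g_{p,q}^{(2)}$ on the opposite side of $\alpha_{p,q}$, producing a positive fixed point strictly below $\alpha_{p,q}$ and hence forcing $w' < \alpha_{p,q}$. The only cosmetic differences are that you phrase the non-trivial direction as a contrapositive rather than a direct contradiction starting from $w' = \alpha_{p,q}$, and that you deduce positivity of $g_{p,q}(\beta)$ from the previously established fact that $0$ is not a fixed point of $g_{p,q}^{(2)}$, whereas the paper verifies $g_{p,q}(\beta) > 0$ by an explicit computation.
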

\begin{proof}
We prove only the implication that is not obvious. As $w'$ is the minimum positive fixed point of $g_{p,q}^{(2)}$, if $\beta \in (0,1)$ is \emph{any other} fixed point of $g_{p,q}^{(2)}$, we must have $w' < \beta$. If, furthermore, $w'$ equals $\alpha_{p,q}$, then we must have $w' = \alpha_{p,q} < \beta$. Since $g_{p,q}$ is strictly decreasing on $[0,1]$, as shown above, we have 
\begin{equation}
w' = \alpha_{p,q} = g_{p,q}(\alpha_{p,q}) > g_{p,q}(\beta).\label{eq:lem_unique_1}
\end{equation}
Note that, since $\beta$ is a fixed point of $g_{p,q}^{(2)}$,
\begin{equation}
\beta = g_{p,q}^{(2)}(\beta) \implies g_{p,q}(\beta) = g_{p,q}\left(g_{p,q}^{(2)}(\beta)\right) = g_{p,q}^{(2)}(g_{p,q}(\beta)).\label{eq:lem_unique_2}
\end{equation} 
By \eqref{eq:lem_unique_2}, it is evident that $g_{p,q}(\beta)$ is a fixed point of $g_{p,q}^{(2)}$. We emphasize to the reader here that $g_{p,q}(\beta)$ is strictly positive because, by \eqref{GW_g_defn}, 
\begin{align}
g_{p,q}(\beta) = (1-q) - (1-p-q)G(\beta) \geqslant (1-q) - (1-p-q) = p > 0 \text{ when } p > 0,\nonumber
\end{align}
whereas when $p=0$, we have $0 < q < 1$ (by definition of $I$ in \eqref{I_defn}), and $\beta < 1 \implies G(\beta) < 1$, so that
\begin{align}
g_{p,q}(\beta) = (1-q) - (1-q)G(\beta) = (1-q)[1-G(\beta)] > 0.\nonumber
\end{align}
Thus, $g_{p,q}(\beta)$ is a positive fixed point of $g_{p,q}^{(2)}$, and by \eqref{eq:lem_unique_1}, it is strictly less than $w'$, the smallest positive fixed point of $g_{p,q}^{(2)}$, thus leading to a contradiction. This completes the proof.
\end{proof}

\section{The site percolation game on the rooted Galton-Watson tree}\label{sec:site_GW}
There are two ways to analyze the site percolation game on $T_{\chi}$. The first is of the same flavour as the approach adopted in \S\ref{sec:bond_GW}, and we outline it briefly below, whereas the second has been detailed in \S\ref{subsec:bond_site_connection}. Assuming that $A$ plays the first round of the game, we let 
\begin{enumerate}
\item $\widetilde{W}$ denote the subset comprising those vertices $v$ of $T_{\chi}$ such that the game starting with the token at $v$ is won by $A$,
\item $\widetilde{L}$ denote the subset comprising those vertices $v$ of $T_{\chi}$ such that the game starting with the token at $v$ is lost by $A$,
\item $\widetilde{D}$ denote the subset comprising those vertices $v$ of $T_{\chi}$ such that the game starting with the token at $v$ results in a draw.
\end{enumerate}
In particular, if the vertex $v$ has been labeled a trap, we let $v \in \widetilde{W}$, and if $v$ has been labeled a target, we let $v \in \widetilde{L}$. The intuition behind these conventions is as follows: we imagine an \emph{unseen} round that takes place before the actual game (which starts with the token at $v$) begins, during which player $B$ moves the token from somewhere else to the vertex $v$. If $v$ is a trap, $B$ loses immediately, allowing $A$ to win, and if $v$ is a target, then $B$ wins immediately, forcing $A$ to lose even before the actual game truly begins. 

We let $\widetilde{W}_{0} = \widetilde{L}_{0} = \emptyset$, and for every $n \in \mathbb{N}$, we define $\widetilde{W}_{n} \subset \widetilde{W}$ and $\widetilde{L}_{n} \subset \widetilde{L}$ to be the subsets comprising all those vertices $v$ of $T_{\chi}$ such that the game that begins with the token at $v$ lasts for less than $n$ rounds, while $\widetilde{D}_{n}$ denotes the subset comprising all those vertices $v$ of $T_{\chi}$ such that the game that begins with the token at $v$ lasts for at least $n$ rounds. Note the crucial difference between the definition of $\widetilde{W}_{n}$ and that of $W_{n}$ in \S\ref{sec:bond_GW} (likewise, between $L_{n}$ and $\widetilde{L}_{n}$, and between $D_{n}$ and $\widetilde{D}_{n}$) -- for $v \in W_{n}$, the game that begins with the token at $v$ lasts for at most $n$ rounds, whereas for $v \in \widetilde{W}_{n}$, the game that begins with the token at $v$ lasts for at most $n-1$ rounds. 

We let $\widetilde{w}_{n}$, $\widetilde{w}$, $\widetilde{\ell}_{n}$ and $\widetilde{\ell}$ denote the probabilities of the root $\phi$ of $T_{\chi}$ belonging to $\widetilde{W}_{n}$, $\widetilde{W}$, $\widetilde{L}_{n}$ and $\widetilde{L}$ respectively. A lemma analogous to Lemma~\ref{lem:compactness} yields $\widetilde{w}_{n} \uparrow \widetilde{w}$ and $\widetilde{\ell}_{n} \uparrow \widetilde{\ell}$ as $n \rightarrow \infty$.

For $n \in \mathbb{N}$, the root $\phi$ belongs to $\widetilde{W}_{n}$ if and only if either $\phi$ is itself a trap, or it is safe \emph{and} has at least one child that belongs to $\widetilde{L}_{n-1}$. Consequently,
\begin{align}
\widetilde{w}_{n} &= p + (1-p-q)\sum_{m=0}^{\infty}\left\{1 - \left(1-\widetilde{\ell}_{n-1}\right)^{m}\right\}\chi(m) = (1-q) - (1-p-q)G\left(1-\widetilde{\ell}_{n-1}\right).\label{site_recurrence_1}
\end{align} 
The root $\phi$ belongs to $\widetilde{L}_{n}$ if and only if either $\phi$ itself is a target, or it is safe \emph{and} each of its children is in $W_{n-1}$, leading to
\begin{align}
\widetilde{\ell}_{n} &= q + (1-p-q)\sum_{m=0}^{\infty}\widetilde{w}_{n-1}^{m}\chi(m) = q + (1-p-q)G\left(\widetilde{w}_{n-1}\right).\label{site_recurrence_2}
\end{align}
Combining \eqref{site_recurrence_1} and \eqref{site_recurrence_2}, and using \eqref{GW_g_defn}, we obtain: $\widetilde{w}_{n+1} = g_{p,q}^{(2)}\left(\widetilde{w}_{n-1}\right)$, for each $n \in \mathbb{N}$. Arguing as in \S\ref{subsec:parts_1_2_bond_GW}, we conclude that $\widetilde{w} = \lim_{n \rightarrow \infty}\widetilde{w}_{n}$ is the smallest positive fixed point of $g_{p,q}^{(2)}$, and \eqref{site_recurrence_2}) yields $\widetilde{\ell} = q + (1-p-q)G(\widetilde{w})$. This completes the proofs of parts \eqref{main_GW_site_1} and \eqref{main_GW_site_2} of Theorem~\ref{thm:main_GW_site}.

The probability of draw is $0$ if and only if $\widetilde{w} + \widetilde{\ell} = 1$, which is equivalent to $\widetilde{w} = g_{p,q}\left(\widetilde{w}\right)$. By Lemma~\ref{lem:draw_probab_0_only_if_unique_fixed_point}, we conclude that this is possible if and only if $g_{p,q}^{(2)}$ has a unique fixed point in $[0,1]$, completing the proof of part \eqref{main_GW_site_3} of Theorem~\ref{thm:main_GW_site}. This, in turn, tells us that the set of values of the parameter-pair $(p,q)$ for which the probability of draw in the bond percolation game is $0$ is precisely the set of values of $(p,q)$ for which the probability of draw in the site percolation game is $0$, when both are played on the rooted GW tree $T_{\chi}$.

\subsection{A different method for analyzing the site percolation game}\label{subsec:bond_site_connection}
The second approach for analyzing the site percolation game played on $T_{\chi}$ relies on an intimate connection that this game bears with the bond percolation game played on $T_{\chi}$. We denote by $(T,\omega)$ a realization $T$ of $T_{\chi}$ in which $\omega((u,v))$ denotes the label (trap, target or safe) assigned to $(u,v)$ for every directed edge $(u,v)$ of $T$. Given a $(T,\omega)$, we define a (deterministic) assignment $\widetilde{\omega}$ of trap / target / safe labels to the vertices of $T$, as follows: the root $\phi$ of $T$ is labeled safe under $\widetilde{\omega}$, and given any vertex $v$ of $T$ that is not the root $\phi$, letting $u$ be the parent of $v$, 
\begin{enumerate}
\item $v$ is labeled a trap under $\widetilde{\omega}$ if the directed edge $(u,v)$ has been labeled a trap under $\omega$, 
\item $v$ is labeled a target under $\widetilde{\omega}$ if the directed edge $(u,v)$ has been labeled a target under $\omega$,
\item $v$ is labeled safe under $\widetilde{\omega}$ if the directed edge $(u,v)$ has been labeled safe under $\omega$.
\end{enumerate} 
The way we have constructed $\widetilde{\omega}$ allows us to conclude that $\phi$ belongs to $\widetilde{W}$ under $\widetilde{\omega}$ if and only if it belongs to $W$ under $\omega$, and $\phi$ belongs to $\widetilde{L}$ under $\widetilde{\omega}$ if and only if it belongs to $L$ under $\omega$.

From above, we conclude that the event $\phi \in \widetilde{W}$ (in the site percolation game) has the same probability as the union of the following two disjoint events:
\begin{enumerate}
\item $\phi$ has been labeled a trap, which happens with probability $p$,
\item $\phi$ is safe and $\phi \in W$ (in the bond percolation game), which happens with probability $(1-p-q) w$,
\end{enumerate}
thus yielding $\widetilde{w} = p + (1-p-q)w = w'$ (where $w'$ is the transformation of $w$ as defined in the statement of Theorem~\ref{thm:main_GW_bond}). Likewise, the event $\phi \in \widetilde{L}$ (in the site percolation game) has the same probability as the union of the following two disjoint event:
\begin{enumerate}
\item $\phi$ has been labeled a target, which happens with probability $q$,
\item $\phi$ is safe and $\phi \in L$ (in the bond percolation game), which happens with probability $(1-p-q)\ell$,
\end{enumerate}
thus yielding $\widetilde{\ell} = q + (1-p-q)\ell = q + \ell' = 1 - g_{p,q}(w') = q + (1-p-q)G(\widetilde{w})$, from Theorem~\ref{thm:main_GW_bond} and using \eqref{GW_g_defn} and the fact, established above, that $\widetilde{w} = w'$. Finally, the event that $\phi \in \widetilde{D}$ (in the site percolation game) is the same as the event that $\phi$ is safe and $\phi \in D$ (in the bond percolation game), thus implying that the probability of draw in the site percolation game is $0$ if and only if the probability of draw in the bond percolation game is $0$. This completes the proofs of all parts of Theorem~\ref{thm:main_GW_site}.

\section{When $\chi$ belongs to one of several specified classes of distributions}\label{sec:specific_offspring}
\subsection{A general outline for the proof techniques adopted in the subsequent subsections}\label{sec:gen_tech}
In the sections that follow, we focus on the probability of draw in the bond percolation game played on $T_{\chi}$, where $\chi$ is specified to be one of the most commonly studied probability distributions supported on $\mathbb{N}_{0}$. In each case, a set of rather \emph{ad hoc} computations are necessary, but broadly speaking, the underlying argument remains the same, and this is what we outline in \S\ref{sec:gen_tech}.

To begin with, we compute the first and second derivatives of the function $g_{p,q}^{(2)}$, with $g_{p,q}$ as defined in \eqref{GW_g_defn} (with the probability generating function $G$ now being that of the $\chi$ under consideration), and in each case, we find that there exists some unique $x_{p,q} > 0$ such that
\begin{enumerate}
\item $g_{p,q}^{(2)}$ is strictly convex, and equivalently, its derivative is strictly increasing, for $0 \leqslant x < x_{p,q}$,
\item $g_{p,q}^{(2)}$ is strictly concave, and equivalently, its derivative is strictly decreasing, for $x > x_{p,q}$.
\end{enumerate}
The maximum of the derivative of $g_{p,q}^{(2)}(x)$, for $x > 0$, is thus attained at $x = x_{p,q}$. It is important to note here (and it will be emphasized in the subsequent sections as well) that the probability generating function $G$ of $\chi$ may be defined on a domain $\mathcal{D} \subset \mathbb{R}^{+}$ that is possibly larger than the interval $[0,1]$ (i.e.\ $[0,1] \subset \mathcal{D}$), but $x_{p,q}$ will always be contained inside $\mathcal{D}$. We can thus write
\begin{equation}
\max_{x \in D}\frac{d}{dx}g_{p,q}^{(2)}(x) = \frac{d}{dx}g_{p,q}^{(2)}(x)\Big|_{x=x_{p,q}}.\label{max_derivative_GW}
\end{equation}

Next, we deduce the condition (involving $p$, $q$ and the underlying parameters of the $\chi$ under consideration) that is equivalent to 
\begin{equation}
\frac{d}{dx}g_{p,q}^{(2)}(x)\big|_{x=x_{p,q}} \leqslant 1.\label{max_derivative_GW_leq_1}
\end{equation}
Note that when \eqref{max_derivative_GW_leq_1} holds, the derivative of $g_{p,q}^{(2)}$, and hence the slope of the curve $y=g_{p,q}^{(2)}(x)$, is bounded above by $1$ throughout $\mathcal{D}$, and hence throughout $[0,1]$. This gives rise to the following lemma:
\begin{lemma}\label{lem:max_derivative_leq_1}
As long as \eqref{max_derivative_GW_leq_1} holds, the function $g_{p,q}^{(2)}$ has a unique fixed point in $[0,1]$.
\end{lemma}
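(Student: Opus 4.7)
The plan is to rule out the possibility of $g_{p,q}^{(2)}$ having more than one fixed point in $[0,1]$ by showing that the function $h(x) = g_{p,q}^{(2)}(x) - x$ is strictly decreasing on $[0,1]$ under the hypothesis \eqref{max_derivative_GW_leq_1}, and then combining this with the boundary information $h(0) > 0$ and $h(1) < 0$ already established in \S\ref{subsec:parts_1_2_bond_GW} via \eqref{g_{p,q}^{(2)}(0)_lower_bound_1}--\eqref{g_{p,q}^{(2)}(1)_upper_bound_2}.

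First I would recall, from the convex-then-concave structure described just before \eqref{max_derivative_GW}, that the derivative $\tfrac{d}{dx} g_{p,q}^{(2)}$ is \emph{strictly} increasing on $[0, x_{p,q}]$ and \emph{strictly} decreasing on $[x_{p,q}, \infty) \cap \mathcal{D}$. Hence this derivative attains its maximum on $[0,1]$ at the single point $x = x_{p,q}$ (or is strictly monotone on $[0,1]$ in the degenerate case $x_{p,q} \notin [0,1]$), with strict inequality at every other point of $[0,1]$. Combined with \eqref{max_derivative_GW_leq_1}, this yields $\tfrac{d}{dx} g_{p,q}^{(2)}(x) \leqslant 1$ for every $x \in [0,1]$, with equality possible at most at the single point $x_{p,q}$. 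Therefore $h'(x) = \tfrac{d}{dx} g_{p,q}^{(2)}(x) - 1 \leqslant 0$ throughout $[0,1]$ and is strictly negative except possibly at $x_{p,q}$, which is enough to force $h$ to be strictly decreasing on $[0,1]$.

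Since $h(0) = g_{p,q}^{(2)}(0) > 0$ and $h(1) = g_{p,q}^{(2)}(1) - 1 < 0$ by \eqref{g_{p,q}^{(2)}(0)_lower_bound_1}--\eqref{g_{p,q}^{(2)}(1)_upper_bound_2}, the intermediate value theorem combined with strict monotonicity of $h$ produces exactly one zero in $(0,1)$, which is the unique fixed point of $g_{p,q}^{(2)}$ in $[0,1]$. I do not anticipate any serious obstacle here; the only subtle point is the borderline case in which $\tfrac{d}{dx} g_{p,q}^{(2)}(x_{p,q}) = 1$, where one must take care to use the \emph{strict} convexity/concavity (rather than merely $h' \leqslant 0$) to get strict monotonicity of $h$. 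An equivalent formulation by contradiction would assume two fixed points $\alpha < \beta$ in $[0,1]$, apply the mean value theorem to find $c \in (\alpha,\beta)$ with $\tfrac{d}{dx} g_{p,q}^{(2)}(c) = 1$, and then derive a contradiction with \eqref{max_derivative_GW_leq_1} via the fact that the derivative cannot be constantly equal to $1$ on any subinterval (again by strict convexity/concavity); but the monotonicity argument outlined above is cleaner.
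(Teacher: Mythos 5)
Your proof is correct and follows essentially the same route as the paper's: both arguments use \eqref{max_derivative_GW_leq_1} to cap the slope of $y = g_{p,q}^{(2)}(x)$ at $1$ throughout $[0,1]$ and combine this with the endpoint signs from \eqref{g_{p,q}^{(2)}(0)_lower_bound_1}--\eqref{g_{p,q}^{(2)}(1)_upper_bound_2} to forbid a second crossing of $y = x$; you package this as strict monotonicity of $h(x) = g_{p,q}^{(2)}(x) - x$, while the paper derives a contradiction at a hypothetical second fixed point $\beta$. Your explicit appeal to the strict convex-then-concave structure to exclude the tangency/borderline case (the derivative equalling $1$ on more than the single point $x_{p,q}$) is in fact slightly more careful than the paper's assertion that the slope must strictly exceed $1$ at $\beta$.
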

\begin{proof}
As established via \eqref{g_{p,q}^{(2)}(0)_lower_bound_1} and \eqref{g_{p,q}^{(2)}(0)_lower_bound_2}, the curve $y=g_{p,q}^{(2)}(x)$ lies above $y=x$ at $x=0$. Since $w'$ is the smallest positive fixed point of $g_{p,q}^{(2)}$, from \eqref{main_GW_bond_1} of Theorem~\ref{thm:main_GW_bond}, it must be the case that the curve $y=g_{p,q}^{(2)}(x)$ travels from \emph{above} $y=x$ to \emph{beneath} $y=x$ at $x=w'$. If there exists yet another fixed point $\beta \in (0,1)$ of $g_{p,q}^{(2)}$, then $w' < \beta$. Let us assume that there exists no fixed point of $g_{p,q}^{(2)}$ in the sub-interval $(w',\beta)$. This means that the curve $y=g_{p,q}^{(2)}(x)$ must travel from \emph{beneath} $y=x$ to \emph{above} $y=x$ at $x=\beta$, and therefore, the slope of $y=g_{p,q}^{(2)}(x)$ must exceed the slope of $y=x$ at $x=\beta$. But the slope of $y=x$ equals $1$, and by \eqref{max_derivative_GW_leq_1}, the slope of $y=g_{p,q}^{(2)}(x)$ is bounded above by $1$. This leads to a contradiction, implying that the fixed point $\beta$ cannot exist in this case.
\end{proof}

The next major step in the argument that is common to all the subsequent sections is showing that $w' \leqslant x_{p,q}$ for \emph{all} values of $(p,q)$ in $I$, where $I$ is as defined in \eqref{I_defn}. Recalling from \eqref{g_{p,q}^{(2)}(0)_lower_bound_1} and \eqref{g_{p,q}^{(2)}(0)_lower_bound_2} that the curve $y=g_{p,q}^{(2)}(x)$ lies above $y=x$ at $x=0$, and from \eqref{main_GW_bond_1} of Theorem~\ref{thm:main_GW_bond} that $x=w'$ is the smallest positive value of $x$ at which $y=g_{p,q}^{(2)}(x)$ intersects $y=x$, it suffices to show, via the intermediate value theorem (which can be applied here because $g_{p,q}^{(2)}$ is continuous), that $y=g_{p,q}^{(2)}(x)$ lies \emph{beneath} or \emph{on} $y=x$ at $x=x_{p,q}$, i.e.\ that
\begin{equation}\label{w'_leq_x_{p,q}}
g_{p,q}^{(2)}(x_{p,q}) \leqslant x_{p,q} \text{ for all } (p,q) \in I.
\end{equation}

The final step is to show that 
\begin{equation}\label{x_{p,q}<alpha_{p,q}}
x_{p,q} < \alpha_{p,q} \text{ whenever \eqref{max_derivative_GW_leq_1} does \emph{not} hold},
\end{equation}
where we recall, from \S\ref{subsec:part_3_GW_bond}, that $\alpha_{p,q}$ is the unique fixed point of $g_{p,q}$ in $[0,1]$. From \eqref{GW_g_defn}, it is evident that $g_{p,q}$ is strictly decreasing on $[0,1]$, and the curve $y=g_{p,q}(x)$ lies above the line $y=x$ at $x=0$. Since $x=\alpha_{p,q}$ is the first and only value of $x$ (in $[0,1]$) where the curve $y=g_{p,q}(x)$ intersects $y=x$, if we can show that 
\begin{equation}\label{g_{p,q}(x_{p,q})>x_{p,q}}
g_{p,q}(x_{p,q}) > x_{p,q} \text{ whenever \eqref{max_derivative_GW_leq_1} does not hold},
\end{equation}
then it would go to show that \eqref{x_{p,q}<alpha_{p,q}} is true.

Immediately, from \eqref{w'_leq_x_{p,q}} and \eqref{x_{p,q}<alpha_{p,q}}, we conclude that $g_{p,q}^{(2)}$ does \emph{not} have a unique fixed point when \eqref{max_derivative_GW_leq_1} does not hold. This, along with Lemma~\ref{lem:max_derivative_leq_1} and part \eqref{main_GW_bond_3} of Theorem~\ref{thm:main_GW_bond}, allows us to conclude that
\begin{theorem}
The probability of draw in the bond percolation game played on $T_{\chi}$ is $0$ if and only if \eqref{max_derivative_GW_leq_1} holds, where $\chi$ belongs to the list of probability distributions considered in the rest of this paper.
\end{theorem}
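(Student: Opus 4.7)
The plan is to execute the roadmap sketched in \S\ref{sec:gen_tech} verbatim, with the distribution-specific computations deferred to the subsequent sections. The argument splits into a short ``if'' direction and a longer ``only if'' direction, and I would prove each separately.

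For the ``if'' direction, assuming \eqref{max_derivative_GW_leq_1} holds, Lemma~\ref{lem:max_derivative_leq_1} immediately yields that $g_{p,q}^{(2)}$ has a unique fixed point in $[0,1]$, and part \eqref{main_GW_bond_3} of Theorem~\ref{thm:main_GW_bond} then gives draw probability zero. No distribution-specific input is needed here.

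For the ``only if'' direction, the plan is to contrapose: assuming \eqref{max_derivative_GW_leq_1} fails, I would produce two distinct fixed points of $g_{p,q}^{(2)}$ in $[0,1]$ by sandwiching $x_{p,q}$ strictly between $w'$ and $\alpha_{p,q}$. First I would establish \eqref{w'_leq_x_{p,q}}. Since $g_{p,q}^{(2)}(0) > 0$ by \eqref{g_{p,q}^{(2)}(0)_lower_bound_1} and \eqref{g_{p,q}^{(2)}(0)_lower_bound_2}, the curve $y = g_{p,q}^{(2)}(x)$ starts above the diagonal; if $g_{p,q}^{(2)}(x_{p,q}) \leqslant x_{p,q}$, then the intermediate value theorem combined with part \eqref{main_GW_bond_1} of Theorem~\ref{thm:main_GW_bond} forces $w' \leqslant x_{p,q}$. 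Next I would verify \eqref{g_{p,q}(x_{p,q})>x_{p,q}}. Since $g_{p,q}$ is strictly decreasing on $[0,1]$ with unique fixed point $\alpha_{p,q}$, the inequality $g_{p,q}(x_{p,q}) > x_{p,q}$ is equivalent to $x_{p,q} < \alpha_{p,q}$. Chaining gives $w' \leqslant x_{p,q} < \alpha_{p,q}$, so $w' \neq \alpha_{p,q}$, and Lemma~\ref{lem:draw_probab_0_only_if_unique_fixed_point} produces a second fixed point of $g_{p,q}^{(2)}$; part \eqref{main_GW_bond_3} of Theorem~\ref{thm:main_GW_bond} then yields a positive draw probability.

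The main obstacle, and the reason the subsequent sections are necessary, lies in verifying \eqref{w'_leq_x_{p,q}} and \eqref{g_{p,q}(x_{p,q})>x_{p,q}} for each of the four specific families --- Binomial$(d,\pi)$, Poisson$(\lambda)$, Negative Binomial$(r,\pi)$, and the two-point distribution on $\{0,d\}$. The per-family workflow will be: write down the probability generating function $G$ in closed form; differentiate $g_{p,q}^{(2)}$ using \eqref{GW_g_derivative} and the chain rule; isolate the unique critical point $x_{p,q}$ of this derivative by solving $\tfrac{d^{2}}{dx^{2}} g_{p,q}^{(2)}(x) = 0$; translate \eqref{max_derivative_GW_leq_1} into the explicit algebraic inequality appearing in the statement of the corresponding theorem among Theorems~\ref{thm:binomial_bond}--\ref{thm:0_d_bond}; and finally check \eqref{w'_leq_x_{p,q}} and \eqref{g_{p,q}(x_{p,q})>x_{p,q}} by direct algebraic manipulation using the closed-form expression for $x_{p,q}$. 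The delicate parts will be confirming that $x_{p,q}$ is indeed a unique interior maximizer of $\tfrac{d}{dx}g_{p,q}^{(2)}$ within the domain $\mathcal{D}$ of $G$ in each case (so that \eqref{max_derivative_GW} is genuinely an equality), and establishing $g_{p,q}^{(2)}(x_{p,q}) \leqslant x_{p,q}$ uniformly over $(p,q) \in I$ --- these are the steps where the structure of each specific offspring distribution enters essentially, and where ad hoc algebraic tricks will inevitably be needed.
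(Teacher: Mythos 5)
Your proposal is correct and follows essentially the same route as the paper: Lemma~\ref{lem:max_derivative_leq_1} plus part \eqref{main_GW_bond_3} of Theorem~\ref{thm:main_GW_bond} for the ``if'' direction, and the sandwich $w' \leqslant x_{p,q} < \alpha_{p,q}$ (via \eqref{w'_leq_x_{p,q}} and \eqref{g_{p,q}(x_{p,q})>x_{p,q}}) to exhibit two distinct fixed points of $g_{p,q}^{(2)}$ for the ``only if'' direction, with the same per-distribution verifications deferred to \S\ref{sec:binomial_bond}--\S\ref{sec:0_d_bond}. The only caveat is that even the ``if'' direction silently uses the distribution-specific fact that $x_{p,q}$ maximizes $\tfrac{d}{dx}g_{p,q}^{(2)}$ over $[0,1]$, but you flag exactly this point in your final paragraph.
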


\subsection{The bond percolation game played on the Binomial GW tree}\label{sec:binomial_bond}
Let $\chi$ be the Binomial$(d,\pi)$ distribution, so that $\chi(i) = {d \choose i} \pi^{i} (1-\pi)^{d-i}$ for all $0 \leqslant i \leqslant d$, for some $0 < \pi \leqslant 1$ and some $d \in \mathbb{N}$ with $d \geqslant 2$. The corresponding generating function is $G(x) = \left\{\pi x + (1-\pi)\right\}^{d}$ (note that this is well-defined for all $x \in \mathbb{R}^{+}$, i.e.\ in the context of our discussion in \S\ref{sec:gen_tech}, we have $\mathcal{D} = \mathbb{R}^{+}$ in this case). The function $g_{p,q}$ now becomes
\begin{equation}
g_{p,q}(x) = (1-q) - (1-p-q)\left\{\pi x + (1-\pi)\right\}^{d},\label{g_{p,q,bin}}
\end{equation}
so that 
\begin{align}
g'_{p,q}(x) &= -(1-p-q)\pi d \left\{\pi x + (1-\pi)\right\}^{d-1}.\nonumber
\end{align}
This, in turn, yields
\begin{align}
\frac{d}{dx}g_{p,q}^{(2)}(x) &= g'_{p,q}\left(g_{p,q}(x)\right) g'_{p,q}(x) = (1-p-q)^{2}\pi^{2}d^{2}\left[\pi g_{p,q}(x) + (1-\pi)\right]^{d-1} \left\{\pi x + (1-\pi)\right\}^{d-1}\nonumber\\
&=(1-p-q)^{2}\pi^{2}d^{2}\left[\pi (1-q) - \pi (1-p-q) \left\{\pi x + (1-\pi)\right\}^{d} + (1-\pi)\right]^{d-1} \left\{\pi x + (1-\pi)\right\}^{d-1}\nonumber\\
&=(1-p-q)^{2}\pi^{2}d^{2}\left[\left\{1-\pi q - \pi (1-p-q) \left\{\pi x + (1-\pi)\right\}^{d}\right\} \left\{\pi x + (1-\pi)\right\}\right]^{d-1}\nonumber\\
&=(1-p-q)^{2}\pi^{2}d^{2}\left[(1-\pi q)\left\{\pi x + (1-\pi)\right\} - \pi (1-p-q) \left\{\pi x + (1-\pi)\right\}^{d+1}\right]^{d-1}.\nonumber
\end{align}
Defining the function
\begin{equation}
f_{1}(x) = (1-\pi q)\left\{\pi x + (1-\pi)\right\} - \pi (1-p-q) \left\{\pi x + (1-\pi)\right\}^{d+1},\nonumber
\end{equation}
we see that
\begin{equation}
f'_{1}(x) = \pi (1-\pi q) - (d+1) \pi^{2} (1-p-q) \left\{\pi x + (1-\pi)\right\}^{d},\nonumber
\end{equation}
which is strictly positive if and only if
\begin{align}
&(d+1) \pi^{2} (1-p-q) \left\{\pi x + (1-\pi)\right\}^{d} < \pi (1-\pi q) \Longleftrightarrow \left\{\pi x + (1-\pi)\right\} < \left(\frac{1 - \pi q}{(d+1) \pi (1-p-q)}\right)^{\frac{1}{d}}\nonumber\\
& \Longleftrightarrow x < x_{p,q}, \text{ where } x_{p,q} = \frac{1}{\pi}\left(\frac{1 - \pi q}{(d+1) \pi (1-p-q)}\right)^{\frac{1}{d}} + 1 - \frac{1}{\pi}.\label{x_{p,q,bin}} 
\end{align}
Following the reasoning outlined in \S\ref{sec:gen_tech}, we know that the maximum of the derivative of $g_{p,q}^{(2)}(x)$, over $x \in [0,1]$ (in fact, over $x \in \mathcal{D} = \mathbb{R}^{+}$), is attained at $x=x_{p,q}$, and this maximum value equals
\begin{align}
&(1-p-q)^{2}\pi^{2}d^{2}\left[1-\pi q - \pi (1-p-q) \left(\frac{1 - \pi q}{(d+1) \pi (1-p-q)}\right)\right]^{d-1} \left(\frac{1 - \pi q}{(d+1) \pi (1-p-q)}\right)^{\frac{d-1}{d}}\nonumber\\
&=(1-p-q)^{2}\pi^{2}d^{2}\left[1-\pi q - \frac{1-\pi q}{d+1}\right]^{d-1} \left(\frac{1 - \pi q}{(d+1) \pi (1-p-q)}\right)^{\frac{d-1}{d}}\nonumber\\
&=(1-p-q)^{2}\pi^{2}d^{2} \cdot \frac{d^{d-1} (1-\pi q)^{d-1}}{(d+1)^{d-1}} \left(\frac{1 - \pi q}{(d+1) \pi (1-p-q)}\right)^{\frac{d-1}{d}}\nonumber\\
&= (1-p-q)^{\frac{d+1}{d}} \pi^{\frac{d+1}{d}} d^{d+1} (1-\pi q)^{\frac{(d-1)(d+1)}{d}} (d+1)^{-\frac{(d-1)(d+1)}{d}}\nonumber\\
&= \left\{(1-p-q)^{\frac{1}{d}} \pi^{\frac{1}{d}} d (1-\pi q)^{\frac{d-1}{d}} (d+1)^{-\frac{d-1}{d}}\right\}^{d+1}.\nonumber
\end{align}
The expression above is bounded above by $1$ if and only if we have
\begin{align}\label{max_derivative_bounded_by_1_binomial}
&(1-p-q)^{\frac{1}{d}} \pi^{\frac{1}{d}} d (1-\pi q)^{\frac{d-1}{d}} (d+1)^{-\frac{d-1}{d}} \leqslant 1 \Longleftrightarrow (1-p-q) \pi (1-\pi q)^{d-1} \leqslant \frac{(d+1)^{d-1}}{d^{d}},
\end{align}
which is what the inequality in \eqref{max_derivative_GW_leq_1}, mentioned in \S\ref{sec:gen_tech}, boils down to when $\chi$ is Binomial$(d,\pi)$. As argued in \S\ref{sec:gen_tech}, the function $g_{p,q}^{(2)}$ has a unique fixed point whenever \eqref{max_derivative_bounded_by_1_binomial} holds, and consequently, by part \eqref{main_GW_bond_3} of Theorem~\ref{thm:main_GW_bond}, the probability of draw in the bond percolation game is $0$ in this regime.

As outlined in \S\ref{sec:gen_tech}, our next task is to show that $w' \leqslant x_{p,q}$, which we accomplish by showing that \eqref{w'_leq_x_{p,q}} holds, i.e.\ that $g_{p,q}^{(2)}(x_{p,q}) \leqslant x_{p,q}$ for all $(p,q) \in I$, with $I$ as defined in \eqref{I_defn}. From \eqref{g_{p,q,bin}} and \eqref{x_{p,q,bin}}, we see that
\begin{align}
g_{p,q}(x_{p,q}) &= (1-q) - (1-p-q)\left\{\pi x_{p,q} + (1-\pi)\right\}^{d}\nonumber\\
&= (1-q) - (1-p-q) \left(\frac{1 - \pi q}{(d+1) \pi (1-p-q)}\right) = \frac{(d+1)\pi - 1 - d\pi q}{(d+1)\pi},\label{g_{p,q}(x_{p,q})_binomial}
\end{align}
which leads to
\begin{align}
\pi g_{p,q}(x_{p,q}) + (1-\pi) &= \pi \cdot \frac{(d+1)\pi - 1 - d\pi q}{(d+1)\pi} + (1-\pi) = \frac{d(1-\pi q)}{d+1}.\nonumber
\end{align}
Therefore, the condition $g_{p,q}^{(2)}(x_{p,q}) \leqslant x_{p,q}$ becomes equivalent to
\begin{align}
& (1-q) - (1-p-q) \left(\frac{d(1-\pi q)}{d+1}\right)^{d} \leqslant \frac{1}{\pi}\left(\frac{1 - \pi q}{(d+1) \pi (1-p-q)}\right)^{\frac{1}{d}} + 1 - \frac{1}{\pi}\nonumber\\
\Longleftrightarrow & (1-\pi q) - \pi (1-p-q) \left(\frac{d(1-\pi q)}{d+1}\right)^{d} \leqslant \left(\frac{1 - \pi q}{(d+1) \pi (1-p-q)}\right)^{\frac{1}{d}}\nonumber\\
\Longleftrightarrow & 1 \leqslant \frac{\pi (1-p-q) d^{d} (1-\pi q)^{d-1}}{(d+1)^{d}} + \frac{(1-\pi q)^{-\frac{d-1}{d}}}{(d+1)^{\frac{1}{d}} \pi^{\frac{1}{d}} (1-p-q)^{\frac{1}{d}}}\nonumber\\
\Longleftrightarrow & 1 \leqslant \frac{\pi (1-p-q) d^{d} x^{-d}}{(d+1)^{d}} + \frac{x}{(d+1)^{\frac{1}{d}} \pi^{\frac{1}{d}} (1-p-q)^{\frac{1}{d}}}, \text{ where } x = (1-\pi q)^{-\frac{d-1}{d}}.\label{objective_binomial}
\end{align}

Defining the function $f_{2}(x) = \pi (1-p-q) d^{d} x^{-d} (d+1)^{-d} + x (d+1)^{-\frac{1}{d}} \pi^{-\frac{1}{d}} (1-p-q)^{-\frac{1}{d}}$, we see that
\begin{align}
f'_{2}(x) &= - d \pi (1-p-q) d^{d} x^{-d-1} (d+1)^{-d} + (d+1)^{-\frac{1}{d}} \pi^{-\frac{1}{d}} (1-p-q)^{-\frac{1}{d}}\nonumber\\
&= - \pi (1-p-q) d^{d+1} x^{-d-1} (d+1)^{-d} + (d+1)^{-\frac{1}{d}} \pi^{-\frac{1}{d}} (1-p-q)^{-\frac{1}{d}},\nonumber
\end{align}
and this is strictly positive if and only if 
\begin{align}
&(d+1)^{-\frac{1}{d}} \pi^{-\frac{1}{d}} (1-p-q)^{-\frac{1}{d}} > \pi (1-p-q) d^{d+1} x^{-d-1} (d+1)^{-d}\nonumber\\
\Longleftrightarrow & x^{d+1} > \pi^{\frac{d+1}{d}} (1-p-q)^{\frac{d+1}{d}} d^{d+1} (d+1)^{-\frac{(d-1)(d+1)}{d}}\nonumber\\
\Longleftrightarrow & x > \pi^{\frac{1}{d}} (1-p-q)^{\frac{1}{d}} d (d+1)^{-\frac{d-1}{d}}.\nonumber
\end{align}
This goes to show that the function $f_{2}$ is strictly decreasing on $0 \leqslant x < \pi^{\frac{1}{d}} (1-p-q)^{\frac{1}{d}} d (d+1)^{-\frac{d-1}{d}}$ and strictly increasing on $x > \pi^{\frac{1}{d}} (1-p-q)^{\frac{1}{d}} d (d+1)^{-\frac{d-1}{d}}$, and its minimum is thus attained at $x=\pi^{\frac{1}{d}} (1-p-q)^{\frac{1}{d}} d (d+1)^{-\frac{d-1}{d}}$. This minimum value is given by 
\begin{align} 
&\pi (1-p-q) d^{d} \left\{\pi^{\frac{1}{d}} (1-p-q)^{\frac{1}{d}} d (d+1)^{-\frac{d-1}{d}}\right\}^{-d} (d+1)^{-d} + \left\{\pi^{\frac{1}{d}} (1-p-q)^{\frac{1}{d}} d (d+1)^{-\frac{d-1}{d}}\right\}\nonumber\\& (d+1)^{-\frac{1}{d}} \pi^{-\frac{1}{d}} (1-p-q)^{-\frac{1}{d}} = \frac{1}{d+1} + \frac{d}{d+1} = 1,\nonumber
\end{align}
thus establishing that \eqref{objective_binomial} does indeed hold for all values of $(p,q) \in I$. 

The last step left, keeping to the argument outlined in \S\ref{sec:gen_tech}, is to establish \eqref{x_{p,q}<alpha_{p,q}}, i.e.\ to show that $x_{p,q} < \alpha_{p,q}$ whenever \eqref{max_derivative_bounded_by_1_binomial} does not hold, and we prove this by showing that \eqref{g_{p,q}(x_{p,q})>x_{p,q}} holds, i.e.\ that $g_{p,q}(x_{p,q}) > x_{p,q}$ whenever \eqref{max_derivative_bounded_by_1_binomial} does not hold. From \eqref{g_{p,q}(x_{p,q})_binomial} and \eqref{x_{p,q,bin}}, the inequality $g_{p,q}(x_{p,q}) > x_{p,q}$ becomes equivalent to
\begin{align}
& 1 - \frac{1}{(d+1)\pi} - \frac{d q}{d+1} > \frac{1}{\pi}\left(\frac{1 - \pi q}{(d+1) \pi (1-p-q)}\right)^{\frac{1}{d}} + 1 - \frac{1}{\pi}\nonumber\\
\Longleftrightarrow & - 1 - d q \pi > (d+1)\left(\frac{1 - \pi q}{(d+1) \pi (1-p-q)}\right)^{\frac{1}{d}} - (d+1)\nonumber\\
\Longleftrightarrow & d (1-\pi q) > (1-\pi q)^{\frac{1}{d}} (d+1)^{\frac{d-1}{d}} \pi^{-\frac{1}{d}} (1-p-q)^{-\frac{1}{d}}\nonumber\\
\Longleftrightarrow & \pi^{\frac{1}{d}} (1-p-q)^{\frac{1}{d}} (1-\pi q)^{\frac{d-1}{d}} > d^{-1} (d+1)^{\frac{d-1}{d}},\nonumber
\end{align}
which, upon raising both sides to the $d$-th power, becomes precisely the negation of \eqref{max_derivative_bounded_by_1_binomial}. Therefore, we indeed have $g_{p,q}(x_{p,q}) > x_{p,q}$ whenever \eqref{max_derivative_bounded_by_1_binomial} fails, thus completing the last step of the argument outlined in \S\ref{sec:gen_tech} for the case where $\chi$ is Binomial$(d,\pi)$. This completes the proof of Theorem~\ref{thm:binomial_bond}.

\begin{remark}\label{rem:regular}
Setting $\pi = 1$ in the above set-up, we obtain a special sub-case of the Binomial$(d,\pi)$ distribution: where $\chi$ becomes the Dirac probability measure at $d$. In other words, $T_{\chi}$ is now replaced by the deterministic, rooted $d$-regular tree $T_{d}$, in which each vertex has precisely $d$ children. The above proof now reveals that the probability of draw in the bond percolation game played on $T_{d}$ is $0$ if and only if
\begin{equation}
(1-p-q) (1-q)^{d-1} \leqslant \frac{(d+1)^{d-1}}{d^{d}}.\nonumber
\end{equation}
\end{remark}

\subsection{The bond percolation game on the Poisson GW tree}\label{sec:Poisson_bond}
Let the offspring distribution $\chi$ of $T_{\chi}$ be Poisson$(\lambda)$, so that the probability generating function now becomes $G(x) = e^{\lambda(x-1)}$ (once again, the domain $\mathcal{D}$ on which this is defined, referring to the notation set in \S\ref{sec:gen_tech}, is the entire $\mathbb{R}^{+}$). Note that $G'(x) = \lambda G(x)$ and $G''(x) = \lambda^{2}G(x)$ in this case. The function $g_{p,q}$ now becomes
\begin{equation}
g_{p,q}(x) = (1-q) - (1-p-q)e^{\lambda(x-1)},\nonumber
\end{equation} 
and the function $g_{p,q}^{(2)}$ becomes
\begin{align}
g_{p,q}^{(2)}(x) &= (1-q) - (1-p-q)\exp\left\{\lambda\left[(1-q) - (1-p-q)e^{\lambda(x-1)} - 1\right]\right\}\nonumber\\
&= (1-q) - (1-p-q)\exp\left\{-\lambda q - \lambda (1-p-q)e^{\lambda(x-1)}\right\}.\nonumber
\end{align}
Differentiating the expression above, we obtain 
\begin{align}
\frac{d}{dx}g_{p,q}^{(2)}(x) &= (1-p-q)^{2} \lambda^{2}\exp\left\{\lambda(x-1)-\lambda q - \lambda (1-p-q) e^{\lambda(x-1)}\right\}.\label{g_{p,q,lambda}^{(2)}_derivative}
\end{align}
We note that the derivative 
\begin{align}
&\frac{d}{dx}\left[\lambda(x-1)-\lambda q - \lambda (1-p-q) e^{\lambda(x-1)}\right] = \lambda - \lambda^{2}(1-p-q)e^{\lambda(x-1)}\nonumber
\end{align} 
is strictly positive if and only if 
\begin{align}
e^{\lambda(x-1)} < \lambda^{-1}(1-p-q)^{-1} \Longleftrightarrow x < x_{p,q} \text{ where } x_{p,q} = 1 - \frac{\log \lambda}{\lambda} - \frac{\log(1-p-q)}{\lambda}.\label{x_{p,q,lambda}}
\end{align}
As argued in \S\ref{sec:gen_tech}, the maximum of the derivative of $g_{p,q}^{(2)}(x)$ is attained at $x=x_{p,q}$, and this maximum value equals, substituting $x=x_{p,q}$ from \eqref{x_{p,q,lambda}} in \eqref{g_{p,q,lambda}^{(2)}_derivative}, 
\begin{align}
&(1-p-q)^{2} \lambda^{2}\exp\left\{\lambda(x_{p,q}-1)-\lambda q - \lambda (1-p-q) e^{\lambda(x_{p,q}-1)}\right\} = (1-p-q) \lambda e^{-\lambda q - 1}.\label{max_g_{p,q,lambda}^{(2)}_derivative}
\end{align}
As argued in \S\ref{sec:gen_tech}, the probability of draw in the bond percolation game played on $T_{\chi}$, with $\chi$ Poisson$(\lambda)$, is $0$ whenever the inequality in \eqref{max_derivative_GW_leq_1} holds, i.e.\ whenever we have
\begin{equation}\label{max_g_{p,q,lambda}^{(2)}_derivative_bounded_by_1}
(1-p-q) \lambda e^{-\lambda q} \leqslant e.
\end{equation}

As the next step of the argument outlined in \S\ref{sec:gen_tech}, we check whether $w' \leqslant x_{p,q}$, which we accomplish by proving that \eqref{w'_leq_x_{p,q}} holds, i.e.\ by showing that $g_{p,q}^{(2)}(x_{p,q}) \leqslant x_{p,q}$ for all $(p,q) \in I$, with $I$ as defined in \eqref{I_defn}. In other words, we wish to show that
\begin{align}
&(1-q) - (1-p-q)e^{-\lambda q - 1} \leqslant 1 - \frac{\log \lambda}{\lambda} - \frac{\log(1-p-q)}{\lambda}\nonumber\\
&\Longleftrightarrow \log \lambda + \log(1-p-q) - \lambda q \leqslant (1-p-q)\lambda e^{-\lambda q - 1} \Longleftrightarrow \log x \leqslant x e^{-1},\label{objective_Poisson}  
\end{align}
where $x = (1-p-q)\lambda e^{-\lambda q - 1}$. Defining the function $f_{3}(x) = x e^{-1} - \log x$, we see that $f'_{3}(x) = e^{-1} - x^{-1}$, which is strictly positive as long as $x > e$. Thus, the function $f_{3}(x)$ is strictly decreasing for $0 < x < e$ and strictly increasing for $x > e$, leading to its minimum value being equal to $0$ (attained at $x=e$), and establishing that \eqref{objective_Poisson} does hold, as desired.

As the final step of the argument outlined in \S\ref{sec:gen_tech}, we show that \eqref{x_{p,q}<alpha_{p,q}} holds, i.e.\ that we have $x_{p,q} < \alpha_{p,q}$ whenever \eqref{max_g_{p,q,lambda}^{(2)}_derivative_bounded_by_1} is not true, and we accomplish this by showing that \eqref{g_{p,q}(x_{p,q})>x_{p,q}} is true, i.e.\ the inequality $g_{p,q}(x_{p,q}) > x_{p,q}$ holds whenever \eqref{max_g_{p,q,lambda}^{(2)}_derivative_bounded_by_1} does not, which is equivalent to
\begin{align}
&(1-q) - \frac{1}{\lambda} > 1 - \frac{\log \lambda}{\lambda} - \frac{\log(1-p-q)}{\lambda} \Longleftrightarrow \log \lambda + \log (1-p-q) - \lambda q > 1 \Longleftrightarrow (1-p-q)\lambda e^{-\lambda q} > e,\nonumber 
\end{align}
which is precisely the negation of \eqref{max_g_{p,q,lambda}^{(2)}_derivative_bounded_by_1}. This concludes the argument outlined in \S\ref{sec:gen_tech} for the case where $\chi$ is Poisson$(\lambda)$, and establishes the claim made in Theorem~\ref{thm:Poisson_bond}.

\subsection{The bond percolation game on a rooted GW tree with negative binomial offspring distribution}\label{sec:negative_binomial_bond}
We let $\chi$ be Negative Binomial$(r,\pi)$, i.e.\ where the number of children of the root $\phi$ of $T_{\chi}$ represents the total number of trials resulting in failures that are required in order to obtain precisely $r$ many successes, where $r \in \mathbb{N}$ is pre-specified. A trial here refers to that of a random experiment that either results in success with probability $\pi \in (0,1)$, or in failure with probability $1-\pi$, and the trials are assumed to be mutually independent. Thus, $\chi(m) = {m+r-1 \choose r-1} (1-\pi)^{m} \pi^{r}$ for all $m \in \mathbb{N}_{0}$, and the probability generating function becomes $G(x) = \pi^{r}\left\{1-(1-\pi)x\right\}^{-r}$ (here, the domain $\mathcal{D}$, mentioned in \S\ref{sec:gen_tech}, throughout which $G$ is well-defined is the interval $\left[0, (1-\pi)^{-1}\right)$). Therefore, the function $g_{p,q}$ now becomes
\begin{align}
g_{p,q}(x) &= (1-q) - (1-p-q)\left(\frac{\pi}{1-(1-\pi)x}\right)^{r}.\nonumber
\end{align}
We then have
\begin{align}
g'_{p,q}(x) &= \frac{-r(1-p-q)(1-\pi)\pi^{r}}{\left\{1-(1-\pi)x\right\}^{r+1}},\nonumber
\end{align}
leading to
\begin{align}
&\frac{d}{dx}g_{p,q}^{(2)}(x) = g'_{p,q}\left(g_{p,q}(x)\right) g'_{p,q}(x) = \frac{-r(1-p-q)(1-\pi)\pi^{r}}{\left\{1-(1-\pi)g_{p,q}(x)\right\}^{r+1}} \cdot \frac{-r(1-p-q)(1-\pi)\pi^{r}}{\left\{1-(1-\pi)x\right\}^{r+1}}\nonumber\\
&= \frac{r^{2}(1-p-q)^{2}(1-\pi)^{2}\pi^{2r}}{\left\{1 - (1-\pi)(1-q) + (1-\pi)(1-p-q)\left(\frac{\pi}{1-(1-\pi)x}\right)^{r}\right\}^{r+1} \left\{1-(1-\pi)x\right\}^{r+1}}\nonumber\\
&= \frac{r^{2}(1-p-q)^{2}(1-\pi)^{2}\pi^{2r}\left\{1-(1-\pi)x\right\}^{r^{2}-1}}{\left\{(q+\pi-q\pi)\left\{1-(1-\pi)x\right\}^{r} + (1-\pi)(1-p-q)\pi^{r}\right\}^{r+1}}\nonumber\\
&= r^{2}(1-p-q)^{2}(1-\pi)^{2}\pi^{2r} \left(\frac{\left\{1-(1-\pi)x\right\}^{r-1}}{(q+\pi-q\pi)\left\{1-(1-\pi)x\right\}^{r} + (1-\pi)(1-p-q)\pi^{r}}\right)^{r+1}\nonumber\\
&= r^{2}(1-p-q)^{2}(1-\pi)^{2}\pi^{2r} \left[(q+\pi-q\pi)\left\{1-(1-\pi)x\right\} + \frac{(1-\pi)(1-p-q)\pi^{r}}{\left\{1-(1-\pi)x\right\}^{(r-1)}}\right]^{-(r+1)}.\label{derivative_g_{p,q,pi,r}^{(2)}_negative_binomial}
\end{align}

From \eqref{derivative_g_{p,q,pi,r}^{(2)}_negative_binomial}, it is evident that if we define the function
\begin{align}
f_{4}(x) = (q+\pi-q\pi)\left\{1-(1-\pi)x\right\} + \frac{(1-\pi)(1-p-q)\pi^{r}}{\left\{1-(1-\pi)x\right\}^{(r-1)}},\nonumber
\end{align}
then the derivative of $g_{p,q}^{(2)}(x)$ is strictly increasing in $x$ whenever $f_{4}(x)$ is strictly decreasing in $x$, and vice versa. We see that
\begin{align}
f'_{4}(x) &= -(q+\pi-q\pi)(1-\pi) + \frac{(r-1)(1-\pi)^{2}(1-p-q)\pi^{r}}{\left\{1-(1-\pi)x\right\}^{r}},\nonumber
\end{align}
and this is strictly positive if and only if
\begin{align}
&\frac{(r-1)(1-\pi)^{2}(1-p-q)\pi^{r}}{\left\{1-(1-\pi)x\right\}^{r}} > (q+\pi-q\pi)(1-\pi)\nonumber\\
\Longleftrightarrow &\frac{(r-1)^{\frac{1}{r}}(1-\pi)^{\frac{1}{r}}(1-p-q)^{\frac{1}{r}}\pi}{(q+\pi-q\pi)^{\frac{1}{r}}} > 1-(1-\pi)x\nonumber\\
\Longleftrightarrow & x > x_{p,q}, \text{ where } x_{p,q} = \frac{1}{1-\pi}\left[1 - \frac{(r-1)^{\frac{1}{r}}(1-\pi)^{\frac{1}{r}}(1-p-q)^{\frac{1}{r}}\pi}{(q+\pi-q\pi)^{\frac{1}{r}}}\right].\label{f'_positive_cond}
\end{align}
As argued in \S\ref{sec:gen_tech}, the maximum of the derivative of $g_{p,q}^{(2)}(x)$ is attained at $x=x_{p,q}$, and using \eqref{derivative_g_{p,q,pi,r}^{(2)}_negative_binomial} and the expression for $x_{p,q}$ from \eqref{f'_positive_cond}, we see that this maximum value equals
\begin{align}
&r^{2}(1-p-q)^{2}(1-\pi)^{2}\pi^{2r} \left[(q+\pi-q\pi)\left\{1-(1-\pi)x_{p,q}\right\} + \frac{(1-\pi)(1-p-q)\pi^{r}}{\left\{1-(1-\pi)x_{p,q}\right\}^{(r-1)}}\right]^{-(r+1)} \nonumber\\
&= \frac{r^{2}(1-p-q)^{2}(1-\pi)^{2}\pi^{2r}\left\{1-(1-\pi)x_{p,q}\right\}^{(r-1)(r+1)}}{\left[(q+\pi-q\pi)\left\{1-(1-\pi)x_{p,q}\right\}^{r} + (1-\pi)(1-p-q)\pi^{r}\right]^{r+1}}\nonumber\\ 
&= \frac{r^{2}(1-p-q)^{2}(1-\pi)^{2}\pi^{2r}\left\{1-(1-\pi)x_{p,q}\right\}^{(r-1)(r+1)}}{\left[(q+\pi-q\pi) \cdot \frac{(r-1)(1-\pi)(1-p-q)\pi^{r}}{(q+\pi-q\pi)} + (1-\pi)(1-p-q)\pi^{r}\right]^{r+1}}\nonumber\\
&= \frac{r^{2}(1-p-q)^{2}(1-\pi)^{2}\pi^{2r}\left\{1-(1-\pi)x_{p,q}\right\}^{(r-1)(r+1)}}{r^{r+1} (1-\pi)^{r+1} (1-p-q)^{r+1} \pi^{r(r+1)}}\nonumber\\
&= \frac{\left\{1-(1-\pi)x_{p,q}\right\}^{(r-1)(r+1)}}{r^{r-1}(1-\pi)^{r-1}(1-p-q)^{r-1}\pi^{r(r-1)}} = \left(\frac{\left\{1-(1-\pi)x_{p,q}\right\}^{r+1}}{r (1-\pi) (1-p-q) \pi^{r}}\right)^{r-1}\nonumber\\
&= \left(\frac{\left\{1-(1-\pi) \cdot \frac{1}{1-\pi}\left[1 - \frac{(r-1)^{\frac{1}{r}}(1-\pi)^{\frac{1}{r}}(1-p-q)^{\frac{1}{r}}\pi}{(q+\pi-q\pi)^{\frac{1}{r}}}\right]\right\}^{r+1}}{r (1-\pi) (1-p-q) \pi^{r}}\right)^{r-1}\nonumber\\ 
&= \left(\frac{(r-1)^{\frac{r+1}{r}}(1-\pi)^{\frac{r+1}{r}}(1-p-q)^{\frac{r+1}{r}}\pi^{r+1}}{(q+\pi-q\pi)^{\frac{r+1}{r}}r (1-\pi) (1-p-q) \pi^{r}}\right)^{r-1} = \left(\frac{(r-1)^{\frac{r+1}{r}}(1-\pi)^{\frac{1}{r}}(1-p-q)^{\frac{1}{r}}\pi}{(q+\pi-q\pi)^{\frac{r+1}{r}}r}\right)^{r-1}.\nonumber
\end{align}
From the argument outlined in \S\ref{sec:gen_tech}, we conclude that the above expression is bounded above by $1$, and consequently, the probability of draw in the bond percolation game played on $T_{\chi}$, with $\chi$ Negative Binomial$(r,\pi)$, is $0$, whenever the inequality in \eqref{max_derivative_GW_leq_1} holds, which, in this case, is equivalent to
\begin{equation}
(r-1)^{r+1}(1-\pi)(1-p-q)\pi^{r} \leqslant (q+\pi-q\pi)^{r+1}r^{r}.\label{eq:lem_neg_bin_2}
\end{equation}

As the next step of the argument outlined in \S\ref{sec:gen_tech}, we show that $w' \leqslant x_{p,q}$, which we accomplish by showing that \eqref{w'_leq_x_{p,q}} is true, i.e.\ the inequality $g_{p,q}^{(2)}(x_{p,q}) \leqslant x_{p,q}$ holds for all $(p,q) \in I$, with $I$ as defined in \eqref{I_defn}. Note, using \eqref{f'_positive_cond}, that 
\begin{align}
&g_{p,q}(x_{p,q}) = (1-q) - (1-p-q)\left(\frac{\pi}{1-(1-\pi)x_{p,q}}\right)^{r}\nonumber\\
&= (1-q) - (1-p-q)\cdot \frac{q+\pi-q \pi}{(r-1) (1-\pi) (1-p-q)} = \frac{(1-q)(r-1)(1-\pi) - (q+\pi-q\pi)}{(r-1)(1-\pi)} \nonumber\\
&= \frac{(1-q)(r-1)(1-\pi) + (1-q)(1-\pi) - 1}{(1-\pi)(r-1)} = \frac{r(1-q)(1-\pi) - 1}{(1-\pi)(r-1)}.\label{g_{p,q,pi,r}(x_{p,q,pi,r})}
\end{align}
This, in turn, leads to 
\begin{align}
g_{p,q}^{(2)}(x_{p,q}) &= (1-q) - (1-p-q)\left(\frac{\pi}{1-(1-\pi) \cdot \frac{r(1-q)(1-\pi) - 1}{(1-\pi)(r-1)}}\right)^{r} = (1-q) - (1-p-q)\left(\frac{\pi (r-1)}{r(q+\pi-q\pi)}\right)^{r},\nonumber
\end{align}
so that we have $g_{p,q}^{(2)}(x_{p,q}) \leqslant x_{p,q}$ if and only if (using \eqref{f'_positive_cond})
\begin{align}
&(1-q) - (1-p-q)\left(\frac{\pi (r-1)}{r(q+\pi-q\pi)}\right)^{r} \leqslant \frac{1}{1-\pi}\left[1 - \frac{(r-1)^{\frac{1}{r}}(1-\pi)^{\frac{1}{r}}(1-p-q)^{\frac{1}{r}}\pi}{(q+\pi-q\pi)^{\frac{1}{r}}}\right]\nonumber\\
&\Longleftrightarrow \frac{(r-1)^{\frac{1}{r}}(1-\pi)^{\frac{1}{r}}(1-p-q)^{\frac{1}{r}}\pi}{(q+\pi-q\pi)^{\frac{1}{r}}} \leqslant (q+\pi-q\pi) + (1-\pi)(1-p-q)\left(\frac{\pi (r-1)}{r(q+\pi-q\pi)}\right)^{r}\nonumber\\
&\Longleftrightarrow \frac{(r-1)^{\frac{1}{r}}(1-\pi)^{\frac{1}{r}}(1-p-q)^{\frac{1}{r}}\pi}{(q+\pi-q\pi)^{\frac{r+1}{r}}} - \frac{(1-\pi)(1-p-q)\pi^{r}(r-1)^{r}}{r^{r}(q+\pi-q\pi)^{r+1}} \leqslant 1\nonumber\\
&\Longleftrightarrow (r-1)^{\frac{1}{r}}x - \frac{(r-1)^{r} x^{r}}{r^{r}} \leqslant 1, \text{ where } x = \frac{(1-\pi)^{\frac{1}{r}}(1-p-q)^{\frac{1}{r}}\pi}{(q+\pi-q\pi)^{\frac{r+1}{r}}}.\label{objective_neg_bin}
\end{align}
Let us define the function $f_{5}(x) = (r-1)^{\frac{1}{r}}x - (r-1)^{r} r^{-r} x^{r}$, so that the derivative
\begin{align}
f'_{5}(x) &= (r-1)^{\frac{1}{r}} - (r-1)^{r} r^{-r} \cdot r x^{r-1} = (r-1)^{\frac{1}{r}} - (r-1)^{r} r^{-(r-1)} x^{r-1}\nonumber
\end{align}
is strictly positive if and only if $(r-1)^{\frac{1}{r}} > (r-1)^{r} r^{-(r-1)} x^{r-1} \Longleftrightarrow x < r (r-1)^{-\frac{r+1}{r}}$. Therefore, the function $f_{5}(x)$ is strictly increasing on $0 \leqslant x < r (r-1)^{-\frac{r+1}{r}}$ and strictly decreasing on $r (r-1)^{-\frac{r+1}{r}} < x \leqslant 1$, and its maximum value is attained at $x=r (r-1)^{-\frac{r+1}{r}}$. This maximum value is given by
\begin{align}
&(r-1)^{\frac{1}{r}} \cdot r (r-1)^{-\frac{r+1}{r}} - (r-1)^{r} r^{-r} \cdot \left(r (r-1)^{-\frac{r+1}{r}}\right)^{r} = \frac{r}{r-1} - \frac{1}{r-1} = 1,\nonumber
\end{align}
which shows that \eqref{objective_neg_bin} does hold for all permissible values of $x$, thereby showing that \eqref{w'_leq_x_{p,q}} holds for all $(p,q) \in I$.

The final step of the argument outlined in \S\ref{sec:gen_tech} involves proving \eqref{x_{p,q}<alpha_{p,q}}, i.e.\ showing that $x_{p,q} < \alpha_{p,q}$ whenever \eqref{eq:lem_neg_bin_2} does not hold. This is accomplished by showing that \eqref{g_{p,q}(x_{p,q})>x_{p,q}} is true, i.e.\ the inequality $g_{p,q}(x_{p,q}) > x_{p,q}$ holds whenever \eqref{eq:lem_neg_bin_2} does not. Using \eqref{g_{p,q,pi,r}(x_{p,q,pi,r})} and \eqref{f'_positive_cond}, we see that this inequality is equivalent to 
\begin{align}
& \frac{r(1-q)(1-\pi) - 1}{(1-\pi)(r-1)} > \frac{1}{1-\pi}\left[1 - \frac{(r-1)^{\frac{1}{r}}(1-\pi)^{\frac{1}{r}}(1-p-q)^{\frac{1}{r}}\pi}{(q+\pi-q\pi)^{\frac{1}{r}}}\right]\nonumber\\
&\Longleftrightarrow \frac{r(1-q)(1-\pi)}{r-1} - \frac{1}{r-1} > 1 - \frac{(r-1)^{\frac{1}{r}}(1-\pi)^{\frac{1}{r}}(1-p-q)^{\frac{1}{r}}\pi}{(q+\pi-q\pi)^{\frac{1}{r}}}\nonumber\\
&\Longleftrightarrow \frac{(r-1)^{\frac{1}{r}}(1-\pi)^{\frac{1}{r}}(1-p-q)^{\frac{1}{r}}\pi}{(q+\pi-q\pi)^{\frac{1}{r}}} > 1 + \frac{1}{r-1} - \frac{r(1-q)(1-\pi)}{r-1}\nonumber\\
&\Longleftrightarrow \frac{(r-1)^{\frac{1}{r}}(1-\pi)^{\frac{1}{r}}(1-p-q)^{\frac{1}{r}}\pi}{(q+\pi-q\pi)^{\frac{1}{r}}} > \frac{r(q+\pi-q\pi)}{r-1}\nonumber\\
&\Longleftrightarrow (r-1)^{\frac{r+1}{r}}(1-\pi)^{\frac{1}{r}}(1-p-q)^{\frac{1}{r}}\pi > r(q+\pi-q\pi)^{\frac{r+1}{r}}\nonumber\\
&\Longleftrightarrow (r-1)^{r+1}(1-\pi)(1-p-q)\pi^{r} > r^{r}(q+\pi-q\pi)^{r+1},\nonumber
\end{align}
which is exactly what the negation of \eqref{eq:lem_neg_bin_2} guarantees. This completes the argument outlined in \S\ref{sec:gen_tech} in the case where $\chi$ is Negative Binomial$(r,\pi)$, and also concludes the proof of Theorem~\ref{thm:neg_bin_bond}.

\begin{remark}
It is worthwhile to note that when we set $r=1$ in \eqref{eq:lem_neg_bin_2}, the inequality is automatically satisfied for \emph{every} $(p,q) \in I$. This goes to show that when $\chi$ is the Geometric$(\pi)$ distribution, the probability of draw in the bond percolation game played on $T_{\chi}$ is $0$ for \emph{every} $(p,q) \in I$, where $I$ is as defined in \eqref{I_defn}.
\end{remark}

\subsection{When the offspring distribution is supported on $\{0,d\}$, for $d \in \mathbb{N}$ and $d \geqslant 2$}\label{sec:0_d_bond}
We consider $\chi$ such that $\chi(0) = 1-\pi$ and $\chi(d) = \pi$, for some $0 < \pi < 1$ and some $d \in \mathbb{N}$ with $d \geqslant 2$. In this case, the probability generating function is given by $G(x) = (1-\pi) + \pi x^{d}$, so that the function $g_{p,q}$ becomes 
\begin{align}
g_{p,q}(x) = (1-q) - (1-p-q)\left[(1-\pi) + \pi x^{d}\right] = \pi(1-q) + p(1-\pi) - (1-p-q)\pi x^{d}.\nonumber
\end{align}
This yields $g'_{p,q}(x) = -(1-p-q)\pi d x^{d-1}$, so that 
\begin{align}
\frac{d}{dx}g_{p,q}^{(2)}(x) &= (1-p-q)^{2}\pi^{2}d^{2}\left[\pi(1-q) + p(1-\pi) - (1-p-q)\pi x^{d}\right]^{d-1}x^{d-1}.\nonumber
\end{align}
Setting $f_{6}(x) = \left\{\pi(1-q) + p(1-\pi)\right\}x - (1-p-q)\pi x^{d+1}$, we see that $f'_{6}(x) = \pi(1-q) + p(1-\pi) - (1-p-q) \pi (d+1) x^{d}$, which is strictly positive if and only if 
\begin{align}
\pi(1-q) + p(1-\pi) > (1-p-q) \pi (d+1) x^{d} \Longleftrightarrow x < x_{p,q}, \text{ where } x_{p,q} = \left(\frac{\pi(1-q) + p(1-\pi)}{(1-p-q) \pi (d+1)}\right)^{\frac{1}{d}}.\label{x_{p,q,pi,d}}
\end{align}
As argued in \S\ref{sec:gen_tech}, the derivative of $g_{p,q}^{(2)}(x)$ attains its maximum value at $x=x_{p,q}$, and this maximum value equals
\begin{align}
&(1-p-q)^{2}\pi^{2}d^{2}\left[\pi(1-q) + p(1-\pi) - (1-p-q)\pi x_{p,q}^{d}\right]^{d-1}x_{p,q}^{d-1}\nonumber\\
&= (1-p-q)^{2}\pi^{2}d^{2}\left[\pi(1-q) + p(1-\pi) - (1-p-q)\pi \cdot \frac{\pi(1-q) + p(1-\pi)}{(1-p-q) \pi (d+1)}\right]^{d-1} \left(\frac{\pi(1-q) + p(1-\pi)}{(1-p-q) \pi (d+1)}\right)^{\frac{d-1}{d}}\nonumber\\
&= (1-p-q)^{2}\pi^{2}d^{2} \frac{\left[d\left\{\pi(1-q) + p(1-\pi)\right\}\right]^{d-1}}{(d+1)^{d-1}} \left(\frac{\pi(1-q) + p(1-\pi)}{(1-p-q) \pi (d+1)}\right)^{\frac{d-1}{d}}\nonumber\\
&= (1-p-q)^{\frac{d+1}{d}} \pi^{\frac{d+1}{d}} d^{d+1} \left\{\pi(1-q) + p(1-\pi)\right\}^{\frac{(d-1)(d+1)}{d}} (d+1)^{-\frac{(d-1)(d+1)}{d}}\nonumber\\
&= \left[(1-p-q)^{\frac{1}{d}} \pi^{\frac{1}{d}} d \left\{\pi(1-q) + p(1-\pi)\right\}^{\frac{d-1}{d}} (d+1)^{-\frac{d-1}{d}}\right]^{d+1},\nonumber
\end{align}
and this is bounded above by $1$ if and only if 
\begin{equation}\label{eq:lem_0_d_2}
(1-p-q) \pi \left\{\pi(1-q) + p(1-\pi)\right\}^{d-1} \leqslant \frac{(d+1)^{d-1}}{d^{d}}.
\end{equation}
This corresponds to \eqref{max_derivative_GW_leq_1} of \S\ref{sec:gen_tech}. As argued in \S\ref{sec:gen_tech}, the probability of the bond percolation game, played on $T_{\chi}$ with $\chi$ supported on $\{0,d\}$, is $0$ whenever \eqref{eq:lem_0_d_2} holds.

Following the argument outlined in \S\ref{sec:gen_tech}, our next task is to prove that $w' \leqslant x_{p,q}$, which we accomplish by proving \eqref{w'_leq_x_{p,q}}, i.e.\ by showing that $g_{p,q}^{(2)}(x_{p,q}) \leqslant x_{p,q}$ for each $(p,q) \in I$, where $I$ is as defined in \eqref{I_defn}. This is equivalent to showing that
\begin{align}
& \pi(1-q) + p(1-\pi) - (1-p-q)\pi \left[\pi(1-q) + p(1-\pi) - (1-p-q)\pi x_{p,q}^{d}\right]^{d} < x_{p,q}\nonumber\\
\Longleftrightarrow &\pi(1-q) + p(1-\pi) - (1-p-q)\pi \left[\pi(1-q) + p(1-\pi) - (1-p-q)\pi \cdot \frac{\pi(1-q) + p(1-\pi)}{(1-p-q) \pi (d+1)}\right]^{d} \nonumber\\&< \left(\frac{\pi(1-q) + p(1-\pi)}{(1-p-q) \pi (d+1)}\right)^{\frac{1}{d}}\nonumber\\
\Longleftrightarrow &\pi(1-q) + p(1-\pi) - \frac{(1-p-q)\pi d^{d} \left\{\pi(1-q) + p(1-\pi)\right\}^{d}}{(d+1)^{d}} < \left(\frac{\pi(1-q) + p(1-\pi)}{(1-p-q) \pi (d+1)}\right)^{\frac{1}{d}}\nonumber\\
\Longleftrightarrow & \left\{\pi(1-q) + p(1-\pi)\right\}\left[1 - \frac{(1-p-q)\pi d^{d} \left\{\pi(1-q) + p(1-\pi)\right\}^{d-1}}{(d+1)^{d}}\right] < \left(\frac{\pi(1-q) + p(1-\pi)}{(1-p-q) \pi (d+1)}\right)^{\frac{1}{d}}\nonumber\\
\Longleftrightarrow & 1 < \frac{x^{-\frac{1}{d}}}{(1-p-q)^{\frac{1}{d}} \pi^{\frac{1}{d}} (d+1)^{\frac{1}{d}}} + \frac{(1-p-q)\pi d^{d} x}{(d+1)^{d}}, \text{ where } x = \left\{\pi(1-q) + p(1-\pi)\right\}^{d-1}. \label{objective_0_d}
\end{align}
Defining the function $f_{7}(x) = x^{-\frac{1}{d}}(1-p-q)^{-\frac{1}{d}} \pi^{-\frac{1}{d}} (d+1)^{-\frac{1}{d}} + (1-p-q)\pi d^{d} x (d+1)^{-d}$, we obtain
\begin{align}
f'_{7}(x) = -\frac{1}{d} x^{-\frac{d+1}{d}} (1-p-q)^{-\frac{1}{d}} \pi^{-\frac{1}{d}} (d+1)^{-\frac{1}{d}} + (1-p-q)\pi d^{d} (d+1)^{-d},\nonumber
\end{align}
which is strictly positive if and only if
\begin{align}
&(1-p-q)\pi d^{d} (d+1)^{-d} > \frac{1}{d} x^{-\frac{d+1}{d}} (1-p-q)^{-\frac{1}{d}} \pi^{-\frac{1}{d}} (d+1)^{-\frac{1}{d}}\nonumber\\
\Longleftrightarrow & (1-p-q)^{\frac{d+1}{d}} \pi^{\frac{d+1}{d}} d^{d+1} (d+1)^{-\frac{(d-1)(d+1)}{d}} > x^{-\frac{d+1}{d}} \Longleftrightarrow x > (1-p-q)^{-1} \pi^{-1} d^{-d} (d+1)^{d-1}.\nonumber
\end{align}
Therefore, the function $f_{7}$ is strictly decreasing for $0 < x < (1-p-q)^{-1} \pi^{-1} d^{-d} (d+1)^{d-1}$, and strictly increasing for $(1-p-q)^{-1} \pi^{-1} d^{-d} (d+1)^{d-1} < x \leqslant 1$, so that the minimum value of $f_{7}(x)$ is attained at $x=(1-p-q)^{-1} \pi^{-1} d^{-d} (d+1)^{d-1}$. This minimum value is
\begin{align}
&\frac{\left\{(1-p-q)^{-1} \pi^{-1} d^{-d} (d+1)^{d-1}\right\}^{-\frac{1}{d}}}{(1-p-q)^{\frac{1}{d}} \pi^{\frac{1}{d}} (d+1)^{\frac{1}{d}}} + \frac{(1-p-q)\pi d^{d} (1-p-q)^{-1} \pi^{-1} d^{-d} (d+1)^{d-1}}{(d+1)^{d}} = \frac{d}{d+1} + \frac{1}{d+1} = 1,\nonumber
\end{align}
which tells us that \eqref{objective_0_d} holds for all $(p,q) \in I$, for $I$ as defined in \eqref{I_defn}.

The final step in the argument outlined in \S\ref{sec:gen_tech} is proving \eqref{x_{p,q}<alpha_{p,q}}, i.e.\ showing that $x_{p,q} < \alpha_{p,q}$ whenever \eqref{eq:lem_0_d_2} does not hold, which is accomplished by proving \eqref{g_{p,q}(x_{p,q})>x_{p,q}}, i.e.\ by showing that $g_{p,q}(x_{p,q}) > x_{p,q}$ holds whenever \eqref{eq:lem_0_d_2} does not. This inequality is equivalent to
(using \eqref{x_{p,q,pi,d}})
\begin{align}
&\pi(1-q) + p(1-\pi) - (1-p-q)\pi x_{p,q}^{d} > x_{p,q}\nonumber\\
\Longleftrightarrow & \pi(1-q) + p(1-\pi) - (1-p-q)\pi \cdot \frac{\pi(1-q) + p(1-\pi)}{(1-p-q) \pi (d+1)} > \left(\frac{\pi(1-q) + p(1-\pi)}{(1-p-q) \pi (d+1)}\right)^{\frac{1}{d}}\nonumber\\
\Longleftrightarrow & \frac{d\left\{\pi(1-q) + p(1-\pi)\right\}}{d+1} > \left(\frac{\pi(1-q) + p(1-\pi)}{(1-p-q) \pi (d+1)}\right)^{\frac{1}{d}}\nonumber\\
\Longleftrightarrow & \left\{\pi(1-q) + p(1-\pi)\right\}^{\frac{d-1}{d}} (1-p-q)^{\frac{1}{d}} \pi^{\frac{1}{d}} > (d+1)^{\frac{d-1}{d}} d^{-1}\nonumber\\
\Longleftrightarrow & \left\{\pi(1-q) + p(1-\pi)\right\}^{d-1} (1-p-q) \pi > \frac{(d+1)^{d-1}}{d^{d}},\nonumber
\end{align}
which is precisely what the negation of \eqref{eq:lem_0_d_2} is. This completes the argument outlined in \S\ref{sec:gen_tech} for the case where $\chi$ is supported on $\{0,d\}$, and establishes Theorem~\ref{thm:0_d_bond}.

\section{Ergodicity of a suitably defined probabilistic tree automaton}\label{sec:ergodicity}
This section is dedicated to exploring the connection between the bond percolation game played on $T_{d}$ (recall that $T_{d}$ is the rooted $d$-regular tree in which each vertex has precisely $d$ children, and it is obtained as a special case of $T_{\chi}$ in \S\ref{sec:binomial_bond} by setting $\pi = 1$ in the Binomial$(d,\pi)$ offspring distribution) and a probabilistic tree automaton (PTA) whose stochastic update rules represent the recurrence relations arising out of the game. We use the already established Theorem~\ref{thm:binomial_bond} to describe the regime in which this PTA is ergodic. Before proceeding further, we include here a brief introduction to what PTAs (defined on $T_{d}$) are, and what it means to say that such a PTA is ergodic.

\subsection{A formal introduction to probabilistic tree automata}\label{subsec:tree_automata}
A \emph{finite state probabilistic tree automaton} (PTA) $A$, defined on $T_{d}$, comprises the following components:
\begin{enumerate}
\item a \emph{finite} set $\mathcal{S}$ of \emph{states} or \emph{colours} that we refer to as its \emph{alphabet},
\item a \emph{stochastic update rule}, encompassed by a $|\mathcal{S}|^{d} \times |\mathcal{S}|$-dimensional stochastic matrix $\vartheta_{A}$, so that if a vertex $u$ in $T_{d}$ has children named $u_{1}, u_{2}, \ldots, u_{d}$, and $u_{i}$ has state $a_{i} \in \mathcal{S}$ for $1 \leqslant i \leqslant d$, the probability that the state of $u$ is $b \in \mathcal{S}$ is given by the entry $\vartheta_{A}((a_{1}, a_{2}, \ldots, a_{d}), b)$ of $\vartheta_{A}$.
\end{enumerate}
Let $L_{n}$ denote the set of all vertices of $T_{d}$ that are in generation $n$ for $n \in \mathbb{N}_{0}$, where the root $\phi$ is assumed to be in generation $0$ (i.e.\ $L_{0} = \{\phi\}$). Conditioned on the states of all vertices in generation $L_{n+1}$, the (random) states assigned to the vertices in $L_{n}$ under the application of $A$ are independent.

A \emph{configuration} on $L_{n}$ refers to an assignment $\sigma: L_{n} \rightarrow \mathcal{S}$ of states to the vertices of $L_{n}$, for each $n \in \mathbb{N}_{0}$. It is evident that once we fix a configuration on $L_{n}$, iterative applications of $A$ are only capable of determining / affecting the (random) states of the vertices in $L_{i}$ for $0 \leqslant i \leqslant n-1$. In fact, it will frequently be the case that we shall fix $n$, focus on the \emph{level-$n$ truncated tree} $T_{d}^{n}$ that is the induced subgraph of $T_{d}$ on the subset of vertices spanning generations $L_{0}, L_{1}, \ldots, L_{n}$, carry out our analysis and \emph{then} let $n \rightarrow \infty$. In such situations, a configuration on $L_{n}$, which is the last generation to be included in $T_{d}^{n}$, will be, quite appropriately, referred to as a \emph{boundary configuration}.

Given a boundary configuration $\sigma: L_{n} \rightarrow \mathcal{S}$ and a vertex $u$ that is situated in generation $i$ for some $0 \leqslant i \leqslant n-1$, we denote by $A(\sigma, u)$ the random variable indicating the state of $u$ obtained via iterative applications of $A$, starting from $\sigma$. Formally speaking, if $u$ is in $L_{n-1}$ and its children are named $u_{1}, u_{2}, \ldots, u_{d}$, then $A(\sigma, u)$ is the random variable with the following probability distribution:  
\begin{equation}
\Prob[A(\sigma, u) = b] = \vartheta_{A}((\sigma(u_{1}), \sigma(u_{2}), \ldots, \sigma(u_{d})), b), \text{ for each } b \in \mathcal{S},\nonumber
\end{equation}
where $\sigma(u_{i})$ indicates the state that $\sigma$ assigns to the vertex $u_{i}$ (which is in $L_{n}$) for each $1 \leqslant i \leqslant d$. For any $0 \leqslant i \leqslant n-2$, having defined $A(\sigma, w)$ for every vertex $w$ that lies in $L_{j}$ for all $i < j \leqslant n-1$, we define $A(\sigma, v)$, for every $v \in L_{i}$, as follows. Suppose $v$ has children $v_{1}, v_{2}, \ldots, v_{d}$. Then
\begin{align}
\Prob\left[A(\sigma, v) = b\big|A(\sigma, v_{1}) = a_{1}, A(\sigma, v_{2}) = a_{2}, \ldots, A(\sigma, v_{d}) = a_{d}\right] = \vartheta_{A}((a_{1}, a_{2}, \ldots, a_{d}), b),
\end{align}
for all $b, a_{1}, a_{2}, \ldots, a_{d} \in \mathcal{S}$. Given a (deterministic) boundary configuration $\sigma$ on $L_{n}$ for any $n \in \mathbb{N}$, and $u$ a vertex in $L_{i}$ for any $0 \leqslant i \leqslant n-1$, we let $\mu_{A}(\sigma, u)$ denote the law of the random variable $A(\sigma, u)$. When $u$ equals the root $\phi$ of $T_{d}$, we abbreviate $\mu_{A}(\sigma, \phi)$ as simply $\mu_{A}(\sigma)$.

The notations introduced in the previous paragraph can be extended as follows when, instead of considering a deterministic boundary configuration $\sigma$ on $L_{n}$, we consider a \emph{random} boundary configuration $\pmb{\sigma}$ on $L_{n}$, with law $\nu$ supported on $\mathcal{S}^{d^{n}}$. For any $0 \leqslant i \leqslant n-1$ and any vertex $u$ that lies in $L_{i}$, we let $A(\pmb{\sigma}, u)$ denote the (random) state of $u$ obtained by starting from $\pmb{\sigma}$ on $L_{n}$ and iteratively applying $A$ up the generations of $T_{d}^{n}$. We let $\mu_{A}(\nu, u)$ denote the law of $A(\pmb{\sigma}, u)$. As above, we abbreviate $\mu_{A}(\nu, \phi)$ as $\mu_{A}(\nu)$. 

Recall that, given a measurable space $(\Omega, \mathcal{F})$ and two probability distributions $\mu_{1}$ and $\mu_{2}$ defined on it, the \emph{total variation distance} between $\mu_{1}$ and $\mu_{2}$ is defined as
\begin{equation}
\left|\left|\mu_{1} - \mu_{2}\right|\right|_{\tv} = \sup_{S \in \mathcal{F}}\left|\mu_{1}(S) - \mu_{2}(S)\right|.\nonumber
\end{equation}
When $\Omega$ is countable and $\mathcal{F}$ is its power set, the total variation metric is related to the $L^{1}$ metric via the following relation:
\begin{equation}
\left|\left|\mu_{1} - \mu_{2}\right|\right|_{\tv} = \frac{1}{2}\left|\left|\mu_{1} - \mu_{2}\right|\right|_{L^{1}} = \frac{1}{2}\sum_{\omega \in \Omega}\left|\mu_{1}(\{\omega\}) - \mu_{2}(\{\omega\})\right|.\nonumber
\end{equation}
Moreover, we have (see, for instance, Proposition 4.7 of \cite{levin2017markov})
\begin{equation}
\left|\left|\mu_{1} - \mu_{2}\right|\right|_{\tv} = \inf\left\{\Prob[X_{1} \neq X_{2}]: (X_{1}, X_{2}) \text{ is a coupling of } \mu_{1} \text{ and } \mu_{2}\right\},\label{tv_coupling_relation}
\end{equation}
where by a coupling $(X_{1}, X_{2})$ of $\mu_{1}$ and $\mu_{2}$ we mean a pair of random variables, $X_{1}$ and $X_{2}$, defined on the same probability space (with a specified joint distribution), such that $X_{1}$ follows the law $\mu_{1}$ and $X_{2}$ follows the law $\mu_{2}$.

\begin{defn}\label{defn:ergodicity}
We call a PTA $A$ \emph{ergodic} if 
\begin{equation}
\limsup_{n \rightarrow \infty}\sup_{\sigma, \tau: L_{n} \rightarrow \mathcal{S}}\left|\left|\mu_{A}(\sigma) - \mu_{A}(\tau)\right|\right|_{\tv} = 0,\label{corr_decay}
\end{equation}
where $\mu_{A}(\sigma)$ and $\mu_{A}(\tau)$, as defined above, indicate the laws of the random variables $A(\sigma,\phi)$ and $A(\tau,\phi)$ respectively.
\end{defn}
A definition this instrumental deserves some discussions about what truly its implications are. Consider \emph{any} two boundary configurations $\sigma$ and $\tau$ on $L_{n}$. What the probability distributions $\mu_{A}(\sigma)$ and $\mu_{A}(\tau)$ of the random variables $A(\sigma, \phi)$ and $A(\tau, \phi)$ capture are the ``effects" of the boundary configurations $\sigma$ and $\tau$, respectively, on the state of the root $\phi$ of $T_{d}$, under iterative applications of the PTA $A$. Consequently, what the total variation distance $\left|\left|\mu_{A}(\sigma) - \mu_{A}(\tau)\right|\right|_{\tv}$ captures is the difference between these effects, i.e.\ how differently the probability distribution of the state of $\phi$ is impacted because of starting from the boundary configuration $\sigma$ on $L_{n}$ as opposed to starting from the boundary configuration $\tau$ on $L_{n}$. When \eqref{corr_decay} holds, it is an indication that the effect \emph{any} boundary configuration assigned to $L_{n}$ has on the state of $\phi$ \emph{dwindles} or \emph{decays} as $n$ approaches $\infty$. It is as if, when $n$ is sufficiently large, the PTA $A$, viewed as a stochastic operator that acts on the states of the vertices of $L_{i}$ for all $1 \leqslant i \leqslant n$, tends to nearly \emph{forget} the ``initial" boundary configuration on $L_{n}$.

\subsection{The PTA defined via the game's recurrence relations}\label{subsec:game_PTA}
Recall the sets $W$, $L$ and $D$ introduced in \S\ref{sec:bond_GW}, and the recurrence relations described in \S\ref{subsec:recurrence_bond}. It is evident, applying the same argument as adopted in \S\ref{subsec:recurrence_bond}, that for any vertex $v \in T_{d}$ with children named $v_{1}, \ldots, v_{d}$, we have $v \in W$ if and only if either the edge $(v,v_{k})$ is a target for some $1 \leqslant k \leqslant d$, or there exists some $1 \leqslant \ell \leqslant d$ such that $v_{\ell} \in L$ and the edge $(v,v_{\ell})$ is safe. Likewise, $v \in D$ if and only if no $(v,v_{\ell})$ is a target, $(v,v_{\ell})$ is a trap for every $v_{\ell}$ that belongs to $L$, and there exists at least one $v_{k} \in D$ with $(v,v_{k})$ safe. In all other cases, $v \in L$. 

The recurrence relations mentioned above can be summarized in the form of a PTA $B_{p,q}$, as follows. The associated alphabet is $\mathcal{S} = \{W, L, D\}$, and the stochastic update rule is captured by the $3^{d} \times 3$-dimensional stochastic matrix $\vartheta_{B_{p,q}}$ whose elements are given by
\begin{equation}\label{B_{p}}
\vartheta_{B_{p,q}}((a_{1}, a_{2}, \ldots, a_{d}), b) = 
  \begin{cases} 
   1 - p^{i}(1-q)^{d-i} & \text{when } b = W, \\
   p^{i}(1-q)^{d-i} - p^{i+j}(1-q)^{d-i-j} & \text{when } b = D,\\
   p^{i+j}(1-q)^{d-i-j} & \text{when } b = L,
  \end{cases}
\end{equation}
for all $a_{1}, a_{2}, \ldots, a_{d} \in \mathcal{S}$ such that $i = \left|\left\{1 \leqslant \ell \leqslant d: a_{\ell} = L\right\}\right|$ and $j = \left|\left\{1 \leqslant \ell \leqslant d: a_{\ell} = D\right\}\right|$.

\subsection{How the probability of draw ties in with the ergodicity of this PTA}\label{subsec:draw_ergodicity}
\begin{theorem}\label{thm:draw_ergodicity_bond}
The probability of the event that the bond percolation game with parameter-pair $(p,q)$ results in a draw is $0$ if and only if $B_{p,q}$ is ergodic.
\end{theorem}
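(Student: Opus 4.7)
The plan is to base both directions of the equivalence on a single coupling argument. I would fix one realisation of the i.i.d.\ edge-labels on $T_{d}$ and simultaneously run $B_{p,q}$ from every possible boundary configuration $\sigma$ on the $n$-th generation $L_{n}$ using these common edge labels, writing $X_{v}^{\sigma}$ for the resulting state at a vertex $v$. The central step is the following lemma, to be proved by induction on $m$: for every vertex $v$ at depth $k$, every $0 \leqslant m \leqslant n-k$, and every boundary $\sigma$, if $v$ belongs to the game-set $W_{m}$ (as defined in \S\ref{sec:bond_GW}, applied to $T_{d}$) then $X_{v}^{\sigma}=W$, and if $v$ belongs to $L_{m}$ then $X_{v}^{\sigma}=L$. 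The $W$-step is straightforward from the characterisation of $W_{m}$ in \S\ref{subsec:recurrence_bond}: either a target edge at $v$ immediately triggers the PTA's $W$-rule, or a safe edge at $v$ leads to a child in $L_{m-1}$ whose PTA state is $L$ by the induction hypothesis. The $L$-step is the more delicate case: the characterisation of $L_{m}$ in \S\ref{subsec:recurrence_bond} forces no target edge at $v$, and forces every safe (i.e.\ non-trap) edge out of $v$ to terminate at a $W_{m-1}$-child, which by induction carries PTA-state $W$; one verifies that this is exactly the edge pattern for which the entry \eqref{B_{p}} of $\vartheta_{B_{p,q}}$ is non-zero in the $L$-column.

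Assuming the lemma, applying it at $v = \phi$ gives $\{X_{\phi}^{\sigma} \neq X_{\phi}^{\tau}\} \subseteq \{\phi \in D_{n}\}$ for every pair of boundaries $\sigma,\tau$. Combined with the coupling characterisation \eqref{tv_coupling_relation} of total variation distance, this yields
\begin{equation*}
\sup_{\sigma,\tau}\bigl\|\mu_{B_{p,q}}(\sigma)-\mu_{B_{p,q}}(\tau)\bigr\|_{\tv} \;\leqslant\; \Prob[\phi \in D_{n}] \;=\; 1 - w_{n} - \ell_{n},
\end{equation*}
where $w_{n}$ and $\ell_{n}$ are the $n$-round win and loss probabilities of the game on $T_{d}$. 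By \eqref{w_{n}_ell_{n}_limits} the right-hand side tends to the probability of draw, so if that probability is $0$ the supremum vanishes in the limit, and $B_{p,q}$ is ergodic.

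For the converse, I would exhibit a pair of extremal boundary configurations that forces the TV-gap to be at least $1-w_{n}-\ell_{n}$, thereby transferring any positive draw probability directly into a positive asymptotic TV-gap. Under the ``all $D$'' boundary $\sigma_{0}$, a direct identification of the PTA iteration with the game recurrences \eqref{GW_recurrence_1} and \eqref{GW_recurrence_2} (taken with $G(x)=x^{d}$) shows that $\Prob[X_{\phi}^{\sigma_{0}}=D] = 1-w_{n}-\ell_{n}$. Under the ``all $W$'' boundary $\sigma_{1}$, the entry of \eqref{B_{p}} producing state $D$ is identically zero whenever no child is in $D$, so a downward induction on depth shows that state $D$ never arises; hence $\Prob[X_{\phi}^{\sigma_{1}}=D] = 0$. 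Projecting TV onto the singleton $\{D\}$ then gives
\begin{equation*}
\sup_{\sigma,\tau}\bigl\|\mu_{B_{p,q}}(\sigma)-\mu_{B_{p,q}}(\tau)\bigr\|_{\tv} \;\geqslant\; \bigl|\Prob[X_{\phi}^{\sigma_{0}}=D] - \Prob[X_{\phi}^{\sigma_{1}}=D]\bigr| = 1-w_{n}-\ell_{n},
\end{equation*}
so ergodicity of $B_{p,q}$ forces the probability of draw to be $0$.

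The principal obstacle I anticipate is the careful bookkeeping in the $L$-step of the coupling lemma, where the three-part description of $L_{m}$ in \S\ref{subsec:recurrence_bond} must be matched precisely against the edge-label pattern singled out by \eqref{B_{p}} as yielding state $L$; in particular one must check that trap edges from $v$ to $W_{m-1}$-children (permitted but not required by the game's condition) are automatically compatible with the PTA's $L$-rule. Everything else -- the match between the $\sigma_{0}$-dynamics and the game's recursions, the inductive absence of state $D$ under $\sigma_{1}$, and the passage to the limit via \eqref{w_{n}_ell_{n}_limits} -- is essentially bookkeeping once the lemma is in hand.
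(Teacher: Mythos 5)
Your proposal is correct, and its first half is essentially the paper's own argument: the paper also couples all boundary configurations through the single shared realisation $\pmb{\omega}$ of edge labels and observes that once the game starting at $\phi$ is decided within $n$ rounds (the paper phrases this via an a.s.\ finite round count $N$ rather than via your explicit induction on the sets $W_{m}$ and $L_{m}$, but the content is identical), the root's PTA state is independent of the boundary, giving $\Prob[B_{p,q}(\sigma,\phi)\neq B_{p,q}(\tau,\phi)]\leqslant 2\Prob[N>n]\rightarrow 0$. Where you genuinely diverge is in the converse. The paper feeds in the \emph{random} boundary configuration $\nu^{(n)}$ of i.i.d.\ states with the stationary law $\mu$ (so that $\mu_{B_{p,q}}(\nu^{(n)})=\mu$ exactly) and compares it with the all-$L$ boundary $\delta_{L}^{n}$, under which the $D$-mass at the root vanishes; you instead compare two \emph{deterministic} extremal boundaries, all-$D$ versus all-$W$, using that the all-$D$ start reproduces the truncated game sets exactly so that $\mu_{B_{p,q}}(\sigma_{0})[D]=1-w_{n}-\ell_{n}$, while the all-$W$ start can never create state $D$ since the $D$-entry of \eqref{B_{p}} vanishes when $j=0$. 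Your variant has two small advantages: it stays strictly within the class of deterministic boundary configurations over which \eqref{corr_decay} actually takes its supremum (the paper's use of $\nu^{(n)}$ implicitly needs convexity of the total variation distance to reduce a random boundary to deterministic ones), and, combined with your upper bound, it yields the clean identity $\sup_{\sigma,\tau}\left|\left|\mu_{B_{p,q}}(\sigma)-\mu_{B_{p,q}}(\tau)\right|\right|_{\tv}=\Prob[\phi\in D_{n}]$, which the paper does not state. The one point you flag as delicate --- matching the game's description of $L_{m}$ against the $L$-column of \eqref{B_{p}} --- does go through: a vertex is in $L_{m}$ precisely when it has no target edge and every safe edge leads to a $W_{m-1}$-child, and the induction hypothesis pins down the PTA states only of $W_{m-1}$- and $L_{m-1}$-children, but that suffices because the coupled $L$-rule only inspects the states of children reached by safe edges, and trap edges to children of arbitrary (hence possibly uncontrolled) PTA state are harmless.
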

\begin{proof}
We let $\pmb{\omega}$ denote the random assignment of trap / target / safe labels to the edges of $T_{d}$. Given a vertex $u$ of $T_{d}$, we let $\pmb{\omega}(u)$ denote the restriction of $\pmb{\omega}$ to the edges of $T_{d}(u)$, where $T_{d}(u)$ denotes the sub-tree of $T_{d}$ induced on the set of vertices comprising $u$ and all its descendants. Let $X(u)$ denote the (random) state (in $\mathcal{S} = \{W,L,D\}$) of $u$ obtained as a result of the bond percolation game played using the random assignment $\pmb{\omega}$ and with the token placed at $u$ at the start of the game. We abbreviate $X(\phi)$ by $X$, and let $\mu$ denote the law of $X$. Note, crucially, that $X(u)$ is actually a function of $\pmb{\omega}(u)$, and its law is also $\mu$.

From above, it is evident that if we assign i.i.d.\ $\mu$ random states to the vertices of $L_{n}$, then the law of the random state of $\phi$ obtained via iterative applications of $B_{p,q}$ is again $\mu$. Formally, letting $\nu^{(n)}$ denote the probability distribution
\begin{equation}
\nu^{(n)}\left[\left(a_{1}, a_{2}, \ldots, a_{d^{n}}\right)\right] = \prod_{i=1}^{d^{n}}\mu[a_{i}], \text{ for all } a_{1}, a_{2}, \ldots, a_{d^{n}} \in \mathcal{S},\nonumber
\end{equation}
we conclude that the law $\mu_{B_{p,q}}(\nu^{(n)})$ is the same as $\mu$. 

Let us denote by $\delta_{L}^{n}$ the boundary configuration on $L_{n}$ that assigns the state $L$ to each vertex of $L_{n}$. From \eqref{B_{p}}, it is evident that $\mu_{B_{p,q}}\left(\delta_{L}^{n}\right)[D] = 0$ for every $n \in \mathbb{N}$ (in fact, this assertion is true for \emph{any} boundary configuration on $L_{n}$ that assigns the state $D$ to \emph{none} of the vertices in $L_{n}$). This fact, along with \eqref{corr_decay}, implies that when $B_{p,q}$ is ergodic,
\begin{align}
\limsup_{n \rightarrow \infty}\mu_{B_{p,q}}(\nu^{(n)})[D] = \limsup_{n \rightarrow \infty}\left|\mu_{B_{p,q}}(\nu^{(n)})[D] - \mu_{B_{p,q}}\left(\delta_{L}^{n}\right)[D]\right| \leqslant \limsup_{n \rightarrow \infty}\left|\left|\mu_{B_{p,q}}(\nu^{(n)}) - \mu_{B_{p,q}}\left(\delta_{L}^{n}\right)\right|\right|_{\tv} = 0,\nonumber
\end{align}
so that $\mu[D] = \lim_{n \rightarrow \infty}\mu_{B_{p,q}}(\nu^{(n)})[D] = 0$ when $B_{p,q}$ is ergodic. This shows that when $B_{p,q}$ is ergodic, the probability that the root $\phi$ has state $D$ under $\pmb{\omega}$ is $0$, so that the bond percolation game starting with $\phi$ as the initial vertex \emph{almost never} results in a draw.

We now assume that the bond percolation game beginning at $\phi$ has probability $0$ of resulting in a draw. Let $\Omega$ denote the set of all realizations of $\pmb{\omega}$ for which the game starting at $\phi$ does not lead to a draw. By our hypothesis, the set $\{\text{trap}, \text{target}, \text{safe}\}^{E(T_{d})} \setminus \Omega$, where $E(T_{d})$ denotes the set of all edges of $T_{d}$, is of measure $0$. Furthermore, our hypothesis guarantees the existence of a random variable $N$, taking values in $\mathbb{N}$ and \emph{finite almost surely}, such that the game terminates in less than $N$ rounds. In fact, for each realization $\omega$ of $\pmb{\omega}$ that belongs to $\Omega$, the corresponding value $N(\omega)$ of $N$ is finite. 

Let $u$ be a vertex in $T_{d}$, with children $u_{1}, \ldots, u_{d}$, and let $\omega(u,u_{i})$, for $1 \leqslant i \leqslant d$, denote the deterministic trap / target / safe labels assigned to the edges $(u,u_{i})$, for $1 \leqslant i \leqslant d$, under $\omega$. We also fix a deterministic assignment of states $\sigma: \{u_{1}, \ldots, u_{d}\} \rightarrow \mathcal{S}$. The state of $u$ is then decided as follows:
\begin{enumerate}
\item \label{i} If there exists some $1 \leqslant i \leqslant d$ with $\omega(u,u_{i})$ safe and $\sigma(u_{i}) = L$, then $u$ is in $W$.
\item \label{ii} If the above does not happen, but there exists some $1 \leqslant j \leqslant d$ with $\omega(u,u_{j})$ safe and $\sigma(u_{j}) = D$, then $u$ is in $D$.
\item \label{iii} In all other cases, $u$ is in $L$.
\end{enumerate}

Fix \emph{any} $\omega$ in $\Omega$ and \emph{any} deterministic boundary configuration $\sigma: L_{N(\omega)} \rightarrow \mathcal{S}$ on $L_{N(\omega)}$. Starting from $\sigma$, we use $\omega$ and the rules described above to decide the state $B_{p,q}(\sigma,u)(\omega)$ (recall this notation from \S\ref{subsec:tree_automata}) of the root $\phi$ -- however, since the fate of the game that begins from $\phi$ is decided in \emph{less} than $N(\omega)$ rounds, $\sigma$ has no effect whatsoever on this state. Using the notation introduced at the beginning of this proof, we conclude that $B_{p,q}(\sigma,u)(\omega) = X(\omega)$ (where $X(\omega)$ is dictated by $\omega$ alone), irrespective of our choice of $\sigma$. Letting $S_{n}$, for every $n \in \mathbb{N}$, denote the set of all $\omega \in \Omega$ for which $N(\omega) \leqslant n$, we have
\begin{align}
&B_{p,q}(\sigma,\phi)(\omega) = X(\omega) \text{ for every boundary configuration } \sigma \text{ on } L_{n}, \text{ for each } \omega \in S_{n}\nonumber\\
&\implies B_{p,q}(\sigma,\phi)\mathbf{1}_{S_{n}} = X \mathbf{1}_{S_{n}} \text{ for every boundary configuration } \sigma \text{ on } L_{n}\nonumber\\
&\implies \Prob\left[B_{p,q}(\sigma,\phi) \neq X\right] \leqslant 1-\Prob[S_{n}] = \Prob[N > n], \text{ for every boundary configuration } \sigma \text{ on } L_{n}.
\end{align}
Consequently, for \emph{any} two boundary configurations $\sigma$ and $\tau$ on $L_{n}$, we have
\begin{align}
& \Prob\left[B_{p,q}(\sigma,\phi) \neq B_{p,q}(\tau,\phi)\right] \leqslant \Prob\left[B_{p,q}(\sigma,\phi) \neq X\right] + \Prob\left[B_{p,q}(\tau,\phi) \neq X\right] \leqslant 2\Prob[N > n].\label{sigma_tau_unequal_bound}
\end{align}
From \eqref{sigma_tau_unequal_bound} and the fact that $N$ is finite almost surely, we conclude that
\begin{align}
& \limsup_{n \rightarrow \infty}\sup_{\sigma, \tau: L_{n} \rightarrow \mathcal{S}}\Prob\left[B_{p,q}(\sigma,\phi) \neq B_{p,q}(\tau,\phi)\right] \leqslant \limsup_{n \rightarrow \infty} 2\Prob[N > n] = 0,\label{sup_sigma_tau_unequal_bound}
\end{align}
so that from \eqref{sup_sigma_tau_unequal_bound}, \eqref{tv_coupling_relation} and \eqref{corr_decay}, we conclude that $B_{p,q}$ is indeed ergodic, as desired.
\end{proof}

Theorem~\ref{thm:draw_ergodicity_bond}, along with Remark~\ref{rem:regular}, tells us that the PTA $B_{p,q}$ is ergodic if and only if $(1-p-q) (1-q)^{d-1} \leqslant (d+1)^{d-1} d^{-d}$.

\section{Condition that guarantees that the expected duration of the bond percolation game is finite}\label{sec:finite_expected_duration}
While the probability of draw being $0$ implies that the duration of the bond percolation game (played on $T_{\chi}$, for \emph{any} offspring distribution $\chi$), henceforth denoted by the random variable $\mathcal{T}$, is finite almost surely, it does not guarantee that its expected value, i.e.\ $\E[\mathcal{T}]$, is finite as well. Here, we provide a sufficient condition for $\E[\mathcal{T}]$ to be finite.

To begin with, we assume that we are in a regime where the probability of draw is $0$. From parts \eqref{main_GW_bond_1} and \eqref{main_GW_bond_3} of Theorem~\ref{thm:main_GW_bond}, this means that $w' = p + (1-p-q)w$ is the unique fixed point of the function $g_{p,q}^{(2)}$ in $[0,1]$. Recall, from \S\ref{sec:bond_GW}, the sets $W_{n}$, $L_{n}$ and $D_{n}$, the probabilities $w_{n}$ and $\ell_{n}$, the transformed variables $w'_{n} = p + (1-p-q)w_{n}$ and $\ell'_{n} = (1-p-q)\ell_{n}$, and the recurrence relations described in \S\ref{subsec:recurrence_bond}. We can write
\begin{align}
\E[\mathcal{T}] &= \sum_{n=0}^{\infty}\Prob[\mathcal{T} \geqslant n+1] = \sum_{n=0}^{\infty}\Prob[\phi \in D_{n}]\nonumber\\
&= \sum_{n=0}^{\infty}\left\{1 - \Prob[\phi \in W_{n}] - \Prob[\phi \in L_{n}]\right\} = \sum_{n=0}^{\infty}\left\{1 - w_{n} - \ell_{n}\right\}\nonumber\\
&= 1 + \frac{1}{1-p-q}\sum_{n=1}^{\infty}\left\{1-p-q - (1-p-q)w_{n} - (1-p-q)\ell_{n}\right\} \nonumber\\
&= 1 + \frac{1}{1-p-q}\sum_{n=1}^{\infty}\left\{1-q - w'_{n} - \ell'_{n}\right\} = 1 + \frac{1}{1-p-q}\sum_{n=1}^{\infty}\left\{w' + \ell' - w'_{n} - \ell'_{n}\right\}\nonumber\\
&= 1 + \frac{1}{1-p-q}\sum_{n=1}^{\infty}\left\{w'-w'_{n}\right\} + \frac{1}{1-p-q}\sum_{n=1}^{\infty}\left\{\ell'-\ell'_{n}\right\}.\label{avg_dur_0}
\end{align}
Using the recurrence relation \eqref{GW_recurrence_3}, and the mean value theorem, we have
\begin{align}
w'-w'_{n+2} &= g_{p,q}^{(2)}(w') - g_{p,q}^{(2)}\left(w'_{n}\right) = \frac{d}{dx}g_{p,q}^{(2)}(x)\Big|_{x=\xi_{n}}\left(w'-w'_{n}\right),\label{avg_dur_1}
\end{align}
where $\xi_{n}$ is some quantity with $w'_{n} < \xi_{n} < w'$. Likewise, using \eqref{GW_recurrence_2}, the definition of $g_{p,q}$ from \eqref{GW_g_defn}, and part \eqref{main_GW_bond_2} of Theorem~\ref{thm:main_GW_bond}, we have
\begin{align}
\ell'-\ell'_{n+2} &= g_{p,q}(w'_{n+1}) - g_{p,q}(w') = g'_{p,q}\left(\zeta_{n+1}\right)\left(w'_{n+1}-w'\right) = \left|g'_{p,q}\left(\zeta_{n+1}\right)\right|\left(w' - w'_{n+1}\right),\label{avg_dur_2}
\end{align}
for some quantity $\zeta_{n+1}$, with $w'_{n+1} < \zeta_{n+1} < w'$, whose existence is guaranteed by the mean value theorem. Note that, under the assumption that the probability of draw is $0$, we have $w' = \alpha_{p,q}$, where $\alpha_{p,q}$ is the unique fixed point of $g_{p,q}$ in $[0,1]$.

In \S\ref{subsec:recurrence_bond}, we showed that $w'$ is the smallest positive fixed point of $g_{p,q}^{(2)}$, and in \eqref{g_{p,q}^{(2)}(0)_lower_bound_1} and \eqref{g_{p,q}^{(2)}(0)_lower_bound_2}, we showed that the curve $y=g_{p,q}^{(2)}(x)$ lies above the line $y=x$ at $x=0$. Consequently, the curve $y=g_{p,q}^{(2)}(x)$ must travel from \emph{above} the line $y=x$ to \emph{beneath} the line $y=x$ at $x=w'$. Therefore, the slope of $y=g_{p,q}^{(2)}(x)$ must be bounded above by the slope of $y=x$, which is $1$, at $x=w'$. In other words, $\frac{d}{dx}g_{p,q}^{(2)}(x)\Big|_{x=w'} \leqslant 1$.

We now assume that
\begin{equation}\label{assumption_finite_expectation_duration}
\frac{d}{dx}g_{p,q}^{(2)}(x)\Big|_{x=w'} < 1 \text{ and } \left|\frac{d}{dx}g_{p,q}(x)\Big|_{x=w'}\right| < 1.
\end{equation}
Let $\frac{d}{dx}g_{p,q}^{(2)}(x)\Big|_{x=w'} = \alpha$ and $\left|\frac{d}{dx}g_{p,q}(x)\Big|_{x=w'}\right| = \beta$, so that under the assumption made in \eqref{assumption_finite_expectation_duration}, $\alpha < 1$ and $\beta < 1$. Let us choose and fix $0 < \epsilon < \min\{1-\alpha, 1-\beta\}$. From \eqref{w_{n}_ell_{n}_limits}, we know that $w'_{n} \uparrow w'$ as $n \rightarrow \infty$. By the continuity of both the functions $\frac{d}{dx}g_{p,q}^{(2)}(x)$ and $\left|\frac{d}{dx}g_{p,q}(x)\right|$ in $x$, we know that there exists some $N \in \mathbb{N}$ such that 
\begin{equation}
\frac{d}{dx}g_{p,q}^{(2)}(x) < \alpha+\epsilon \text{ and } \left|\frac{d}{dx}g_{p,q}(x)\right| < \beta+\epsilon \text{ for all } w'_{n} \leqslant x \leqslant w', \text{ for all } n \geqslant N.\nonumber
\end{equation} 
This, in turn, implies that
\begin{equation}\label{avg_dur_3}
\frac{d}{dx}g_{p,q}^{(2)}(x)\Big|_{x=\xi_{n}} < \alpha+\epsilon < 1 \text{ and } \left|g'_{p,q}\left(\zeta_{n+1}\right)\right| < \beta +\epsilon < 1 \text{ for all } n \geqslant N.
\end{equation}

From \eqref{avg_dur_1}, \eqref{avg_dur_2} and \eqref{avg_dur_3}, we see that the terms corresponding to $n \geqslant N$ in the infinite series of \eqref{avg_dur_0} decay exponentially fast, thereby allowing the series to converge. This completes the proof of the fact that under \eqref{assumption_finite_expectation_duration}, the expected duration of the bond percolation game is finite. This also completes the proof of Theorem~\ref{thm:avg_dur}.

\bibliography{Tree_automata_paper_bib}
\end{document}